\newcommand{\Syl}{\mathrm{Syl}}
\newcommand{\GL}{\mathrm{GL}}
\theoremstyle{definition}
\newtheorem{Def}{Definition}[section]
\newtheorem{Prop}[Def]{Proposition}
\newtheorem{Case}[Def]{Case}
\newtheorem{Lem}[Def]{Lemma}
\newtheorem{Thm}[Def]{Theorem}
\newtheorem{Thm-Def}[Def]{Theorem-Definition}
\newtheorem{Kor}[Def]{Corollary}
\title{The Chow Ring of the 2-Sylow Subgroup of \(\GL(4,2)\)}
\author{Alexander Ziegler}
\address{Alexander Ziegler, Fachgruppe Mathematik und Informatik, Bergische Universit\"at Wuppertal, Gaussstrasse 20, Wuppertal, 42119, Germany}
\email{ziegler@uni-wuppertal.de}
\begin{document}

\maketitle

\begin{abstract}
This paper provides a computation of the mod 2 Chow ring of the motivic étale classifying space
of the finite group \(\Syl_2(\GL(4,2))\). It outlines a general computation strategy, adapted from work by Burt Totaro, that has been largely automated by the author. This strategy can be used to compute more examples
of Chow rings of motivic étale classifying spaces quickly.
\end{abstract}

\section{Introduction}\label{sec:intro}
In \cite{toold} Totaro assigns to every linear algebraic group \(G\) over a base field \(k\)
an ind-scheme \((U_i/G)_{i\in\mathbb{N}}=:BG\) over \(k\), the motivic étale classifying space of \(G\).
He then defines an associated
Chow ring \(CH^*(BG)\) by taking the Chow ring of each \(U_i/G\) and showing that for a fixed \(j\) the graded pieces \(CH^j(U_i/G)\) become constant for all \(i\) larger than \(j\). In this context we will interpret finite groups as linear algebraic groups \(G\) over \(\mathbb{C}\). The authors of \cite[Section 4]{MorVo} independently came up with another definition of the motivic étale classifying space which is equivalent in their motivic homotopy
category.

In \cite{To14} Totaro provides powerful computational tools to determine \(CH^*(BG)/p\) for \(G\)
a finite group and \(p\) a prime number, among these are the following.
Firstly \(CH^*(BG)/p\) has non-positive regularity which implies an upper bound for the degrees
of generators and relations in a presentation of \(CH^*(BG)/p\), see \thref{thm:reg}.
Secondly \thref{thm:detect} tells us that algebraic cycles are detected on elementary abelian subgroups up to products of cycles of low degree. 
These theorems reduce the computation of \(CH^*(BG)/2\) where \(G\) is finite and has a faithful representation
of degree 2 to calculations in degree 1.
Using this fact Totaro provides in  \cite[Section 13]{To14} computations of \(CH^*(BG)/2\) for all 14 groups of order \(16\), all of which have faithful representations of degree 2.

The main purpose of this paper is to show how to adapt his methods to the case where \(G\) is a more complicated finite group. To this end we have computed  \(CH^*(B\Syl_2(\GL(4,2)))/2\) as our main result. We chose this group not only because it is of order \(64\) but more importantly because it has irreducible faithful representations of order \(4\). Our strategy can be applied more generally to compute integral Chow groups of \(p\)-groups up to degree \(p+1\), but success is dependent on algebraic cycles being detected on some proper subgroup. From our experience this strategy seems to usually work for small \(p\)-groups with \(p=2,3\). In \cite{GGCR} the author provides \cite{sagemath}-functions 
utilizing \cite{GAP4} that automate large parts of this computation strategy. We should mention here that Béatrice Chetard and Pierre Guillot developed similar unpublished \cite{sagemath}-functions.

A possible application of this work is the systematic search for pathologies in (possibly mod \(p\)) Chow groups of quasi-projective 
varieties. It is generally difficult to determine Chow groups of a given quasi-projective variety, which hinders  computations of large classes of examples.
Given a finite group \(G\) we can cut off the approximation of \(BG\) by Totaros ind-scheme \(U_i/G\) at some index \(i\)
and obtain a quasi projective variety. By definition, the Chow groups of \(U_i/G\) coincide with those of \(BG\)
up to degree \(i\). One may then compute large classes of examples for Chow groups by utilizing \cite{GGCR}
and mimicking our restriction arguments.

Our approach for computing \(CH^*(B\Syl_2(\GL(4,2)))/ 2\) is to determine the complex representation rings \(R(H)\) for increasingly bigger subgroups \(H\leq \Syl_2(\GL(4,2))\) including their \(\lambda\)-structures.
In the appendix we give computations of \(R(H)\) for \(H\leq \Syl_p(\GL(4,p))\) and arbitrary \(p\),
 but in the special case \(p=2\) one can also use \cite{GAP4} to determine \(R(H)\).
From this we specialize to \(p=2\) and calculate \(gr^j_\gamma R(H)\) with \(j\) small, low degree graded pieces of the associated graded ring to
the so-called \(\gamma\)-filtration on \(R(H)\). It turns out that there exists a collection of (possibly previously considered) subgroups 
\(H_i\leq H\) for which we already know 
\[gr^j_\gamma R(H_i)=gr^j_{\mathrm{geom}} R(H_i)\]
for small \(j\) and we check that the combined restriction map
\[gr^j_\gamma R(H)\to\prod_i gr^j_\gamma R(H_i)\]
is injective.
From this we get that
\[gr^j_\gamma R(H)=gr^j_{\mathrm{geom}} R(H)\]
and deduce \(CH^j(BH)\cong gr^j_{\mathrm{geom}} R(H)\) via the motivic Atiyah-Hirzebruch spectral sequence. These computation steps can be performed for any finite group \(H\) by \cite{sagemath} via the functions provided in \cite{GGCR}, but the restriction arguments might not always successful, for example if the restriction
\[gr^j_\gamma R(H)\to\prod_{H_i< H} gr^j_\gamma R(H_i)\]
is not injective. As previously mentioned, this seems to rarely be the case for small groups.

For the following arguments we need that \(H\) has a faithful representation of degree \(4\). We use the previously mentioned detection theorem \thref{thm:detect}
to ensure that the cycle class map is injective by restricting to elementary subgroups, where it is injective.
We obtain the remaining relations as the preimage of the relations in \(\mathbb{F}_2\)-group cohomology as computed in \cite{cohom}. Computing the image of the modulo 2 cycle class map on Chern classes is straightforward and we
determine the relevant maps in the appendix.
The non-positive regularity of the Chow ring as applied in \thref{thm:reg} ensures that
we do not have missed any generators. This yields a presentation of \(CH^*(BH)/2\) in terms of Chern classes of representations. More details and 
definitions of the mentioned constructions, are provided in the next four sections.

Mathias Wendt pointed out to us that a more conceptual computation of \(CH^*(B\Syl_2(\GL(4,2)))/2\)
might be possible by using the fact that \(\Syl_2(\GL(4,2))\) can be expressed as a wreath product
\(C_2\wr C_2^2\) as mentioned in \cite{webb}.

The structure of this text is the following.
We will use \autoref{sec:basics} to  introduce the notion of Chow rings of classifying spaces, group cohomology
and the cycle class map. In \autoref{sec:control} we introduce several powerful tools from \cite{To14}
to control Chow groups of classifying spaces in high degree. Next we introduce the \(\gamma\)- and the geometric filtration on a complex representation ring and relate their associated graded algebras to the
integral Chow ring via the Atiyah-Hirzebruch spectral sequence in \autoref{sec:filter}.
It is a bit cumbersome to deal with the \(\gamma\)-filtration by hand so in \autoref{sec:poly} we will introduce universal polynomials that will
streamline our calculations of the associated graded pieces to the \(\gamma\)-filtration. 
In \autoref{sec:chowh} we compute the integral Chow ring of \(H:=\Syl_2(\GL(3,2))\). 
Similarly we compute in \autoref{sec:chowl} the mod 2 Chow ring of a subgroup \(L\leq\Syl_2(\GL(4,2))\)
given as a centralizer of an elementary abelian subgroup of \(\Syl_2(\GL(4,2))\)
such that we can embed \(H\to L\) as a subgroup. Finally we conclude the text in \autoref{sec:chowg} by calculating the mod \(2\) Chow ring of \(\Syl_2(\GL(4,2))\) utilizing the previously computed Chow rings of \(H\) and \(L\).

There is an appendix consisting of supplementary content that is needed in our computations
of Chow rings and straightforward to compute. For parts of it we work a bit more general, 
considering \(p\)-groups for arbitrary prime numbers \(p\) instead of just 2-groups.
These computations could be useful in generalizing the result from the main text.
In \autoref{sec:reph} we give a description of the complex representation ring of 
\(H':=\Syl_p(\GL(3,p))\) including exterior powers. Building on this we describe the modulo 2 cycle class map for \(H'\) with \(p=2\) (i.e. just \(H\)) in \autoref{sec:cycleh}.
As its target we take the \(\mathbb{F}_2\)-group cohomology as described in \cite{cohom}.
Similarly in \autoref{sec:repl} we determine the representation ring including exterior powers of an index \(p\) subgroup \(L'\leq \Syl_p(\GL(4,p))\)
analogous to \(L\).
In \autoref{sec:cyclel} we give the modulo 2 cycle class map for \(L'\) with \(p=2\), i.e. for \(L\).
Then we compute in \autoref{sec:repg} the representation ring of
\(G':=\Syl_p(\GL(4,p))\) but restrict to \(p=2\) when computing its exterior powers.
Lastly we give in \autoref{sec:cycle} a description of the modulo \(2\) cycle class map for \(G'\)
with \(p=2\), i.e. for \(G\). 
\subsection*{Acknowledgments}
This work was carried out as part of a Ph.D project supervised by Matthias Wendt and Marcus Zibrowius.
I wish to thank them for their guidance. I am deeply indebted to Matthias Wendt who suggested this topic and provided key ideas. Furthermore I thank Dave Benson, Jesper Grodal, Bob Oliver and Britta Späth for helpful discussions.
\subsection*{Funding}
The author is funded by the DFG Research Training Group 2240: \emph{Algebro-Geometric Methods in
Algebra, Arithmetic and Topology}.

\section{Group cohomology and Chow rings of classifying spaces}\label{sec:basics}
This section will very briefly introduce group cohomology
and Chow rings of classifying spaces. For a more detailed account, we strongly recommend \cite{To14}, as it is the main source for this section. 
Suppose \(G\) is a topological group. To it we can associate a topological space \(BG\), called its classical classifying space,
by taking the quotient of a free action of \(G\) on a contractible space \(EG\). There is a clash of notations
with the motivic étale classifying space, but it will be always clear from context which classifying space we are referring to. For a precise definition, check \cite[Example 1B.7]{Hat}. This object is rife with interesting data about \(G\). Most notably:
\begin{Thm}[Steenrod]
    There exists a universal principal \(G\)-bundle \(\gamma_G\)
    on the classical classifying space \(BG\) such that for every paracompact Hausdorff
    space \(X\) the map
    \begin{align*}
        [X,BG]&\to \{\text{isomorphism classes of principal \(G\)-bundles on \(X\)}\}\\
        f&\mapsto f^*\gamma_G
    \end{align*}
    is a bijection. By \([X,BG]\) we denote the set of homotopy classes of maps \(X\to BG\).
\end{Thm}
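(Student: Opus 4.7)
The plan is to follow Milnor's infinite-join construction. I would take $EG := G \ast G \ast G \ast \cdots$, the colimit of the iterated joins $E_n G := G^{\ast(n+1)}$ equipped with the diagonal $G$-action. A point of $E_n G$ is represented by a formal sum $\sum_{i=0}^{n} t_i g_i$ with $t_i \geq 0$, $\sum t_i = 1$, modulo the relations collapsing coordinates with $t_i = 0$. Each $E_n G$ is $(n-1)$-connected by the standard join argument, so $EG$ is weakly contractible; the $G$-action is free, and the coordinate functions $t_i$ descend to $BG := EG/G$ and exhibit $\gamma_G : EG \to BG$ as a locally trivial principal $G$-bundle, trivialized over each open set $\{t_i > 0\}$.

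With this setup, well-definedness of $[f] \mapsto f^*\gamma_G$ reduces to the classical homotopy invariance of pullback for principal $G$-bundles over paracompact Hausdorff bases, which is proved by a clutching argument on $X \times [0,1]$. For surjectivity, given $P \to X$ I would construct a $G$-equivariant map $\tilde f : P \to EG$: by paracompactness choose a countable locally finite trivializing cover $\{U_i\}_{i \in \mathbb{N}}$ of $X$ with subordinate partition of unity $\{\rho_i\}$ and local sections $s_i : U_i \to P$; writing $p = s_i(\pi p) \cdot g_i(p)$ for the unique $g_i(p) \in G$ whenever $\pi p \in U_i$, set
\[\tilde f(p) := \sum_i \rho_i(\pi p)\, g_i(p) \in EG.\]
This is well-defined, continuous and $G$-equivariant, hence descends to $f : X \to BG$ with $f^*\gamma_G \cong P$.

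For injectivity, an isomorphism $f_0^*\gamma_G \cong f_1^*\gamma_G$ glues with the two pullback bundles to give a principal $G$-bundle $Q$ on $X \times [0,1]$ restricting to $f_j^*\gamma_G$ on $X \times \{j\}$. Running the surjectivity construction for $Q$ relative to the closed subspace $X \times \{0,1\}$, starting from the equivariant maps already available on the two endpoints, yields a classifying map $H : X \times [0,1] \to BG$ which is a homotopy from $f_0$ to $f_1$.

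The main obstacle is the partition-of-unity construction of $\tilde f$, and in particular running it relative to a closed subspace so that the injectivity argument goes through: this needs a suitable extension property for locally finite partitions of unity on paracompact Hausdorff spaces, and careful book-keeping so that the equivariant maps on $P|_{X \times \{0,1\}}$ fixed at the start really are the ones induced by $f_0$ and $f_1$.
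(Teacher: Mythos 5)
The paper does not prove this statement at all: it is quoted as a classical theorem of Steenrod, with a pointer to the literature (Hatcher's Example 1B.7 for the construction of \(BG\)), so there is no in-paper argument to compare against. Your Milnor-join sketch is the standard and correct route to the full result, and your identification of the surjectivity Gauss map \(\tilde f(p)=\sum_i\rho_i(\pi p)\,g_i(p)\) is exactly right. Two points deserve more care than your outline gives them. First, for the coordinate functions \(t_i\) and the local "last group coordinate" maps \(g_i\) to be continuous, the infinite join must carry Milnor's strong topology rather than the colimit topology, and the reduction to a \emph{countable} locally finite trivializing cover uses the standard trick of amalgamating the sets of a locally finite refinement indexed by finite subsets of the nerve; paracompactness alone does not hand you a countable cover. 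Second, your injectivity step is where the genuine subtlety lives, and as stated it is circular in spirit: the classifying map you produce for \(Q|_{X\times\{0\}}\cong f_0^*\gamma_G\) by the Gauss construction need not be \(f_0\) itself, and you cannot invoke uniqueness of classifying maps to fix this since that is what you are proving. The classical repair is the even/odd shift: the two self-maps of \(EG\) pushing the join coordinates into even and odd slots are \(G\)-homotopic to the identity through the linear homotopy in the join parameters, and this shows that \emph{any} two \(G\)-equivariant maps \(P\to EG\) are \(G\)-homotopic (first move them into disjoint blocks of coordinates, then interpolate). That single lemma yields both well-definedness of the classifying map up to homotopy and injectivity, and it is the ingredient your "relative partition of unity" book-keeping is implicitly trying to reconstruct. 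With that lemma inserted, your proof is complete and is the same proof one finds in Husemoller or Milnor's original paper.
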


Of particular interest is also:
\begin{Def}
    Let \(R\) be a ring.
     We call the singular cohomology ring \(H^*(BG,R)\) the group cohomology of \(G\) with \(R\)-coefficients.
\end{Def}
We can understand some of the classes in the group cohomology ring via complex representation theory.
Singular cohomology rings are the natural habitat of Chern classes of topological finite dimensional complex vector bundles. Now take a complex \(G\)-representation \(\phi\) of finite degree. Clearly \(\phi\) is a \(G\)-equivariant finite dimensional vector bundle over a point, which can be uniquely (up to isomorphism) extended to the contractible space \(EG\). By taking the quotient of the action of \(G\) on 
\(\phi\) we obtain a vector bundle over \(BG\). We can conclude, group cohomology is the natural habitat of Chern classes of finite degree complex representations.

Motivic homotopy theory gives us an analogous motivic étale classifying space
that is much closer to the algebro-geometric theory. It is a nice 
feature of this theory that the motivic étale classifying
space of a group can be expressed as a formal filtered colimit of schemes. Other topologies than the étale topology can be considered, but then we can't approximate the corresponding motivic classifying space by schemes. Using this approximation, one can define a notion of an associated Chow ring that can be expressed
in purely algebro-geometric terms. This Chow ring can be seen as the motivic analogue of group cohomology.
The Chow ring of an motivic étale classifying space was independently defined in \cite{toold} and \cite{MorVo}.
\begin{Thm}\thlabel{thm:defthm}
    Let G be an affine finite type group scheme over a base field \(k\) and \(i\) a natural number. There exists \(V\) a \(k\)-vector space of finite dimension with an action \(G\to \GL(V)\) i.e. a \(k\)-linear representation of \(G\) and \(S\leq V\) a closed \(G\)-invariant subscheme such that \(G\) acts freely on \(V-S\), \((V-S)/G\) exists as a scheme
    and \(S\) is of codimension greater than \(i\). Furthermore the graded ring 
    \(CH^*((V-S)/G)\) is independent of the choice of \(V,S\) up to degree \(i\).
\end{Thm}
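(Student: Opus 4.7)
The plan is to prove the two assertions (existence of a good pair $(V,S)$, and independence up to degree $i$) separately, following Totaro's original argument in \cite{toold}.

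For existence, I would first embed $G$ into some $\GL(n)$ over $k$; this is possible because $G$ is an affine finite type group scheme. Let $W$ denote the standard representation of $\GL(n)$, regarded as a $G$-representation by restriction; this is a faithful representation of $G$. The idea is then to take $V := W^{\oplus N}$ for a sufficiently large $N$, and let $S \subseteq V$ be the closed subscheme of points where $G$ does not act freely (i.e.\ where the orbit map is not a closed immersion with trivial stabilizer). The key quantitative input is that the codimension of $S$ grows linearly with $N$: if $Z \subseteq W$ is the non-free locus of the $G$-action on $W$, then for $V=W^{\oplus N}$ the non-free locus is contained in $Z^{\oplus N}$, which has codimension $(\dim W - \dim Z)\cdot N$ in $V$. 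Choosing $N$ large enough makes this codimension exceed $i$. Finally, the quotient $(V-S)/G$ exists as a quasi-projective scheme by standard GIT / Seshadri / descent arguments, because $G$ acts freely on the open subscheme $V-S$ and $G$ is affine.

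For independence, I would use the double fibration trick. Suppose $(V_1, S_1)$ and $(V_2, S_2)$ are two valid choices. Consider $V_1 \oplus V_2$ with the diagonal $G$-action and form the three open subsets
\[
U_{12} := (V_1 \oplus V_2) - (S_1 \oplus V_2 \cup V_1 \oplus S_2),\quad U_1:=(V_1-S_1)\oplus V_2,\quad U_2 := V_1 \oplus (V_2-S_2).
\]
Since $G$ acts freely on either $V_j - S_j$, it acts freely on $U_j$ and on $U_{12}$, so the quotients $U_j/G$ and $U_{12}/G$ exist as schemes. The projections $U_j/G \to (V_j - S_j)/G$ are vector bundles, hence induce isomorphisms on Chow groups in every degree by homotopy invariance. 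The open immersion $U_{12}/G \hookrightarrow U_j/G$ has closed complement of codimension greater than $i$ (the complement is the image in $U_j/G$ of $(V_j-S_j)\oplus S_{3-j}$, inheriting the codimension of $S_{3-j}$). By the localization exact sequence for Chow groups, the restriction map $CH^j(U_j/G) \to CH^j(U_{12}/G)$ is surjective in all degrees and is an isomorphism for $j \le i$. Composing these isomorphisms yields a canonical isomorphism $CH^j((V_1-S_1)/G) \cong CH^j((V_2-S_2)/G)$ in every degree $j \le i$.

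The step I expect to be the main obstacle is the codimension bookkeeping in the existence argument: one needs a faithful representation whose non-free locus has positive codimension, and one needs to verify that taking direct sums makes this codimension grow, rather than merely producing a non-empty free locus. A mild subtlety is that for general $G$ one should replace ``non-free locus'' by the closed locus where stabilizers are non-trivial, and check this is $G$-invariant and closed; after that the dimension count reduces to counting orbits in $W$ of non-trivial stabilizer type. The independence argument is comparatively routine once one invokes homotopy invariance and the localization sequence, both of which are standard features of the Chow functor on finite-type $k$-schemes.
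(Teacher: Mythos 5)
The paper does not actually prove this statement; its ``proof'' is a one-line citation to \cite[Theorem 2.5]{To14}, so your proposal is really a reconstruction of Totaro's argument. Your independence half is exactly the standard double fibration trick (vector bundle projections plus homotopy invariance, then the localization sequence killing classes in degrees below the codimension of the removed locus) and is correct as sketched.

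The genuine gap is in the codimension bookkeeping of your existence half. You bound the non-free locus of $V=W^{\oplus N}$ by $Z^{\times N}$, where $Z\subseteq W$ is the non-free locus of the $G$-action on the standard representation $W$ of the ambient $\GL(n)$, and conclude codimension $(\dim W-\dim Z)\cdot N$. This is vacuous whenever $Z=W$, and for positive-dimensional $G$ that is the typical situation: already for $G=\GL(2)$ or $G=\mathrm{SL}(2)$ acting on $W=k^2$, every vector (zero or not) has a positive-dimensional stabilizer, so $Z=W$ and your bound is $0$ for every $N$. Since the theorem is stated for arbitrary affine finite type group schemes, the argument as written produces no admissible $S$ at all in these cases. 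The correct bookkeeping, and the one Totaro uses, is to bound the non-free locus of the $G$-action by that of the ambient $\GL(n)$-action: on $W^{\oplus N}$, viewed as $n\times N$ matrices with $\GL(n)$ acting by left multiplication, $\GL(n)$ --- and hence every subgroup scheme $G\leq\GL(n)$, since stabilizers in $G$ are intersections with stabilizers in $\GL(n)$ --- acts freely off the locus of matrices of rank $<n$, which has codimension $N-n+1$, and this does grow with $N$. (Equivalently, use $\mathrm{Hom}(k^n,k^{N})$ and the open locus of injective maps.) For the finite groups to which the paper actually applies the theorem your bound happens to work, because faithfulness forces each fixed subspace $W^g$ with $g\neq 1$ to be proper, so $Z=\bigcup_{g\neq 1}W^g$ is a proper closed subset; but as a proof of the theorem as stated, the reduction to the $\GL(n)$-action is the missing idea.
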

\begin{proof}
    See \cite[Theorem 2.5]{To14} for a proof.
\end{proof}
\begin{Def}
    We define the Chow ring of the motivic étale classifying space \(BG\) to be given as
    \(CH^j(BG):=CH^j((V-S)/G)\)
    for \(j<i\) by varying \(i\) and according to \thref{thm:defthm} choosing \(V,S\). We obtain a graded ring \(CH^*(BG)\).
\end{Def}

We will use \(BG\) as both the notation for the classical and
the motivic étale classifying space. From now on they will only appear in the context of group cohomology or the Chow ring. Furthermore, we 
will interpret finite groups as discrete topological groups or complex affine group schemes, depending on the context.

Suppose for now \(G\) is a compact Lie group and \(R(G)\) its ring of virtual complex representations.
The extended degree map \(R(G)\to\mathbb{Z}\) has a kernel \(I\) called the augmentation ideal.
The Atiyah-Segal completion theorem states that \(K^0(BG)\) is naturally 
isomorphic to \(R(G)^\wedge\) which is \(R(G)\) completed at \(I\).
Suppose now that \(G\) is an affine finite type group scheme over a base field \(k\). 
In \cite[Theorem 3.1]{toold} a similar isomorphism \(R(G)_k^\wedge\to K_0(BG)\) is constructed
where \(K_0(BG)\) is just the colimit of the \(K_0((V-S)/G)\) for the approximation steps
\((V-S)/G\) approaching \(BG\) as in \thref{thm:defthm} and \(R(G)^\wedge\) is the ring of \(k\)-linear virtual representation of \(G\) completed at the analogous augmentation ideal \(I\). The \(n\)-th Chern class is a natural map \(K_0\to CH^*\) so finite degree \(k\)-linear representations have Chern classes in \(CH^*(BG)\). 

Throughout our computations we make heavy use of the powerful theory for these characteristic classes.

\begin{Thm}
    For cyclic groups \(C_n\) with \(\#C_n=n\)
    \[CH^*(C_n)\cong\mathbb{Z}[c_1(\sigma)]/(nc_1(\sigma))\]
    where \(\sigma\) is a faithful (i.e. non-trivial) degree 1 representation.  
\end{Thm}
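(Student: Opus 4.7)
The plan is to realize $BC_n$ via Totaro's approximation using a large direct sum of the faithful character $\sigma$, reduce to a computation on projective space, and then apply the Gysin sequence for a line bundle.

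First I would fix an integer $m \geq 1$ and take $V = \sigma^{\oplus m}$, $S = \{0\}$. Since $\sigma$ is faithful (i.e.\ the action of $C_n$ is by scalar multiplication by a primitive $n$-th root of unity), $C_n$ acts freely on $V-S = \mathbb{A}^m \setminus \{0\}$, and the codimension of $S$ is $m$. By \thref{thm:defthm} this is a valid approximation computing $CH^j(BC_n)$ for $j < m$, so it suffices to compute $CH^*((\mathbb{A}^m\setminus 0)/C_n)$ and let $m \to \infty$.

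Next I would factor the quotient as
\[
\mathbb{A}^m \setminus 0 \;\longrightarrow\; (\mathbb{A}^m\setminus 0)/C_n \;\longrightarrow\; (\mathbb{A}^m\setminus 0)/\mathbb{G}_m \;=\; \mathbb{P}^{m-1},
\]
where $C_n \subset \mathbb{G}_m$ sits as the $n$-th roots of unity. The second map is a $\mathbb{G}_m/C_n \cong \mathbb{G}_m$-bundle and hence the complement of the zero section in some line bundle $\mathcal{L}$ on $\mathbb{P}^{m-1}$. I would identify $\mathcal{L}$ by tracking the character: the quotient $\mathbb{G}_m/C_n \cong \mathbb{G}_m$ is the $n$-th power map, so $\mathcal{L} = \mathcal{O}(1)^{\otimes n}$, and thus $c_1(\mathcal{L}) = n \cdot H$ with $H$ the hyperplane class. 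The localization/Gysin sequence for the zero section then gives
\[
CH^*\bigl((\mathbb{A}^m\setminus 0)/C_n\bigr) \;\cong\; CH^*(\mathbb{P}^{m-1})/(c_1(\mathcal{L})) \;\cong\; \mathbb{Z}[H]/(H^m,\, nH),
\]
which in degrees $< m$ is just $\mathbb{Z}[H]/(nH)$.

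The remaining step is to identify $H \in CH^1$ with $c_1(\sigma)$, which amounts to checking that the line bundle on $(\mathbb{A}^m\setminus 0)/C_n$ associated with the representation $\sigma$ (via the Borel construction) pulls back to $\mathcal{O}(1)$ on $\mathbb{P}^{m-1}$; this follows from the compatibility of associated bundles with the inclusion $C_n \hookrightarrow \mathbb{G}_m$ and the fact that the tautological character of $\mathbb{G}_m$ yields $\mathcal{O}(-1)$, up to a sign convention for first Chern classes that one fixes once and for all. Passing to the colimit in $m$ then yields $CH^*(BC_n) \cong \mathbb{Z}[c_1(\sigma)]/(n\,c_1(\sigma))$. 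The main obstacle is purely bookkeeping: getting the identification of the line bundle on $\mathbb{P}^{m-1}$ (and hence the coefficient $n$ in the relation) right, including sign conventions, rather than anything substantive.
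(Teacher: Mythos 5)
Your argument is correct and is essentially the standard proof given in the cited reference: the paper itself offers no proof of this statement beyond citing \cite[Theorem 2.10]{To14}, where Totaro likewise realizes the approximation $(\mathbb{A}^m\setminus 0)/C_n$ as the complement of the zero section of $\mathcal{O}(-n)$ over $\mathbb{P}^{m-1}$ and applies the localization sequence. The only point worth making explicit is that faithfulness of $\sigma$ (not mere non-triviality, when $n$ is composite) is what guarantees the freeness of the action on $\mathbb{A}^m\setminus 0$, which you do use correctly.
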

\begin{proof}
    This+ is proven in \cite[Theorem 2.10]{To14}.
\end{proof}

Another important tool is the fact that assigning the Chow ring of a classifying space to a
group (scheme) is a contravariant functor. For subgroups this specializes to a notion of restriction, which will be heavily exploited in our computations. There is also a notion
of transfer in the other direction, that should be mentioned in this context.

\begin{Def}\thlabel{def:pull}
    Take \(H\to G\) to be a map of \(k\)-group schemes and \(U_G\) resp. \(U_H\)
    to be the \(V-S\) chosen in the approximation of \(BG\) resp. \(BH\) as in \thref{thm:defthm}.
    The pullback along \[(U_H\times U_G)/H\to U_G/G\] gives rise to a graded ring homomorphism preserving the degree
   \[ CH^*(BG)\to CH^*(BH).\]
    If \(H\leq G\) is a subgroup scheme we call this restriction.
    Suppose \(H\leq G\) is a closed subgroup scheme of finite index.
    The usual pushforward map for Chow rings yields a homomorphism of graded abelian groups preserving the degree
    \[\mathrm{tr}_H^G\colon CH^*(BH)\to CH^*(BG).\]
\end{Def}
\begin{Prop}\thlabel{lem:chtors}
    Let \(G\) be a finite group. Then \(CH^i(BG)\) with \(i\geq 1\) is \(\# G\)-torsion.
\end{Prop}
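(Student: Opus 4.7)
My plan is to apply the restriction--transfer pair of \thref{def:pull} with the trivial subgroup $\{e\}\leq G$, which has index $\#G$, combined with the vanishing $CH^i(B\{e\})=0$ for $i\geq 1$. The key identity is that the composition $\mathrm{tr}_{\{e\}}^G\circ \mathrm{res}$ equals multiplication by $\#G$ on $CH^*(BG)$, so that multiplication by $\#G$ factors through a zero group in positive degrees.

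First I would verify $CH^i(B\{e\})=0$ for $i\geq 1$. Since $\{e\}$ acts trivially on any representation, an approximation of $B\{e\}$ up to codimension $i$ is simply an open subvariety $V-S$ of an affine space $V$, where $S$ is closed of codimension $>i$. The localization exact sequence
\[CH^{i-\mathrm{codim}(S)}(S)\to CH^i(V)\to CH^i(V-S)\to 0\]
combined with $CH^j(\mathbb{A}^n)=0$ for $j\geq 1$ and the vanishing of the leftmost term for $i<\mathrm{codim}(S)$ gives the desired claim.

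Second, I would show $\mathrm{tr}_{\{e\}}^G\circ \mathrm{res}=\#G\cdot \mathrm{id}$. Choose a common approximation $U=V-S$ on which $G$ (and hence $\{e\}$) acts freely with $S$ of high enough codimension. Then $U\to U/G$ is a finite étale Galois cover of degree $\#G$ serving simultaneously as an approximation step for $B\{e\}\to BG$. The restriction of \thref{def:pull} corresponds to the flat pullback along $U\to U/G$ and the transfer to the proper pushforward, so the projection formula yields $\pi_*\pi^*(-)=\pi_*(1)\cdot(-)=\#G\cdot(-)$.

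Combining the two steps, for $i\geq 1$ multiplication by $\#G$ on $CH^i(BG)$ factors through the zero group $CH^i(B\{e\})$, hence vanishes, giving the torsion statement. The main subtlety, in my view, is the bookkeeping surrounding the approximations: one has to check that a single open $U$ can simultaneously approximate $B\{e\}$ and $BG$ through degree $i$, and that the restriction and transfer of \thref{def:pull} really do coincide with the geometric pullback and pushforward along $U\to U/G$. Once these compatibilities are recorded, the argument reduces to the classical projection-formula computation.
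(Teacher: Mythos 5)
Your argument is correct and follows essentially the same route as the paper: restrict to the trivial subgroup, use the projection formula to identify $\mathrm{tr}\circ\mathrm{res}$ with multiplication by $\#G$, and observe that this factors through $CH^*(B\{e\})$, which vanishes in positive degrees. The only cosmetic difference is that you verify the projection formula and the vanishing of $CH^i(B\{e\})$ by hand, whereas the paper cites \cite[Lemma 2.15]{To14} for the former and simply identifies $CH^*(B\{e\})$ with $CH^*(\mathrm{Spec}(\mathbb{C}))\cong\mathbb{Z}$ for the latter.
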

\begin{proof}
    In \cite[Lemma 2.15]{To14} it is shown for any subgroup \(H\leq G\) 
    that restriction is a ring homomorphism and transfer is a homomorphism of 
    \(CH^*(BG)\)-modules. Furthermore it is shown in loc. cit. that the transfer of \(1\in CH^*(BH)\) along
    \(H\to G\) is \([G:H]\). In particular restricting and then transferring along
    \(H\to G\) is multiplication by \([G:H]\).
    Picking \(H\) to be the trivial group we get
    \(CH^*(BH)=CH^*(\mathrm{Spec}(\mathbb{C}))\cong \mathbb{Z}\) meaning
    multiplication of elements of \(CH^*(BG)\) by \(\#G=[G:1]\) factors through
    \(CH^*(\mathrm{Spec}(\mathbb{C}))\) which is concentrated in degree 0.
\end{proof}
The third tool in our arsenal will be a comparison functor to group cohomology called
the cycle class map. For \(X\) a smooth scheme of dimension \(n\) over \(\mathbb{C}\) there is a realization
\(X(\mathbb{C})\) as a complex manifold given by the complex points of \(X\).
There is also a natural ring homomorphism 
\(CH^*(X)\to H^{2*}(X(\mathbb{C}),\mathbb{Z})\) called the cycle class map.
An analogue of the cycle class map for the motivic étale classifying spaces is described in \cite[Foreword]{gphd}.
\begin{Def}
    Suppose \(G\) is an affine finite type group scheme over \(\mathbb{C}\) and denote by \(G(\mathbb{C})\) the set of complex points of \(G\). 
    There exists a map of graded rings 
    \[CH^*(BG)\to H^{2*}(B(G(\mathbb{C})),\mathbb{Z})\]
    which doubles the degree meaning its image is concentrated in even degrees. This map is natural with respect to pullbacks as defined in \thref{def:pull}. It is called the cycle class map of \(BG\).
\end{Def}
This induces a map \(CH^*(BG)/p\to H^*(BG,\mathbb{Z})/p\to H^*(BG,\mathbb{F}_p)\),
which we will call the cycle class map modulo \(p\). The integral cycle class map is always an
isomorphism in degree 1.
\begin{Prop}\thlabel{prop:deg1}
    \[CH^1(BG)\cong H^2(BG,\mathbb{Z})\cong \mathrm{Hom}(G,\mathbb{C}^\times).\]
\end{Prop}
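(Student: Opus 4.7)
The plan is to establish the two isomorphisms separately---identifying each of $CH^1(BG)$ and $H^2(BG,\mathbb{Z})$ directly with $\mathrm{Hom}(G,\mathbb{C}^\times)$---and then to check that the two identifications agree through the cycle class map.

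First I would handle $CH^1(BG)\cong \mathrm{Hom}(G,\mathbb{C}^\times)$ by working on one of Totaro's approximations. Pick a finite-dimensional faithful complex $G$-representation $V$ and a closed $G$-invariant subvariety $S\subset V$ of codimension at least two such that $G$ acts freely on $U:=V-S$; then $CH^1(BG)=CH^1(U/G)=\mathrm{Pic}(U/G)$. Since $V$ is affine space and $S$ has codimension at least two, the excision sequence for Chow groups gives $\mathrm{Pic}(U)=\mathrm{Pic}(V)=0$. The standard comparison of equivariant and ordinary Picard groups under the free action then reads
\[
0\to \mathrm{Hom}(G,\mathcal{O}(U)^\times)\to \mathrm{Pic}(U/G)\to \mathrm{Pic}(U)^G,
\]
and $\mathcal{O}(U)^\times=\mathbb{C}^\times$ by the same codimension argument (invertible polynomials on $V$ minus a codimension-two subvariety are constants), so $CH^1(BG)\cong \mathrm{Hom}(G,\mathbb{C}^\times)$.

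For $H^2(BG,\mathbb{Z})\cong \mathrm{Hom}(G,\mathbb{C}^\times)$, I would apply singular cohomology of the classical $BG$ to the short exact sequence of abelian topological groups
\[
0\to \mathbb{Z}\to \mathbb{R}\to S^1\to 0.
\]
Because $G$ is finite, $H^{>0}(BG,\mathbb{R})=0$ by the usual averaging/Maschke argument (or equivalently because $BG$ is rationally acyclic), so the long exact sequence produces a boundary isomorphism $H^1(BG,S^1)\cong H^2(BG,\mathbb{Z})$. Furthermore $H^1(BG,S^1)=\mathrm{Hom}(G,S^1)=\mathrm{Hom}(G,\mathbb{C}^\times)$, the last equality because every finite-order element of $\mathbb{C}^\times$ has unit modulus.

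To finish, I would verify that these two identifications agree via the cycle class map. Given $\chi\in \mathrm{Hom}(G,\mathbb{C}^\times)$, the associated line bundle $L_\chi:=U\times^G\mathbb{C}_\chi$ on $U/G$ represents $\chi$ in $CH^1$ under the first identification, and the cycle class map sends its algebraic first Chern class to the topological first Chern class of its analytification. The main obstacle is checking that this topological class is precisely the image of $\chi$ under the boundary of the exponential sequence; this follows from the classical identification of the first Chern class of a topological line bundle with the pullback of the generator of $H^2(K(\mathbb{Z},2),\mathbb{Z})$ along its classifying map, performed on $U/G$ and passed to the colimit in $i$.
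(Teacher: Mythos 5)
Your argument is correct; the paper itself gives no proof and simply cites \cite[Lemma 2.26]{To14}, and your descent sequence $0\to \mathrm{Hom}(G,\mathcal{O}(U)^\times)\to \mathrm{Pic}(U/G)\to \mathrm{Pic}(U)^G$ together with the exponential/Bockstein argument for $H^2(BG,\mathbb{Z})$ is exactly the standard proof behind that reference. The only point worth flagging is that both your vanishing of $H^{>0}(BG,\mathbb{R})$ and the identification $H^1(G,\mathbb{C}^\times)=\mathrm{Hom}(G,\mathbb{C}^\times)$ use that $G$ is finite, which is the only case needed in this paper.
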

\begin{proof}
    This has been proven in \cite[Lemma 2.26]{To14}.
\end{proof}

Let's finish this section by describing how finite abelian groups behave very well in this theory.
\begin{Lem}\thlabel{lem:elab}
    Suppose \(G\) to be an affine group scheme over \(\mathbb{C}\) and \(A\) to be a finite abelian group. Then
    \[CH^*(B(G\times A))\cong CH^*(BG)\otimes_\mathbb{Z} CH^*(BA).\]
    In particular the regular and modulo \(p\) cycle class maps are injective for 
    finite abelian groups, and \[CH^*(BC^n_p)\cong \frac{\mathbb{Z}[c_1(\sigma_1),\dots, c_1(\sigma_n)]}{(pc_1(\sigma_1),\dots, pc_1(\sigma_n))}\]
     where the \(\sigma_i\) are the degree 1 representations
    that project \(C_p^n\) to its \(i\)-th coordinate which is then identified with \(\mu_p\)
    the group of \(p\)-th roots of unity.
\end{Lem}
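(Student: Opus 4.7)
The plan is to establish the Künneth-type isomorphism by induction on the number of cyclic factors of $A$, using the structure theorem for finite abelian groups. It suffices to prove $CH^*(B(G \times C_n)) \cong CH^*(BG) \otimes_{\mathbb{Z}} CH^*(BC_n)$ for an arbitrary affine group scheme $G$ over $\mathbb{C}$, because iterating this assertion handles $A \cong C_{n_1} \times \cdots \times C_{n_k}$.

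For the cyclic case I pick approximations: $(V_G - S_G)$ for $BG$ with $\operatorname{codim}(S_G)$ large, and for $BC_n$ take $U_m := \mathbb{A}^m \setminus \{0\}$ with $C_n = \mu_n$ acting by scalar multiplication. Then $(V_G - S_G)/G \times U_m/C_n$ approximates $B(G \times C_n)$ in the sense of \thref{thm:defthm}. The quotient map $U_m/C_n \to U_m/\mathbb{G}_m = \mathbb{P}^{m-1}$ is a $\mathbb{G}_m$-torsor corresponding to the line bundle $\mathcal{O}_{\mathbb{P}^{m-1}}(n)$.

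Now I would apply two standard tools. First, setting $X := (V_G - S_G)/G$, the variety $X \times \mathbb{P}^{m-1}$ is the trivial projective bundle $\mathbb{P}(\mathcal{O}_X^{\oplus m})$, so the projective bundle formula gives
\[CH^*(X \times \mathbb{P}^{m-1}) \cong CH^*(X)[h]/(h^m).\]
Second, the localization sequence for the zero section of a line bundle $L$ yields $CH^*(L \setminus 0) \cong CH^*(\text{base})/(c_1(L))$. Applied to $L = \operatorname{pr}_2^*\mathcal{O}(n)$ over $X \times \mathbb{P}^{m-1}$, this produces
\[CH^*(X \times U_m/C_n) \cong CH^*(X)[h]/(h^m,\, nh).\]
Passing to the limit $m \to \infty$ and $\operatorname{codim}(S_G) \to \infty$, and identifying $h$ with the first Chern class of the tautological character $\sigma \colon C_n \hookrightarrow \mathbb{C}^\times$, yields
\[CH^*(B(G \times C_n)) \cong CH^*(BG) \otimes \mathbb{Z}[c_1(\sigma)]/(n c_1(\sigma)) = CH^*(BG) \otimes CH^*(BC_n).\]

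The explicit presentation of $CH^*(BC_p^n)$ then follows by iterating this tensor decomposition, with the generators identified as the $c_1(\sigma_i)$. For injectivity of the cycle class maps I would again reduce to the cyclic case: the Künneth formula for singular cohomology places all $\operatorname{Tor}$ corrections in odd total degrees, whereas the image of the cycle class map lies entirely in even degrees, so the external-product map from $\bigotimes_i H^{2*}(BC_{n_i}(\mathbb{C}), \mathbb{Z})$ into the even-degree part of $H^*(BA(\mathbb{C}), \mathbb{Z})$ is injective. For a single $C_n$, the integral cycle class map $CH^1(BC_n) \to H^2(BC_n, \mathbb{Z})$ is an isomorphism by \thref{prop:deg1}, and since both rings are generated by this class, the whole integral map is an isomorphism; the mod $p$ statement then follows formally.

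The hard part will be the Künneth step, specifically the identification of the line bundle produced by the $C_n$-quotient as precisely $\mathcal{O}(n)$ and the verification that the class $h$ arising from the projective bundle formula matches $c_1(\sigma)$ in the limit. This compatibility essentially amounts to recomputing $CH^*(BC_n)$ through the same construction and depends on the conventional embedding $\mu_n \hookrightarrow \mathbb{G}_m$ being the one compatible with the standard scalar action on $\mathbb{A}^m$.
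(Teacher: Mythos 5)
Your argument is correct and is essentially the proof of the result the paper relies on: the paper itself gives no argument beyond citing \cite[Lemma 2.12]{To14}, and the proof there is exactly your combination of the projective bundle formula for \(X\times\mathbb{P}^{m-1}\) with the localization sequence for the complement of the zero section of \(\mathcal{O}(\pm n)\), carried out with \(CH^*(X)\)-coefficients and then passed to the limit. The convention issue you flag at the end is immaterial, since the ideals \((c_1(\mathcal{O}(n)))=(nh)\) and \((c_1(\mathcal{O}(-n)))=(-nh)\) coincide and \(c_1(\sigma)=\pm h\) generates the same ideal; and the remaining \emph{formal} mod \(p\) step is supplied by the universal coefficient sequence \(0\to H^{2i}(BA,\mathbb{Z})/p\to H^{2i}(BA,\mathbb{F}_p)\).
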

\begin{proof}
    This has been proven in \cite[Lemma 2.12]{To14}.
\end{proof}
\begin{Kor}\thlabel{kor:finab}
    Let \(V\) be a finite abelian group. The integral cycle class map and the mod \(p\) cycle classe map are
    both injective for \(BV\).
\end{Kor}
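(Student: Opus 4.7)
The plan is to reduce to the case of cyclic groups via the structure theorem and then recombine using K\"unneth-type formulas. By the structure theorem for finite abelian groups, decompose $V\cong\prod_{i=1}^r C_{n_i}$. Iterated application of \thref{lem:elab} yields $CH^*(BV)\cong\bigotimes_i CH^*(BC_{n_i})$, so it suffices to understand each cyclic factor and then combine.

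For each cyclic factor $C_n$ both the Chow ring (by the preceding theorem on $CH^*(BC_n)$) and the integral cohomology ring are of the form $\mathbb{Z}[c]/(nc)$, with generators in degrees $1$ and $2$ respectively, and the cycle class map sends the algebraic first Chern class of the standard character to its topological counterpart, hence is an isomorphism on each cyclic factor. I would then invoke the topological K\"unneth theorem for $BV\simeq\prod_i BC_{n_i}$: the integral cohomology splits into a tensor-product part, lying entirely in even total degree since each $H^*(BC_{n_i},\mathbb{Z})$ is concentrated in even degrees, and a Tor part, lying in odd total degree. Because the cycle class map doubles the grading, its image lands in even cohomological degree and hence in the tensor-product summand; on this summand it factors as the tensor product of the cyclic cycle class maps, which are isomorphisms, so the whole map is injective.

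For the mod $p$ variant the argument is analogous and slightly cleaner. First reduce to $V$ being a $p$-group by noting that the prime-to-$p$ part contributes only in degree $0$ after tensoring with $\mathbb{F}_p$, on both the Chow and the cohomology side. Then the K\"unneth theorem over the field $\mathbb{F}_p$ has no Tor terms, so injectivity follows immediately from a tensor product of isomorphisms on each cyclic factor. The only subtle point in either case is ensuring that the tensor-product part of integral cohomology embeds degree by degree as a direct summand; this is automatic from the parity observation above, since the tensor and Tor contributions live in disjoint parities of total degree, so no cancellation between them can swallow classes coming from Chow.
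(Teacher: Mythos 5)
Your overall strategy (decompose into cyclic factors, compare via K\"unneth) is sound, and it is in the same spirit as the paper, which simply quotes \thref{lem:elab} (Totaro's Lemma 2.12) for this corollary rather than reproving it. The mod \(p\) half of your argument is fine: over the field \(\mathbb{F}_p\) there are no Tor terms, and a tensor product of injective maps of \(\mathbb{F}_p\)-vector spaces is injective.

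However, the integral half contains a genuine gap: the claim that the Tor contributions to \(H^*(BV,\mathbb{Z})\) ``live in odd total degree'' is only true for \emph{two} cyclic factors. When you iterate the K\"unneth formula over three or more factors, the odd-degree Tor classes produced at one stage feed back into the Tor term at the next stage and land in \emph{even} total degree. Concretely, for \(V=C_2^3\) one finds \(H^4(BV,\mathbb{Z})\cong(\mathbb{Z}/2)^7\), whereas the image of the cross-product map \(\bigotimes_i H^*(BC_2,\mathbb{Z})\to H^*(BV,\mathbb{Z})\) in degree \(4\) is only \((\mathbb{Z}/2)^6\) (spanned by the six monomials \(c_ic_j\)); the extra summand is \(\mathrm{Tor}\bigl(H^3(BC_2^2,\mathbb{Z}),H^2(BC_2,\mathbb{Z})\bigr)\), an even-degree Tor class. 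So ``even degree, hence in the tensor-product summand'' is false, and the parity argument does not rule out cancellation. The conclusion nevertheless holds, and the repair is easy and makes the parity discussion unnecessary: the K\"unneth cross-product map from the full tensor product \(\bigotimes_i H^*(BC_{n_i},\mathbb{Z})\) into \(H^*(BV,\mathbb{Z})\) is (split) injective for any number of factors, and the integral cycle class map factors through it --- by naturality it sends each \(c_1(\sigma_i)\) to the pullback of the topological generator, and by multiplicativity it sends \(CH^*(BV)\cong\bigotimes_i CH^*(BC_{n_i})\) isomorphically onto the image of that cross product. Injectivity follows without any claim about where the Tor classes live.
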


\section{Controlling the Chow ring in high degree}\label{sec:control}
The purpose of this section is to gather some powerful computational tools in our pursuit for explicit descriptions of \(CH^*(BG)/p\) for a prime number \(p\) and \(G\)
a finite group. More concretely, we introduce two theorems that will reduce the computation of \(CH^*(BG)/p\) to computations in degrees below some upper bound. This bound scales with the degrees of direct summands of the faithful representation of \(G\) with lowest degree. These tools have been introduced in \cite{To14}, where they were used to compute all the mod \(p\) Chow rings of finite groups of order \(p^4\) with \(p=2,3\) and all but two groups of order \(p^4\) for \(p\geq 5\). We will use the same tools for our computations. They fall under two umbrellas, detection of cycles on elementary abelian subgroups and bounds for the degrees of generating elements and relations in a presentation. Representations will always assumed to be complex of finite degree.
\subsection{Detecting cycles on elementary abelian groups}
We will denote by \(M^{\leq d}\) the quotient of a graded module \(M\) by its elements of degree greater than \(d\). 
\begin{Thm}\thlabel{thm:detect}
    Suppose \(G\) has a faithful representation of degree \(n\) and its center has \(p\)-rank \(c\). For any subgroup \(H\leq G\) denote by \(C_G(H)\) the centralizer of \(H\) in \(G\). The \(\mathbb{F}_p\)-algebra homomorphism
    \[CH^*(BG)/p\to\prod_{V}CH^*(BV)/p\otimes_{\mathbb{F}_p} CH^{\leq (n-c)}(BC_G(V))/p\]
    is injective. Here the component maps are given by the pullback along the multiplication map
    \(V\times C_G(V)\to G\) and \(V\) runs over the set of elementary abelian subgroups of \(G\).
\end{Thm}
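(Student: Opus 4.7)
The plan is to adapt Quillen's classical detection theorem for group cohomology to the Chow-ring setting. The statement naturally splits into detection of non-nilpotent classes on elementary abelian subgroups (the \(CH^*(BV)/p\) tensor factor) and detection of nilpotent classes in bounded degree on the centralizers (the truncated \(CH^{\leq (n-c)}(BC_G(V))/p\) factor). The two key inputs I would need are a Chow-ring analogue of Quillen's stratification theorem, together with a sharp bound on the nilpotency degree of kernel elements coming from the faithful representation and the central \(p\)-rank.

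First I would fix \(\alpha\in CH^*(BG)/p\) in the kernel of the claimed map. Projecting each centralizer factor to its degree-zero part \(CH^0(BC_G(V))/p=\mathbb{F}_p\) shows that \(\alpha\) restricts to zero on every elementary abelian subgroup \(V\). A Chow-ring Quillen stratification then forces \(\alpha\) to be nilpotent, so the problem reduces to detecting nilpotent classes on the refined product \(V\times C_G(V)\) up to degree \(n-c\) on the centralizer side.

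Second, I would exploit the faithful representation \(\phi\) together with a central elementary abelian subgroup \(V_0\leq Z(G)\) of rank \(c\). Since \(V_0\) is central, \(\phi|_{V_0}\) splits as a sum of characters, and the hypothesis on the \(p\)-rank supplies \(c\) of these characters whose first Chern classes are \(\mathbb{F}_p\)-linearly independent in \(CH^1(BV_0)/p\). Their product occurs as a factor of the total Chern class of \(\phi\), yielding a decomposition \(c_n(\phi)=\gamma_1\cdots\gamma_c\cdot\beta\) with each \(\gamma_i\in CH^1\) pulled from \(V_0\) and \(\beta\in CH^{n-c}(BG)/p\). Under the multiplication homomorphism \(V\times C_G(V)\to G\) (well defined because \(V\subseteq Z(C_G(V))\)), combined with \thref{lem:elab} to identify \(CH^*(B(V\times C_G(V)))\cong CH^*(BV)\otimes CH^*(BC_G(V))\), the linear classes \(\gamma_i\) land in the \(CH^*(BV)/p\) factor and \(\beta\) in the centralizer factor.

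Finally, a projection-formula argument converts the nilpotency of \(\alpha\) into the desired truncation: any high-degree component of the restriction of \(\alpha\) to a centralizer is paired, via multiplication by powers of the central linear classes \(\gamma_i\), with lower-degree information on \(V\); the \(c\) central Chern classes absorb exactly \(c\) degrees of \(\beta\), leaving at most degree \(n-c\) needed on the centralizer factor. I expect the principal obstacle to be the precise degree bookkeeping in this last step: verifying that the faithful representation of degree \(n\) paired with central \(p\)-rank \(c\) produces the sharp truncation \(n-c\), rather than some weaker bound, is the technical heart of the theorem and requires using the detailed Chern-class structure of \(\phi\) restricted to centralizers of elementary abelian subgroups.
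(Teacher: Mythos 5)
The paper does not actually prove \thref{thm:detect}; it is imported verbatim from \cite[Theorem 12.7]{To14}, so your proposal has to be measured against Totaro's argument. That argument is a geometric induction: one restricts the faithful representation \(W\) to a maximal central elementary abelian subgroup, splits \(W\) into isotypic pieces, and runs the localization (basic exact) sequence for the equivariant Chow groups of the associated projectivizations and open strata of \(W\); the bound \(n-c\) comes from the dimensions of these strata, iterated over the rank-\(c\) central subgroup. Your sketch takes a different route, and both of its halves have genuine gaps. First, the theorem asserts honest injectivity of an \(\mathbb{F}_p\)-algebra map, not injectivity modulo nilpotents, and your reduction of a kernel element \(\alpha\) to the nilpotent case rests on ``a Chow-ring analogue of Quillen's stratification theorem.'' No such analogue is available as an off-the-shelf input: in \cite{To14} the detection theorem you are trying to prove is precisely what plays the role of Quillen stratification for Chow rings (the topological proof does not transport, because the cycle class map is not known to have nilpotent kernel), so invoking a stratification theorem here is close to circular, and the statements one could import via cobordism only control the kernel up to nilpotents and \(p\)-th powers, which is strictly weaker than what is needed.

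Second, the mechanism you propose for the truncation bound --- factoring \(c_n(\phi)=\gamma_1\cdots\gamma_c\cdot\beta\) with the \(\gamma_i\) coming from a central \(V_0\) and then ``absorbing \(c\) degrees'' via the projection formula --- does not detect \(\alpha\). Multiplication by \(c_n(\phi)\) or by the \(\gamma_i\) is not injective (these classes are zero divisors in general, the \(\gamma_i\) being first Chern classes of torsion characters), so knowing \(\alpha\cdot(\text{central classes})\) says nothing about \(\alpha\); moreover the projection formula relates transfers to restrictions, not degrees on \(V\) to degrees on \(C_G(V)\), and the decomposition of \(c_n(\phi)\) lives on the single central subgroup \(V_0\) rather than on the arbitrary \(V\) over which the product is taken. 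You have correctly located the difficulty (``the technical heart''), but the proposal contains no argument for it; the sharp bound \(n-c\) in Totaro's proof is a codimension count in the equivariant stratification, not Chern-class bookkeeping. As written, the proposal is a plausible-sounding plan rather than a proof, and I do not see how to complete it along these lines without in effect redoing Totaro's geometric argument.
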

\begin{proof}
    This is proven in \cite[Theorem 12.7]{To14}.
\end{proof}

\thref{thm:detect} means in theory that we need to check for relations in a presentation of \(CH^*(BG)/p\) in degrees larger than \(n-c\) only on the collection of maximal abelian subgroups.
Putting this into practice is a bit subtle as the image of the map in \(CH^*(BV)/p\otimes_{\mathbb{F}_p} CH^{\leq (n-c)}(BC_G(V))/p\) can contain mixed terms that are non-zero but restrict to zero in
\(CH^*(BV)/p\) and \(CH^{\leq (n-c)}(BC_G(V))/p\).

The following observation will be very useful in dealing with this. 
\begin{Lem}\thlabel{Lem:detect2}
     Suppose \(G\) has a faithful representation of degree \(n\) and its center is of \(p\)-rank \(c\).
    Assume furthermore that \(CH^*(BC_G(V))/p\) for non-central elementary abelian subgroups \(V\leq G\)  contains no nilpotents. The modulo \(p\) cycle class map for \(BG\) is injective in degrees \(>n-c\).
\end{Lem}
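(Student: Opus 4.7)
The plan is to start with a class \(x \in CH^d(BG)/p\) satisfying \(d > n-c\) and \(\mathrm{cl}(x) = 0\), and deduce \(x = 0\). By \thref{thm:detect}, it is enough to check that for every elementary abelian subgroup \(V \leq G\) the image \(y_V\) of \(x\) in \(CH^*(BV)/p \otimes_{\mathbb{F}_p} CH^{\leq n-c}(BC_G(V))/p\) vanishes. By \thref{lem:elab}, this target is a truncation of \(CH^*(B(V \times C_G(V)))/p\); combined with naturality of the cycle class map and the Künneth decomposition of \(H^*(B(V \times C_G(V)), \mathbb{F}_p)\), we obtain \(\mathrm{cl}(y_V) = 0\).

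Next I would treat the case of non-central \(V\). Here the strategy is to show the cycle class map on the tensor product is injective by showing it is injective on each factor separately. On \(CH^*(BV)/p\) this is \thref{kor:finab}. On \(CH^*(BC_G(V))/p\) I would combine the no-nilpotent hypothesis with the general fact, available in \cite{To14}, that the kernel of the mod \(p\) cycle class map on the Chow ring of a classifying space is contained in the nilradical; together these force injectivity on the second factor. A standard argument with tensor products of injective \(\mathbb{F}_p\)-linear maps, together with the truncation to \(CH^{\leq n-c}\), then yields \(y_V = 0\) for every non-central \(V\).

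The remaining issue, and the main obstacle, is the case of a central elementary abelian \(V\), where \(C_G(V) = G\) and the hypothesis is empty. To handle this I would analyze the pullback along \(m \colon V \times G \to G\) directly: because \(m\) restricts to \(\mathrm{id}_G\) on \(\{e\} \times G\) and to the inclusion \(V \hookrightarrow G\) on \(V \times \{e\}\), the bidegree \((\ast,0)\) component of \(y_V\) in \(CH^*(BV)/p \otimes CH^{\leq n-c}(BG)/p\) equals \(\mathrm{res}^G_V(x) \otimes 1\), which must vanish by injectivity of the cycle class map on \(BV\) (\thref{kor:finab}). The mixed-bidegree terms are controlled by embedding \(V\) into a larger, non-central elementary abelian \(V' \supseteq V\) whenever \(G\) is non-abelian and invoking the non-central case for \(V'\); equivalently, one could rely on a refinement of \thref{thm:detect} where \(V\) ranges only over non-central elementary abelian subgroups. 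Either way, the essential content of the proof is the combination of detection on elementary abelians with the fact that the no-nilpotent hypothesis upgrades the weak injectivity statement for Chow rings of classifying spaces to honest injectivity of the mod \(p\) cycle class map.
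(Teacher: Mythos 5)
Your overall strategy --- reduce to the vanishing of the components \(y_V\) via \thref{thm:detect}, then use the no-nilpotents hypothesis to kill the components at non-central \(V\) --- is the same as the paper's. For non-central \(V\) your argument is essentially the one in the paper: there, \(y_V=\sum_i a_i\otimes b_i\) is written in reduced form, each \(b_i\) is seen to be nilpotent because \(CH^*(BV)/p\otimes_{\mathbb{F}_p}CH^*(BC_G(V))/p\) is a polynomial ring over \(CH^*(BC_G(V))/p\) and the whole class is nilpotent, and the hypothesis then forces \(b_i=0\). Both your version and the paper's ultimately rest on a nilpotence statement for classes killed by the mod \(p\) cycle class map (equivalently, by restriction to all elementary abelian subgroups); you should pin down exactly where in \cite{To14} the assertion ``\(\ker(\mathrm{cl})\) lies in the nilradical'' is established, since it is not among the results quoted in this paper, and the paper's own proof likewise only asserts the nilpotence of the restricted class without a reference.

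The genuine gap is the central case, which you correctly single out as the main obstacle but do not resolve. Your first proposal --- embed a central \(V\) into a non-central elementary abelian \(V'\) --- fails already for \(G=Q_8\) (or any generalized quaternion group): the unique nontrivial elementary abelian subgroup there is the center, so no such \(V'\) exists, yet the lemma applies with vacuous hypothesis and asserts injectivity in degrees \(>1\). Even when such a \(V'\) exists, you do not explain how the vanishing of \(y_{V'}\) would control the mixed K\"unneth components of \(y_V\); the \(V\)- and \(V'\)-components of the detection map are different maps to different targets, and \thref{thm:detect} requires every component, including the central ones, to vanish. Your second proposal --- a refinement of \thref{thm:detect} in which \(V\) ranges only over non-central elementary abelian subgroups --- is not available in the paper and is false as a general tool (again \(Q_8\), or any abelian \(G\), where the refined product is empty). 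The paper handles central \(V\) differently: once the non-central components are shown to vanish, a surviving nonzero component must sit at a central \(V\), where \(C_G(V)=G\); because \(\deg\sigma>n-c\), every K\"unneth component surviving the truncation has strictly positive degree in the \(CH^*(BV)/p\) factor, and the paper then derives a contradiction using the splitting \(G\cong\{1_V\}\times G\to V\times G\to G\) of the multiplication map. Whatever one makes of the details of that step, some argument specific to central \(V\) is indispensable, and your proposal does not supply one.
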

\begin{proof}
Suppose there is a non-zero element \(\sigma\in CH^k(BG)/p\) with \(k>n-c\)
in the kernel of the modulo \(p\) cycle class map, which means by \thref{kor:finab} it restricts to zero in
all Chow rings of elementary abelian subgroups of \(G\). By \thref{thm:detect} there exists an elementary abelian \(V\leq G\) such that \(\sigma\) restricts to a non-zero nilpotent \(\sum_i (a_i\otimes b_i)\) in \(CH^*(BV)/p\otimes_{\mathbb{F}_p} CH^*(BC_G(V))/p\). We assume \(\sum_i (a_i\otimes b_i)\) to be a reduced term i.e. no pair \(a_i,a_j\) is \(\mathbb{F}_p\)-linearly dependent and no pair \(b_i, b_j\) is \(\mathbb{F}_p\)-linearly dependent. Furthermore we can assume that the \(a_i\) and \(b_i\) are all homogenous elements.
Using \thref{thm:detect} the \(b_i\in CH^*(BC_G(V))/p\) must be all of positive degree and at least one of degree \(\leq n-c\) while at least one \(a_i\in CH^*(BV)/p\) must be of positive degree. This means in particular that 
the \(b_i\) with \(a_i\) of positive degree has to be nilpotent, as \(CH^*(BV)/p\otimes _{\mathbb{F}_p} CH^*(BC_G(V))/p\) is just a polynomial ring over \(CH^*(BC_G(V))/p\) by \thref{lem:elab}.
By our assumptions \(V\) has to be a central elementary abelian group
which means \(\sigma\) is not mapped to zero in \(CH^*(BV)/p\otimes_{\mathbb{F}_p} CH^{\leq n-c}(BG)/p\)
and thus is mapped to non-zero in \(CH^{\leq n-c}(BG)/p\) via pullback along the identity \(G\cong \{1_V\}\times G\to V\times G \to G\). This violates our original assumptions for \(\sigma\).
\end{proof}

\subsection{Regularity of the modulo \(p\) Chow ring}
In even higher degrees we have much more control thanks to the following result.
\begin{Def}
    Let \(R^*\) be a commutative graded algebra over a field \(k\) with \(R^0\cong k\). Take \(\overline{k}\)
    to be the algebraic closure of \(k\). Define \(\sigma(R^*)\) to be the
    minimum of all the \(\sum_i(|y_i|-1)\) over all homomorphisms
    \[\overline{k}[y_1,\dots, y_n]\to R^*\otimes_k \overline{k}\]
    such that \(R^*\otimes_k \overline{k}\) is generated by elements of bounded
    degree as a module over the polynomial ring \(\overline{k}[y_1,\dots, y_n]\).
\end{Def}
\begin{Thm}\thlabel{thm:faithgens}
    Suppose \(G\) has a faithful representation \(V\) of degree \(n\). Then \(CH^*(BG)\)
    is generated as a module over \(\mathbb{Z}[c_1(V),\dots, c_n(V)]\) by elements of degrees at most \(n^2\) .
\end{Thm}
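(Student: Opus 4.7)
The plan is to leverage the faithful representation \(V\) to obtain an embedding \(G \hookrightarrow \GL(n)\) and thus a map of motivic étale classifying spaces \(BG \to B\GL(n)\). Because \(CH^*(B\GL(n)) \cong \mathbb{Z}[c_1,\ldots,c_n]\) with \(c_i\) pulling back to \(c_i(V)\), the assertion is equivalent to the claim that \(CH^*(BG)\) is generated as a module over \(CH^*(B\GL(n))\) by elements of codimension at most \(n^2\).

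To bound the generators, I would pass to the finite-dimensional approximations of \thref{thm:defthm}. Let \(W\) be a linear representation of \(\GL(n)\) with a Zariski-open \(U \subset W\) on which \(\GL(n)\) acts freely and whose complement has arbitrarily high codimension. Then \(U/G\) (resp.\ \(U/\GL(n)\)) approximates \(BG\) (resp.\ \(B\GL(n)\)), and the projection \(U/G \to U/\GL(n)\) is a Zariski-locally trivial fiber bundle with fiber \(\GL(n)/G\). Since \(G\) is finite, \(\GL(n)/G\) is a smooth quasi-projective variety of pure dimension \(n^2 = \dim \GL(n)\), so \(CH^*(\GL(n)/G)\) is concentrated in codimensions \(0,\ldots,n^2\).

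The core step would be a Leray--Hirsch-style argument: I would show that \(CH^*(U/G)\) is generated as a \(CH^*(U/\GL(n))\)-module by classes whose restrictions to a generic fiber constitute a generating set of \(CH^*(\GL(n)/G)\). Any such lift may be chosen in the same codimension, yielding module generators of codimension at most \(n^2\); passing to the colimit over approximations transports this bound to \(CH^*(BG)\). The main obstacle is executing this Leray--Hirsch step integrally, since one does not automatically have a global splitting of the restriction map \(CH^*(U/G) \to CH^*(\GL(n)/G)\). I would remedy this by inducting on codimension using a stratification of the base \(U/\GL(n)\), combining the localization long exact sequence for Chow groups with the projection formula to extract module generators of bounded codimension from generators of the fiber's Chow ring, without needing the full strength of module freeness.
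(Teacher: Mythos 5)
The paper does not prove this statement itself but cites \cite{To14} (Theorem 5.1), and your sketch is essentially a reconstruction of Totaro's argument there: embed \(G\hookrightarrow\GL(n)\) via \(V\), use the fibration \(BG\to B\GL(n)\) with fibre \(\GL(n)/G\) of dimension \(n^2\), identify \(CH^*(B\GL(n))\cong\mathbb{Z}[c_1,\dots,c_n]\), and replace an integral Leray--Hirsch splitting by a noetherian-induction/localization argument over the base. This is the same approach; the only caveat is that your intermediate formulation (lifting generators of \(CH^*(\GL(n)/G)\) from a generic fibre) is not quite what the induction yields nor what is needed --- the localization sequence directly produces module generators of codimension at most \(n^2\), because any subvariety of the total space has codimension at most \(\dim(\GL(n)/G)=n^2\) over the closure of its image in the base.
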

\begin{proof}
   This has been proven in \cite[Theorem 5.1]{To14}.
\end{proof}
\begin{Thm}\thlabel{thm:reg}
    Take \(y_i\) to be the generators of the polynomial algebra appearing in the definition of \(\sigma(R^*)\) such that \(\sigma(R^*)=\sum_i(|y_i|-1)\).
    The \(\mathbb{F}_p\)-algebra \(CH^*(BG)/p\) is generated by 
    elements of degree at most
    \(\max\{|y_i|,\sigma(R^*)\}\) modulo relations of degree at most 
    \[\max\{|y_i|, \sigma(R^*)+1,2\sigma(R^*)\}\]
    with \(R^*:=CH^*(BG)/p\).
\end{Thm}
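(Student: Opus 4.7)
The plan is to combine a deep structural input — that the Castelnuovo--Mumford regularity of $R^* = CH^*(BG)/p$ is non-positive, which is Totaro's Chow-ring analogue of Symonds' regularity theorem for mod $p$ group cohomology — with a standard conversion from regularity to presentation bounds. The non-trivial arithmetic content is the regularity theorem; the rest is formal commutative algebra about graded modules over a polynomial ring.

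First I would reduce to the algebraically closed case: base changing $R^*$ to $\overline{\mathbb{F}_p}$ leaves minimal algebra generator and relation degrees unchanged, and makes the parameters $y_1,\dots,y_n$ realizing the infimum $\sigma(R^*) = \sum_i(|y_i|-1)$ directly available. Setting $A := \overline{\mathbb{F}_p}[y_1,\ldots,y_n]$ and $M := R^* \otimes \overline{\mathbb{F}_p}$, the defining hypothesis on the $y_i$ makes $M$ a finitely generated graded $A$-module. The crucial numerical step is to convert the scalar regularity bound on $R^*$ into a module-regularity bound via the standard change-of-rings formula
\[\mathrm{reg}_A(M) \leq \mathrm{reg}(R^*) + \sum_i(|y_i|-1) \leq \sigma(R^*).\]

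From here the generator bound is immediate: choose minimal homogeneous $A$-module generators $1 = e_0, e_1, \ldots, e_N$ of $M$ sitting in degrees $0 \leq d_1 \leq \cdots \leq d_N \leq \sigma(R^*)$. As an $\mathbb{F}_p$-algebra, $R^*$ is then generated by $y_1,\ldots,y_n$ together with $e_1,\ldots,e_N$, all in degrees at most $\max\{|y_i|, \sigma(R^*)\}$. For the relation bound I would separate the presentation relations in the chosen generators into two classes: (a) $A$-linear syzygies $\sum_j g_j(y) e_j = 0$ among the $e_j$, which by the same bound $\mathrm{reg}_A(M) \leq \sigma(R^*)$ may be chosen in degrees at most $\sigma(R^*)+1$, since the first syzygies of a graded $A$-module of regularity $r$ sit in degree $\leq r+1$; and (b) the multiplication relations $e_i e_j = \sum_k f_{ij}^k(y)\, e_k$, which have total degree $d_i + d_j \leq 2\sigma(R^*)$. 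Together with the $|y_i|$ term absorbing the degenerate small-$\sigma$ case, these exhaust a presentation and yield the claimed $\max\{|y_i|, \sigma(R^*)+1, 2\sigma(R^*)\}$.

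The main obstacle is the regularity input itself, which is the culmination of Totaro's adaptation of Symonds' theorem and not at all formal, together with the precise change-of-rings inequality relating $\mathrm{reg}(R^*)$ to $\mathrm{reg}_A(M)$ — a local-cohomology manipulation whose sharpness is exactly what makes the bound $\max\{|y_i|, \sigma(R^*)+1, 2\sigma(R^*)\}$ rather than a cruder $\max\{|y_i| + \sigma(R^*), 2\sigma(R^*)\}$. Once those two ingredients are in hand, the presentation bounds fall out by tracking Betti degrees in a minimal $A$-free resolution of $M$ and combining them with the multiplication table of the $e_j$.
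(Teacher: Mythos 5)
The paper does not prove this statement itself; it cites \cite[Theorems 6.4 and 6.5]{To14}, and your sketch is essentially a reconstruction of Totaro's argument there: the non-positive regularity of $CH^*(BG)/p$ (the Chow analogue of Symonds' theorem) is converted into Betti-degree bounds for $R^*\otimes\overline{\mathbb{F}_p}$ as a module over $\overline{\mathbb{F}_p}[y_1,\dots,y_n]$, and then assembled into an algebra presentation from the module generators, their $A$-linear syzygies, and the multiplication table. So your route coincides with the proof the paper relies on, and I see no gap beyond the deliberately black-boxed regularity input and change-of-rings inequality.
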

\begin{proof}
    This is proven in \cite[Theorem 6.4 and Theorem 6.5]{To14}.
\end{proof}
\thref{thm:reg} can be formulated as a regularity result on \(CH^*(BG)/p\), hence the title of the section. For details, see \cite[Section 6]{To14}.
\begin{Kor}\thlabel{kor:reg}
    Let \(y_i\) and \(\sigma(R^*)\) be as in \thref{thm:reg}. The \(\mathbb{Z}\)-algebra
    \(CH^*(BG)\) is generated by elements of degree at most \(\max\{|y_i|,\sigma(R^*)\}\).
\end{Kor}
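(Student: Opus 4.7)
The plan is to upgrade the modulo \(p\) generation statement of \thref{thm:reg} to an integral statement by exploiting the torsion bound of \thref{lem:chtors}. Let \(d\) denote the bound \(\max\{|y_i|,\sigma(R^*)\}\) from \thref{thm:reg}, which by that theorem controls the generators of \(CH^*(BG)/p\) for each prime \(p\) dividing \(\#G\). Let \(A\subseteq CH^*(BG)\) denote the subring generated by \(CH^{\leq d}(BG)\), and write \(M:=CH^*(BG)/A\) for the graded cokernel. The task is to show \(M_i=0\) for every \(i\geq 1\).

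First I would check that for every prime \(p\mid \#G\), the reduction of \(A\) modulo \(p\) surjects onto \(CH^*(BG)/p\). This is immediate: the image of \(A\) in \(CH^*(BG)/p\) is the \(\mathbb{F}_p\)-subalgebra generated by the image of \(CH^{\leq d}(BG)\), which exhausts \(CH^{\leq d}(BG)/p\), and by \thref{thm:reg} these elements already generate \(CH^*(BG)/p\) as an \(\mathbb{F}_p\)-algebra. Consequently \(M/pM=0\), i.e.\ \(pM_i=M_i\) for each such prime \(p\) and each \(i\geq 1\).

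Next I would invoke \thref{lem:chtors} to see that \(M_i\) is annihilated by \(\#G\), since it is a quotient of \(CH^i(BG)\). Writing \(\#G=\prod_{p}p^{k_p}\), the abelian group \(M_i\) splits by the Chinese remainder theorem as \(\bigoplus_{p\mid \#G}M_i[p^\infty]\) with each \(p\)-primary summand annihilated by \(p^{k_p}\). Since multiplication by \(p\) preserves the primary decomposition, the equality \(M_i=pM_i\) restricts to \(M_i[p^\infty]=pM_i[p^\infty]\); iterating \(k_p\) times yields \(M_i[p^\infty]=p^{k_p}M_i[p^\infty]=0\). Doing this for every \(p\mid \#G\) gives \(M_i=0\), and the degree zero part vanishes trivially since \(A\supseteq CH^0(BG)=\mathbb{Z}\).

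The only real subtlety is conceptual rather than technical: one must convert the collection of mod \(p\) surjectivities into a single integral surjectivity. The primary-decomposition argument above is the cleanest route, avoiding any appeal to finite generation of the individual \(CH^i(BG)\) and relying only on the universal torsion bound provided by \thref{lem:chtors}.
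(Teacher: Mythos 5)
Your argument is correct and uses exactly the same two ingredients as the paper's proof: the mod \(p\) surjectivity from \thref{thm:reg} and the \(\#G\)-torsion bound from \thref{lem:chtors}. The paper runs an element-wise induction (building a sequence \(a_0,a_1,\dots\) with \(a_n+p_na_{n+1}\in A^i\) until the accumulated product of primes equals \(\#G\)), whereas you phrase the same reasoning more cleanly as a Nakayama-style statement \(M=pM\) for the cokernel \(M\) — and in fact you could skip the primary decomposition entirely, since \(M=pM\) for all \(p\mid\#G\) gives \(M=(\#G)M=0\) directly.
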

\begin{proof}
    Let \(A^*\) be the graded subalgebra of \(CH^*(BG)\) generated by elements of degree at most \(\max\{|y_i|,\sigma(R)\}\). By \thref{thm:reg} \(A^*\to CH^*(BG)/p\) is surjective for any prime number \(p\). 
    Take \(a_0\in CH^i(BG)\) with \(i\geq 1\) and \(p_0\) an arbitrary prime number. There exists \(a_1\in CH^i(BG)\)
    such that \(a_0+p_0a_1\in A^i\). In particular \(a_0\in A^i\) if \(p_0a_1\in A^i\) 
    Suppose as an induction hypothesis that for any finite sequence of prime numbers \(p_0,\dots, p_{n-1}\)
    there exists a finite sequence \(a_0,\dots, a_n\in CH^i(BG)\)
    such that \[\left(\prod_{j=0}^{i-1}p_j\right)a_i\in A^i\text{ if }\left(\prod_{j=0}^ip_j\right)a_{i+1}\in A^i.\] As before for any prime
    number \(p_n\) there exists \(a_{n+1}\in CH^i(BG)\) such that \(a_n+p_{n}a_{n+1}\in A^i\)
    and so \(a_{n+1}\) extends our sequence such that \[\left(\prod_{j=0}^{n-1}p_j\right)a_n\in A^i\text{ if }\left(\prod_{j=0}^np_j\right)a_{n+1}\in A^i.\] By induction such a sequence exists for any finite sequence of prime numbers. We pick now a finite sequence \(p_0,\dots, p_n\) such that 
    \(\prod_{j=1}^np_j=\#G\) and by \thref{lem:chtors} \[\left(\prod_{j=1}^np_j\right)a_{n+1}=0\]
    meaning \(a_0\in A^i\).
\end{proof}
The strategy for our computations is going to be to compute low-degree parts of the Chow ring explicitly,
then check that the modulo \(p\) cycle class map injects in sufficiently high degree using \thref{thm:detect}.
If we are lucky, sufficiently high means \thref{thm:reg} kicks into effect and tells us that all
relations are given by the relations governing group cohomology. If not, \thref{thm:detect} 
tells us that at least powers of these relations still hold.

\section{Filtrations on the representation ring}\label{sec:filter}
This section will be used to introduce several filtrations on the representation ring of an affine group scheme \(G\) over \(k\).
The \(\gamma\)-filtration is very well computable and it is our main lever to achieve our computations.
The geometric filtration as introduced in \cite{KaMe} is a much closer approximation of the Chow ring that stems from the motivic Atiyah-Hirzebruch spectral sequence. It is a priori very difficult to compute. Luckily the two filtrations coincide in the zeroth and first 
steps. In all further steps, the \(\gamma\)-filtration is a subset of the geometric filtration.
The crux of this work is to compute as much as one can using the \(\gamma\)-filtration
such that the tools from the previous section take care of the rest and we never need to
directly touch the geometric filtration. For a more detailed account on \(\gamma\)-filtrations on representation rings, we recommend \cite{Ch20}. Representations will be assumed to always be of finite degree and complex. Furthermore, we will denote \(n\)-fold multiples of the trivial representation of \(G\) as just the natural number \(n\), because the unique ring homomorphism \(\mathbb{Z}\to R(G)\) identifies both.
\begin{Def}
    We define a \(\lambda\)-structure on \(R(G)\) via exterior powers
    \[\lambda^i(x):=\bigwedge_{j=1}^i x \text{ for representations } x.\]
    This can be uniquely extended to a map \(R(G)\to R(G)\)
    such that \[\lambda^n(x+y)=\sum_{i=0}^n\lambda^i(x)\lambda^{n-i}(y)\text{ for }x,y\in R(G).\]
    This gives rise to so-called \(\gamma\)-operations
    \[\gamma^i(x)=\lambda^i(x+i-1).\]
\end{Def}
There is a purely axiomatical notion of (augmented) \(\lambda\)-rings that we don't explore,
see for example \cite[Generalities]{Zi10}.
The concept of \(\gamma\)-filtrations can be generalized to this more abstract setting. 
\begin{Def}
    The augmentation ideal \(\mathrm{I}\) of \(R(G)\) is defined to be the kernel of the extended degree map
    \[\mathrm{deg}\colon R(G)\to\mathbb{Z}.\]
\end{Def}
\begin{Def}
    The \(n\)-th \(\gamma\)-ideal \(\Gamma^n\trianglelefteq R(G)\) is defined as the abelian group generated by the finite products \(\prod_i \gamma^{j_i}(x_i)\) with \(x_i\in I\) and \(\sum_i j_i\geq n\).
    The descending chain of \(\gamma\)-ideals defines the \(\gamma\)-filtration on \(R(G)\).
\end{Def}

\begin{Def}
    The associated graded to the \(\gamma\)-filtration is defined as the abelian group
    \[gr_\gamma^*R(G)=\bigoplus_{i=0}^\infty\Gamma^i/\Gamma^{i+1},\] 
    which comes equipped with a ring structure because \(\Gamma^m\Gamma^n\leq\Gamma^{mn}\).
    We call the residue class \(\gamma^i(x-\deg(x))+\Gamma^{i+1}\) the \(i\)-th Chern class
    of a representation \(x\). This gives rise to a notion of total Chern classes
    \[c_T(x)\in gr_{\gamma}^* R(G)[[T]]\] such that 
    \[c_T(x):=\sum_i c_i(x) T^i.\] 
\end{Def}

These Chern classes behave very similarly to those we already know as can be seen from the following proposition.
\begin{Prop}\thlabel{prop:chernrels}
    Chern classes in \(gr^*_\gamma R(G)\) fulfill for representations \(x,y\)
    \begin{itemize}
        \item \(c_i(x)=0\) for \(i > \deg(x)\) ,
        \item \(c_1(xy)=c_1(x)+c_1(y)\) for degree 1 representations \(x,y\),
        \item \(c_T(x+y)=c_T(x)c_T(y)\) for representations \(x,y\), which implies
        \[c_n(x+y)=\sum_{i=0}^n c_i(x)c_{n-i}(y).\]
    \end{itemize}
\end{Prop}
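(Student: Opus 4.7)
The plan is to exploit the standard multiplicative formalism for \(\gamma\)-operations, together with a splitting-principle argument for item (1). Throughout I will freely use the power series identity \(\gamma^t(x+y) = \gamma^t(x)\,\gamma^t(y)\) with \(\gamma^t(z) := \sum_i \gamma^i(z) t^i\), which is equivalent (via the substitution \(\lambda^t \mapsto \gamma^{t/(1-t)}\)) to the defining identity \(\lambda^n(x+y)=\sum_i \lambda^i(x)\lambda^{n-i}(y)\). I will also use that for any \(z \in I\) one has \(\gamma^i(z) \in \Gamma^i\) directly from the definition of the \(\gamma\)-filtration as the ideal generated by such products.

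For item (3), set \(d := \deg(x)\) and \(e := \deg(y)\), so that \((x+y) - (d+e) = (x-d) + (y-e)\) lies in \(I\). Multiplicativity of \(\gamma^t\) yields
\[
\gamma^t\bigl((x+y)-(d+e)\bigr) = \gamma^t(x-d)\,\gamma^t(y-e),
\]
and comparing coefficients of \(t^n\) gives
\[
\gamma^n\bigl((x+y)-(d+e)\bigr) = \sum_{i+j=n} \gamma^i(x-d)\,\gamma^j(y-e).
\]
Each summand on the right lies in \(\Gamma^i \Gamma^j \subseteq \Gamma^n\), and reducing modulo \(\Gamma^{n+1}\) yields \(c_n(x+y) = \sum_{i+j=n} c_i(x)\,c_j(y)\), i.e.\ \(c_T(x+y)=c_T(x)\,c_T(y)\). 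Item (2) then reduces to a one-line check: for \(x,y\) of degree one, \(xy\) is again of degree one, and
\[
(xy - 1) - \bigl((x-1)+(y-1)\bigr) = (x-1)(y-1) \in I^2 \subseteq \Gamma^2,
\]
so \(c_1(xy) = (xy-1) + \Gamma^2 = (x-1)+(y-1)+\Gamma^2 = c_1(x)+c_1(y)\).

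For item (1), the clean approach is a splitting principle. After passing to an extension of \(R(G)\) (as an augmented \(\lambda\)-ring) in which \(x\) splits as a sum \(\chi_1 + \cdots + \chi_d\) of line elements, multiplicativity of \(\gamma^t\) and the direct calculation \(\gamma^t(\chi - 1) = 1 + (\chi - 1) t\) for any line element \(\chi\) together give
\[
\gamma^t(x - d) = \prod_{j=1}^d \bigl(1 + (\chi_j - 1) t\bigr).
\]
The coefficient of \(t^i\), and hence \(c_i(x)\), is the \(i\)-th elementary symmetric polynomial in \(\chi_1-1,\dots,\chi_d-1\), which vanishes identically for \(i>d\). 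The main obstacle is justifying the splitting construction in this concrete setting; I expect this to be handled by the standard line-bundle extension for augmented \(\lambda\)-rings (cf.\ \cite{Zi10}) together with the observation that both sides of the asserted identity \(c_i(x)=0\) are functorial under \(\lambda\)-ring maps, so vanishing can be verified after the faithful extension.
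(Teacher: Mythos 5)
Your proposal is correct, but it takes a genuinely different route from the paper: the paper's entire proof of this proposition is a citation to \cite[III.\S 2]{RiRo}, whereas you supply the actual argument. Your treatment of the third and second items is clean and complete: multiplicativity of \(\gamma^t\) is indeed equivalent to the defining identity for \(\lambda^n\) on sums, each \(\gamma^i(x-d)\) lies in \(\Gamma^i\) by definition of the \(\gamma\)-filtration, and the coefficient comparison modulo \(\Gamma^{n+1}\) gives exactly the Whitney formula; the identity \((xy-1)-(x-1)-(y-1)=(x-1)(y-1)=\gamma^1(x-1)\gamma^1(y-1)\in\Gamma^2\) settles the second item. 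The one place where you lean on an un-discharged obligation is the first item, where you invoke the splitting principle and defer its justification. This is heavier machinery than needed: since \(c_i(x)\) is by definition the class of \(C_i(x)=\gamma^i(x-\deg x)=\lambda^i(x-\deg x+i-1)\), one can expand this by the \(\lambda\)-Whitney formula as \(\sum_j\lambda^j(x)\lambda^{i-j}(i-1-\deg x)\) and observe that for \(i>\deg x\) the integer \(m:=i-1-\deg x\) is non-negative with \(\lambda^k(m)=\binom{m}{k}\), so each summand vanishes: either \(j>\deg x\) kills \(\lambda^j(x)\), or \(j\le\deg x\) forces \(i-j>m\) and kills the binomial coefficient. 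Thus \(C_i(x)=0\) identically, not merely in the associated graded ring; the paper records precisely this computation in the paragraph immediately following the proposition. Your splitting-principle argument is also valid (and the paper does state the splitting principle without proof in Section 5, so relying on it is consistent with the paper's standards), but the elementary route closes the only gap you flag yourself.
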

\begin{proof}
    This has been proven in \cite[III.§2]{RiRo}.
\end{proof}
Denote \[C_i(x):=\gamma^i(x-\deg x)=\lambda^i(x-\deg x+i-1)\] with \(x\) a representation. These \(C_i(x)\)
do not behave as nicely as Chern classes but they still fulfill the Whitney sum formula, as explained in \cite[III.§1]{RiRo}. Furthermore \(C_i(x)\) vanishes for \(i>\deg(x)\) because then \(\lambda^j(i-\deg(n-1))\) vanishes for \(j\geq i-\deg(x)\) while \(\lambda^{i-j}(x)\) vanishes for \(j<i-\deg(x)\).
We will only need these facts for the following Proposition.
\begin{Prop}\thlabel{prop:gammagens}
    Let \(G\) be a finite group. Denote by \(\mathrm{Irr}(G)\) the finite set of isomorphism classes of irreducible
    representations of \(G\).
    Take \(m\) to be the maximal degree among all irreducible representations of \(G\).  
    The ideal \(\Gamma^n\) is finitely generated as an ideal
    \[\Gamma^n=\left(\prod_{i\in I} C_{\sigma(i)}(x_i)\middle| n+m>\sum_{i\in I} \sigma(i)\geq n, x_{i}\in \mathrm{Irr}(G) , I \text{ a finite set }, \sigma\colon I\to\mathbb{N}_{\leq m}\text{ a map}\right).\]
    In particular \(gr^*_\gamma R(G)\) is generated by the finitely many non-vanishing Chern classes of irreducible representations of \(G\).
\end{Prop}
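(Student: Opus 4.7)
The plan is to start from the definition of $\Gamma^n$ as the abelian subgroup generated by products $\prod_i \gamma^{j_i}(y_i)$ with $y_i \in I$ and $\sum_i j_i \geq n$, and progressively refine the set of generators until we arrive at the claimed finite one. First, I would use that $I$ is generated as an abelian group by the elements $x - \deg x$ for $x \in \mathrm{Irr}(G)$, so any $y \in I$ may be written as $\sum_k n_k(x_k - \deg x_k)$ with $x_k \in \mathrm{Irr}(G)$ and $n_k \in \mathbb{Z}$. Since $\gamma_T$ is a homomorphism $(R(G),+) \to (1 + T\,R(G)[[T]], \cdot)$ by the Whitney sum formula noted in \thref{prop:chernrels}, we obtain
\[
\gamma_T(y) = \prod_k \gamma_T(x_k - \deg x_k)^{n_k}.
\]
Extracting the coefficient of $T^{j_i}$ yields an integer polynomial expression for $\gamma^{j_i}(y_i)$ in the $C_j(x_k) = \gamma^j(x_k - \deg x_k)$, and because $\gamma_T(x_k - \deg x_k) = 1 + \sum_{j \geq 1} C_j(x_k) T^j$, each monomial in that expression has total $C$-weight exactly $j_i$, where $C_j$ carries weight $j$. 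Negative exponents are handled by inverting $1 + \sum_{j \geq 1} C_j(x_k) T^j$ via the geometric series, whose coefficients are still integer polynomials in the $C_j(x_k)$ of the correct weight.

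Second, I would multiply these expressions over $i$ to deduce that each generator $\prod_i \gamma^{j_i}(y_i)$ of $\Gamma^n$ is a $\mathbb{Z}$-linear combination of products $\prod_\ell C_{\tau(\ell)}(x_\ell)$ with $x_\ell$ irreducible and $\sum_\ell \tau(\ell) = \sum_i j_i \geq n$. As $C_{\tau(\ell)}(x_\ell)$ vanishes for $\tau(\ell) > \deg x_\ell$, I may restrict to $\tau(\ell) \leq \deg x_\ell \leq m$. This already exhibits $\Gamma^n$ as the ideal generated by products $\prod C_{\sigma(i)}(x_i)$ with $x_i \in \mathrm{Irr}(G)$, $\sigma(i) \leq m$, and $\sum \sigma(i) \geq n$.

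Third, I would impose the upper bound $\sum \sigma(i) < n + m$: given any such product with $s := \sum \sigma(i) \geq n + m$, I add the indices of $I$ one by one to an initially empty subset $I' \subseteq I$. Since each $\sigma(i) \leq m$, the partial sum increases by at most $m$ at each step, so there is a first moment at which $\sum_{i \in I'} \sigma(i)$ lands in the interval $[n, n+m)$. The original product then factors as
\[
\Bigl(\prod_{i \in I'} C_{\sigma(i)}(x_i)\Bigr) \cdot \Bigl(\prod_{i \notin I'} C_{\sigma(i)}(x_i)\Bigr),
\]
with the first factor in the restricted generating set and the second an element of $R(G)$. Finiteness of this set is then automatic: there are only finitely many variables $C_j(x)$ with $x \in \mathrm{Irr}(G)$ and $1 \leq j \leq m$, and each generator is a monomial of total weight at most $n + m - 1$ in these. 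For the "in particular" statement, I would pass to $\Gamma^n/\Gamma^{n+1}$: products with $\sum \sigma(i) > n$ vanish there, so each graded piece is spanned by products of Chern classes $c_{\sigma(i)}(x_i)$ with $x_i$ irreducible and $\sum \sigma(i) = n$, which is exactly the statement that $gr^*_\gamma R(G)$ is generated as a ring by the finitely many non-vanishing Chern classes of irreducible representations.

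The main obstacle is the first step, specifically the bookkeeping involved in expressing $\gamma^{j_i}(y_i)$ as an integer polynomial in the $C_j(x_k)$ when $y_i$ involves negative coefficients $n_k$: one has to invert the formal power series $1 + \sum_{j \geq 1} C_j(x_k)T^j$ using the geometric series, check that the resulting coefficients remain integer polynomials in the $C_j(x_k)$ of the correct total weight, and then multiply positive and negative powers together while keeping track of this weight. Once this formal manipulation is in place, the combinatorial reductions to bounded total degree and to finitely many irreducible representations are essentially bookkeeping arguments.
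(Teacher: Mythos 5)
Your proposal is correct and follows essentially the same route as the paper: both arguments rest on the multiplicativity of the total $\gamma$-class (Whitney sum formula), handle negative coefficients by inverting the power series $\gamma_T(x-\deg x)$ --- your geometric-series inversion is exactly the recursion the paper derives from $0=\gamma^{\sigma}((\deg x - x)+(x-\deg x))$ --- and then reduce to the bounded generating set by tracking the total weight of monomials in the $C_j(x)$. The only differences are presentational: you decompose elements of the augmentation ideal directly into irreducibles rather than into a difference of two actual representations, and you spell out the interval argument for the upper bound $\sum\sigma(i)<n+m$, which the paper merely asserts.
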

\begin{proof}
    Firstly we see that the generating set already generates
    \[\left(\prod_{i\in I} C_{\sigma(i)}(x_i)\middle| \sum_{i\in I} \sigma(i)\geq n, x_{i}\in\mathrm{Irr}(G) , I \text{ a finite set }, \sigma\colon I\to\mathbb{N}\text{ a map} \right).\]
    By the Whitney sum formula it even generates the ideal
    \[\left(\prod_{i\in I} C_{\sigma(i)}(x_i)\middle| \sum_{i\in I} \sigma(i)\geq n, x_{i} \text{ a representation} , I \text{ a finite set }, \sigma\colon I\to\mathbb{N}\text{ a map}\right).\]
    We are done, if we can show that this ideal also contains products of the form \(\prod_i \gamma^{\sigma(i)}(\deg(x_i)- x_i)\) for representations \(x_i\)
    as any element of the augmentation ideal \(\mathrm{I}\) is of the form \[(y-\deg(y))-(x-\deg(x))\] for some representations \(x,y\). Applying the Whitney sum formula to \[0=\gamma^{\sigma(i)}(0)=\gamma^{\sigma(i)}((\deg(x_i)- x_i)+(x_i-\deg(x_i)))\] we get
    \[\gamma^{\sigma(i)}(\deg(x_i)- x_i)=-\sum_{l=1}^{\sigma(i)} C_{l}(x_i)\gamma^{\sigma(i)-l}(\deg(x_i)- x_i)\]
    and thus by induction over \(\sigma(i)\) we can express \(\gamma^{\sigma(i)}(\deg(x_i)- x_i)\) as a sum of products
    of the form \(z\prod_{l} C_{k_l}(x_l)\) where \(z\) is some integer and the \(k_l\) sum up to \(\sigma(i)\). This proves the claim.
\end{proof}
This observation is used in \cite{GGCR}. There \cite{GAP4} gives a description of all Chern classes of irreducible representations, of which there are finitely many, and then the finitely generated \(\Gamma\)-ideals are defined in \cite{sagemath}. Afterwards, it is checked which linear combinations
of generators of \(\Gamma^n\) vanish modulo \(\Gamma^{n+1}\).
To make sure that only finitely many linear combinations are checked, we have to find an upper bound on the torsion of the generators.
\begin{Prop}
     Let \(G\) be a finite group. The abelian group \(\Gamma^n/\Gamma^{n+1}\) is \(\# G\)-torsion for \(n>0\).
\end{Prop}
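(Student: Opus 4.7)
The plan is to exploit the absorbing property of the regular representation $\mathbb{C}[G]$ in $R(G)$, namely the identity
\[ a\cdot\mathbb{C}[G] = \deg(a)\cdot\mathbb{C}[G] \quad \text{in } R(G) \quad\text{for every } a\in R(G). \]
This identity follows from a one-line character calculation (the character of $\mathbb{C}[G]$ is $\#G$ on the identity and $0$ elsewhere), or equivalently from the projection formula for induction along the inclusion of the trivial subgroup. Since characters determine complex virtual representations, the equality really holds integrally in $R(G)$.

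With this tool in hand the argument is short. Take any $a\in\Gamma^n$ with $n\ge 1$. Because $\Gamma^n\subseteq\Gamma^1=\mathrm{I}$, the degree of $a$ vanishes, so the displayed identity specializes to $a\cdot\mathbb{C}[G]=0$. Decompose
\[ \mathbb{C}[G]=\#G+r, \qquad r:=\mathbb{C}[G]-\#G, \]
and note that $r\in\mathrm{I}=\Gamma^1$ since $\deg(\mathbb{C}[G])=\#G$. Substituting,
\[ 0 = a\cdot\mathbb{C}[G] = \#G\cdot a + a\cdot r, \]
so $\#G\cdot a = -a\cdot r \in \Gamma^n\cdot\Gamma^1\subseteq\Gamma^{n+1}$, using the multiplicativity $\Gamma^m\cdot\Gamma^k\subseteq\Gamma^{m+k}$ recorded in the definition of $gr^*_\gamma R(G)$. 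This establishes $\#G\cdot\Gamma^n\subseteq\Gamma^{n+1}$, which is exactly the claim that $\Gamma^n/\Gamma^{n+1}$ is $\#G$-torsion.

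I do not anticipate a genuine obstacle: the whole argument reduces to the absorbing identity for $\mathbb{C}[G]$ and the multiplicativity of the $\gamma$-filtration, both of which are essentially free of charge. The only mildly delicate point is the decision to write $\mathbb{C}[G]$ as the sum of the integer $\#G$ and a correction term that happens to sit in $\mathrm{I}=\Gamma^1$, which is what lets the extra factor of $r$ raise the filtration degree by exactly one.
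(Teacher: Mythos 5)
Your proof is correct and complete. Note that the paper does not actually supply its own argument for this proposition --- it defers entirely to the citation \cite[Proposition 2.6]{Ch20} --- so there is no in-paper proof to compare against; your argument via the absorbing identity \(a\cdot\mathbb{C}[G]=\deg(a)\cdot\mathbb{C}[G]\), the decomposition \(\mathbb{C}[G]=\#G+r\) with \(r\in I=\Gamma^1\), and the inclusion \(\Gamma^n\cdot\Gamma^1\subseteq\Gamma^{n+1}\) is the standard self-contained proof of this fact (essentially Atiyah's). One pedantic remark: the paper's definition literally states \(\Gamma^m\Gamma^n\leq\Gamma^{mn}\), which is a typo for \(\Gamma^{m+n}\); the additive version you use is the one that actually follows from the definition of the \(\gamma\)-ideals and is what your argument needs.
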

\begin{proof}
    This has been proven in \cite[Proposition 2.6]{Ch20}.
\end{proof}
We will now define the geometric filtration on \(R(G)\) as introduced in \cite[Section 4]{KaMe}.

\begin{Def}
    Let \(X\) be a variety by which we mean an integral separated scheme of finite type over a base field \(k\). We take the class in \(K^0(X)\) given by a coherent sheaf on \(X\) to be the alternating sum of terms in a finite locally free resolution of \(X\).
    Define  \(F_{\mathrm{geom}}^iK^0(X)\leq K^0(X)\) to be the subgroup generated by classes of
    coherent sheaves on \(X\) whose support has codimension of at least \(i\).
    Take \(U/G:=(V-S)/G\) to be an approximation of \(BG\) as described in \thref{thm:defthm}
    such that \(S\) has codimension at least \(i\). By definition and \cite[Theorem 3.1]{toold} there is a restriction map 
    \(\alpha_{U/G}\colon R(G)\to K^0(BG)\to K^0(U/G).\)
    Define \(F_{\mathrm{geom}}^i\leq K^0(BG)\) to be the preimage of \(F_{\mathrm{geom}}^iK^0(U/G)\)
    under \(\alpha_{U/G}\). It is proven in \cite[Section 4]{KaMe} that this is independent of the choice of
    \(U/G\).
    This again gives rise to an associated graded ring \(gr^*_{\mathrm{geom}}R(G)\), whose graded pieces are the \(F_{\mathrm{geom}}^i/F_{\mathrm{geom}}^{i+1}\). 
\end{Def}
\begin{Prop}\thlabel{Prop:lowfilt}
    \[\Gamma^0=F_{\mathrm{geom}}^0, \Gamma^1=F_{\mathrm{geom}}^1, \Gamma^2=F_{\mathrm{geom}}^2, \Gamma^i\subseteq F_{\mathrm{geom}}^i\text{ for } i>2\]
    in particular there is a natural map \(gr^*_\gamma R(G)\to gr^*_{\mathrm{geom}}R(G)\) that is a bijection in degrees 0,1
    and a surjection in degree 2. Furthermore, if the map is a bijection in all degrees up to \(i\) then it is a surjection in degree \(i+1\).
\end{Prop}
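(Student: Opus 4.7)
My plan is to reduce the proposition to two ingredients: (a) the universal containment \(\Gamma^i\subseteq F_{\mathrm{geom}}^i\), and (b) the identification of \(F_{\mathrm{geom}}^1/F_{\mathrm{geom}}^2\) with \(CH^1(BG)=\mathrm{Pic}(BG)\) compatibly with first Chern classes. I would verify (a) via the splitting principle: after passing to an appropriate flag bundle, any \(x-\deg(x)\in I\) becomes a formal sum \(\sum_j(L_j-1)\) of line-bundle classes minus one, and \(\gamma^i(x-\deg x)\) unfolds into the \(i\)-th elementary symmetric polynomial in the \(L_j-1\). Each \(L_j-1\) lies in \(F_{\mathrm{geom}}^1\) (up to sign it is the class of the structure sheaf of the vanishing locus of a section of \(L_j\)), and since the geometric filtration is multiplicative, any degree-\(i\) monomial in such terms lies in \(F_{\mathrm{geom}}^i\).

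In degree \(0\) both ideals equal \(R(G)\); in degree \(1\) both equal the augmentation ideal \(I\), since a coherent sheaf supported in positive codimension contributes rank zero to \(K^0\). The substantive equality is \(\Gamma^2=F_{\mathrm{geom}}^2\). Here I would invoke (b) from \cite{KaMe}: together with the analogous standard isomorphism \(\Gamma^1/\Gamma^2\cong CH^1(BG)\) sending \(L-1\) to \(c_1(L)\), the two identifications fit into a commutative triangle showing that the natural map \(\Gamma^1/\Gamma^2\to F_{\mathrm{geom}}^1/F_{\mathrm{geom}}^2\) is an isomorphism. Combined with \(\Gamma^2\subseteq F_{\mathrm{geom}}^2\) this forces \(\Gamma^2=F_{\mathrm{geom}}^2\). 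The bijection in degrees \(0,1\) and surjection in degree \(2\) on the associated gradeds then follow formally from the equalities \(\Gamma^j=F_{\mathrm{geom}}^j\) for \(j\leq 2\) and the inclusion \(\Gamma^3\subseteq F_{\mathrm{geom}}^3\).

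For the ``furthermore'' clause I would isolate the inductive observation: if \(\Gamma^k/\Gamma^{k+1}\to F_{\mathrm{geom}}^k/F_{\mathrm{geom}}^{k+1}\) is a bijection for every \(k\leq i\), then \(\Gamma^j=F_{\mathrm{geom}}^j\) for every \(j\leq i+1\). Induction on \(j\): the base case is trivial, and assuming \(\Gamma^j=F_{\mathrm{geom}}^j\), the bijection in degree \(j\) combined with \(\Gamma^{j+1}\subseteq F_{\mathrm{geom}}^{j+1}\) forces \(\Gamma^{j+1}=F_{\mathrm{geom}}^{j+1}\). Once \(\Gamma^{i+1}=F_{\mathrm{geom}}^{i+1}\), the map in degree \(i+1\) is automatically surjective since it is the identity on \(\Gamma^{i+1}\) passed to the quotient by the smaller submodule \(\Gamma^{i+2}\subseteq F_{\mathrm{geom}}^{i+2}\). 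The main obstacle I foresee is input (b): this is genuinely non-formal and rests either on the motivic Atiyah-Hirzebruch spectral sequence or a direct divisor-class analysis on the approximations \(U/G\); once it is in hand, everything else is essentially bookkeeping.
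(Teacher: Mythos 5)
Your proposal is essentially correct, but note that the paper does not prove this statement at all: its ``proof'' is a bare citation of \cite[Corollaries 4.8 and 4.9]{KaMe}, so what you have written is a reconstruction of the argument in that reference rather than an alternative to anything in the text. Your reconstruction follows the standard route: $\Gamma^i\subseteq F^i_{\mathrm{geom}}$ from multiplicativity of the geometric filtration plus the splitting principle, the identifications in degrees $0$ and $1$ via the rank map, the degree-$2$ equality forced by the compatible isomorphisms $gr^1_\gamma R(G)\cong \mathrm{Hom}(G,\mathbb{C}^\times)\cong CH^1(BG)\cong gr^1_{\mathrm{geom}}R(G)$, and the purely formal induction for the ``furthermore'' clause; the last two pieces of bookkeeping are airtight as you state them. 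Two points in your sketch are looser than you acknowledge. First, in the containment $\Gamma^i\subseteq F^i_{\mathrm{geom}}$ you need to descend from the flag bundle back to $U/G$, which requires that the geometric filtration be detected after pullback to the flag bundle (via the projection formula and $\pi_*\mathcal{O}=\mathcal{O}$, with the compatibility of proper pushforward with the coniveau filtration); and you must also handle $\gamma^j$ of genuinely virtual elements $x-y$, where $\gamma^m(-(L-1))=(-1)^m(L-1)^m$ saves you but should be said. Second, $L-1\in F^1_{\mathrm{geom}}$ does not need a section of $L$: it is immediate from $F^1_{\mathrm{geom}}=\ker(\mathrm{rank})$, which you have already established. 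Neither issue is a real gap, only an omission of standard detail, and your identification of input (b) as the non-formal ingredient is the correct diagnosis.
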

\begin{proof}
    See \cite[Corollaries 4.8 and 4.9]{KaMe}.
\end{proof}

\begin{Thm}\thlabel{thm:surjgrade}
    The motivic Atiyah-Hirzebruch spectral sequence gives rise to natural surjections \(CH^i(BG)\to gr^i_{\mathrm{geom}}R(G)\) that identify Chern classes of representations with their counterparts in the image of \(gr^i_{\gamma}R(G)\to gr^i_{\mathrm{geom}}R(G)\). This gives rise to a commutative diagram of maps
    \begin{center}
    \begin{tikzcd}
        CH^i(BG)\ar[r]\ar[rr, bend left,"\cdot(i-1)!"]& gr^i_{\mathrm{geom}}R(G)\ar[r,"c_i"]& CH^i(BG).
    \end{tikzcd}
    \end{center}
    After tensoring with \(\mathbb{Z}_{(p)}\) all maps are isomorphisms in degrees up to and including \(p\).
\end{Thm}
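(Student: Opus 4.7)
The plan is to deduce the theorem from the motivic Atiyah--Hirzebruch spectral sequence applied to the smooth quasi-projective approximations $U_i/G$ of $BG$ supplied by \thref{thm:defthm}, exploiting that in any fixed codegree everything stabilizes to the corresponding invariant of $BG$.

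First I would set up, for $X = U_i/G$ with $S$ of codimension strictly greater than $i$, the motivic Atiyah--Hirzebruch spectral sequence with $E_2$-page given by the motivic cohomology of $X$ and converging to $K^0(X)$. On the diagonal of interest the $E_2$-term is $CH^q(X)$, and the induced filtration on $K^0(X)$ is precisely the geometric (codimension-of-support) filtration; this is the identification used in \cite[Section 4]{KaMe}. Since potential differentials leaving $CH^i(X)$ land in motivic cohomology groups that vanish in the relevant range, the edge map gives a surjection $CH^i(X) \twoheadrightarrow gr^i_{\mathrm{geom}} K^0(X)$. Passing to the colimit over approximations and using the map $\alpha_{U/G}\colon R(G)\to K^0(U/G)$ from the definition of $F^\bullet_{\mathrm{geom}}$ produces the natural surjection $CH^i(BG)\twoheadrightarrow gr^i_{\mathrm{geom}} R(G)$. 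Naturality of Chern classes in $K$-theory identifies the classes of representations in $gr^*_\gamma R(G)$ with their images in $gr^*_{\mathrm{geom}} R(G)$, as claimed.

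Next I would invoke the Riemann--Roch theorem without denominators: for a coherent sheaf $\mathcal{F}$ on a smooth variety $X$ whose support has codimension at least $i$, the $K$-theoretic Chern classes satisfy $c_j([\mathcal{F}]) = 0$ for $1 \le j < i$, while $c_i([\mathcal{F}]) = (-1)^{i-1}(i-1)! \cdot \mathrm{cyc}(\mathcal{F})$ in $CH^i(X)$. Applying this to structure sheaves of codimension-$i$ subvarieties shows that the composite $CH^i(X) \to gr^i_{\mathrm{geom}} K^0(X) \xrightarrow{c_i} CH^i(X)$ is multiplication by $\pm(i-1)!$, yielding the commutative triangle after passing to $BG$ (the sign is absorbed into a unit).

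Finally, for $i \le p$ the integer $(i-1)!$ is a unit in $\mathbb{Z}_{(p)}$, so the composite $CH^i \to gr^i_{\mathrm{geom}} \to CH^i$ is an isomorphism after $\mathbb{Z}_{(p)}$-tensoring. Since the first map is already surjective it must also be injective, hence an isomorphism, and then $c_i$ is its inverse up to a unit and likewise an isomorphism. The case $i=1$ needs no localization and recovers \thref{prop:deg1}. The main obstacle is the identification of the spectral-sequence filtration on $K^0(X)$ with the geometric filtration of the text; this comparison is essentially the content of \cite[Section 4]{KaMe} and is what makes the whole argument go through by citation rather than by a ground-up construction.
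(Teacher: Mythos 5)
Your sketch is correct and is essentially the standard argument: the paper itself offers no proof beyond the citation to \cite[Theorem 2.25]{To14}, and what you outline (the edge map of the motivic spectral sequence onto the graded pieces of the coniveau filtration, Riemann--Roch without denominators giving the composite as $(-1)^{i-1}(i-1)!$, and localization at $p$ to upgrade surjectivity to bijectivity for $i\le p$) is precisely how that theorem is proved there. The only points you compress --- the identification of the spectral-sequence filtration with the geometric filtration and the compatibility of the Chow-theoretic Chern classes with the $\gamma$-filtration classes --- are the ones you correctly flag as carried by the references, so nothing is missing.
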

\begin{proof}
    See \cite[Theorem 2.25]{To14}.
\end{proof}
In the following Lemma we will be a bit sloppy. It deals with motivic power operations
whose natural domain are motivic cohomology groups. It is explained in \cite[Lemma 15.11]{To14} how to makes sense of them, but as we will not deal with this map explicitly, we can ignore the details. Later we will only need the existence of a
map of abelian groups as described in the following.
\begin{Lem}\thlabel{lem:surjgrade3}
    Suppose \(G\) is a \(p\)-group then 
    \[CH^{p+1}(BG)/\beta P^1 H^3(BG,\mathbb{Z})=gr^{p+1}_{\mathrm{geom}} R(G)\]
    where \(P^1\) is the first motivic power operation and \(\beta\) the motivic Bockstein.
\end{Lem}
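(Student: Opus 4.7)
The plan is to identify the kernel of the edge-map surjection $CH^{p+1}(BG)\twoheadrightarrow gr^{p+1}_{\mathrm{geom}}R(G)$ produced by \thref{thm:surjgrade} via the motivic Atiyah--Hirzebruch spectral sequence, exactly in the spirit of \cite[Lemma 15.11]{To14}. Since $G$ is a $p$-group, \thref{lem:chtors} makes $CH^{p+1}(BG)$ $p$-power torsion, so the whole argument may be carried out $p$-locally and only $p$-primary differentials of the AHSS can contribute.

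I would place $CH^{p+1}(BG)=H^{2p+2,p+1}_{\mathrm{mot}}(BG,\mathbb{Z})$ on the $E_2$-page and enumerate the differentials that can land in this slot. By \thref{thm:surjgrade} the edge map is already an isomorphism in Chow degrees up to and including $p$, so no differentials originating below Chow degree $p+1$ from lower Chow degrees can affect higher degrees in a way that invalidates the setup, and the first possibly nontrivial differential entering Chow degree $p+1$ is $d_{2p-1}$. Modulo $p$ this differential is identified with the Milnor primitive $Q_1=\beta P^1-P^1\beta$ in the motivic Steenrod algebra, and matching bidegrees forces its source to be $H^{3,2}_{\mathrm{mot}}(BG,\mathbb{F}_p)$. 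One then has to verify that all higher differentials $d_r$ with $r>2p-1$ into the same slot vanish for bidegree or weight reasons; this is the careful bookkeeping carried out in \cite[Lemma 15.11]{To14}.

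To promote the mod-$p$ statement to the integral one, observe that for $x\in H^3(BG,\mathbb{Z})$ the mod-$p$ reduction $\bar x$ satisfies $\beta\bar x=0$, so $Q_1\bar x=\beta P^1\bar x$. Thus $\beta P^1$ already lifts to an operation $H^3(BG,\mathbb{Z})\to CH^{p+1}(BG)$, and its image is precisely the kernel of the edge map in Chow degree $p+1$. This yields the claimed equality
\[CH^{p+1}(BG)/\beta P^1H^3(BG,\mathbb{Z})=gr^{p+1}_{\mathrm{geom}}R(G).\]

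The main obstacle is the vanishing of higher differentials hitting the $(p+1)$-Chow slot: this cannot be seen from total-degree counting alone and requires tracking motivic weights. I would hand this step off to Totaro's motivic-AHSS analysis in \cite[Lemma 15.11]{To14} rather than redo the details, since the sloppiness warning in the statement about motivic power operations indicates the author already intends to invoke that reference.
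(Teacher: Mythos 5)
Your proposal is correct and takes essentially the same route as the paper: the paper's entire proof of \thref{lem:surjgrade3} is the citation to \cite[Lemma 15.11]{To14}, and your sketch of the motivic Atiyah--Hirzebruch mechanism (edge surjection from \thref{thm:surjgrade}, kernel generated by the first interesting differential identified with \(\beta P^1\) on \(H^3\), higher differentials deferred to Totaro) is just an expanded gloss on that same reference. The minor imprecisions in your sketch (the exact index of the first differential and the sign/ordering convention for \(Q_1\)) are immaterial since the substantive content is, in both cases, outsourced to loc.\ cit.
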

\begin{proof}
    See \cite[Lemma 15.11]{To14}.
\end{proof}

\section{Universal polynomials}\label{sec:poly}
We need one more tool in our arsenal to make our computations more accessible. In practice computing \(gr^*_\gamma R(G)\) is so involved that it has to be done by a computer, thus obfuscating concrete steps. In an ideal world all steps
should be human readable. The advantage of formulating our calculation steps using universal polynomials is thus twofold:
we make our computations more accessible to humans and we give meaning to the computed relations. 

\begin{Thm}[Splitting principle]
    Let \(G\) be a group and \(R(G)\) its representation ring. There exists an extension of \(\lambda\)-rings \(R'\) over \(R(G)\) such that \(gr_\gamma^*R(G)\to gr_\gamma^*R'\)
    is an injection and every element of \(R(G)\) decomposes into a direct sum of line elements in \(R'\) where \(x\in R'\) is called a line element if \(\lambda^n(x)\) vanishes for \(n>1\).
\end{Thm}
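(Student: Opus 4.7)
The plan is to follow the classical construction of the splitting principle for $\lambda$-rings, as in Atiyah--Tall or Fulton--Lang, by iteratively adjoining formal ``Chern roots'' for each representation. It suffices to handle a single element at a time and then iterate, since filtered colimits exist in the category of $\lambda$-rings and commute with the formation of the $\gamma$-filtration.

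For the single-element step, given $x \in R(G)$ of degree $n$, I would construct a $\lambda$-ring extension $R(G) \hookrightarrow R_x$ as follows. Form the polynomial ring $R(G)[t_1, \ldots, t_n]$ and equip it with the unique $\lambda$-structure extending that of $R(G)$ and making each $t_i$ a line element, i.e.\ $\lambda^k(t_i) = 0$ for $k \geq 2$. Then set
\[
R_x := R(G)[t_1, \ldots, t_n] \bigl/ \bigl(\lambda^i(x) - e_i(t_1, \ldots, t_n) : 1 \leq i \leq n\bigr),
\]
where $e_i$ denotes the $i$-th elementary symmetric polynomial. By construction $x = t_1 + \cdots + t_n$ in $R_x$, and by the fundamental theorem of symmetric polynomials $R_x$ is free over $R(G)$ of rank $n!$, so $R(G) \hookrightarrow R_x$ is injective. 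One checks that the defining ideal is stable under the $\lambda$-operations (using the universal identities that express $\lambda^k(e_i(t_j))$ in terms of the $e_j$), so the $\lambda$-structure descends.

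Iterating over a well-ordered generating set of $R(G)$ (for finite $G$, the finitely many irreducibles suffice) and passing to the filtered colimit produces a $\lambda$-ring $R'$ containing $R(G)$ in which every element of $R(G)$ decomposes into a sum of line elements.

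The main obstacle is verifying the injectivity $\mathrm{gr}^*_\gamma R(G) \to \mathrm{gr}^*_\gamma R'$, equivalently that $\Gamma^m R' \cap R(G) = \Gamma^m R(G)$ for each $m$. For a single extension $R(G) \hookrightarrow R_x$, I would exhibit an $R(G)$-linear retraction given by symmetrization in the $t_i$ combined with expressing the symmetric polynomials via the fundamental theorem as polynomials in the $e_i = \lambda^i(x)$. Since the $\lambda^i(x)$ already lie in $R(G)$ and the $e_i$ carry $\gamma$-degree $\geq i$ matching the assignment in $R(G)$, the retraction is a filtered $R(G)$-module map, which forces $\Gamma^m R_x \cap R(G) \subseteq \Gamma^m R(G)$. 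The reverse inclusion is automatic. This property is stable under filtered colimits, yielding the desired injection of associated graded rings.
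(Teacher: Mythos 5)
The paper does not actually prove this statement: it is quoted as a classical fact about \(\lambda\)-rings (of the kind found in \cite{RiRo} or in Atiyah--Tall), so there is no in-paper argument to compare yours against line by line. Your construction is the standard one --- adjoin line elements \(t_1,\dots,t_n\), impose \(\lambda^i(x)=e_i(t)\), use freeness of rank \(n!\) of the splitting algebra to get injectivity of \(R(G)\to R_x\), and iterate over the irreducibles --- and the outline is sound. One caveat on the statement itself: the verification that \((\lambda^i(x)-e_i(t))\) is a \(\lambda\)-ideal uses \(\lambda^j(x)=0\) for \(j>n\), so the single-element step applies to actual representations only; consequently the conclusion, read literally, holds for effective classes, while virtual classes merely become \(\mathbb{Z}\)-linear combinations of line elements in \(R'\). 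That is the sense in which the theorem is used throughout the paper, so this is a harmless sharpening rather than an error, but it is worth making explicit.

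The one place where your sketch papers over real work is the filtration statement. Symmetrization over \(S_n\) is not an \(R(G)\)-linear retraction unless you divide by \(n!\), which is unavailable integrally; the correct retraction is projection onto the coefficient of \(1\) in the basis \(\prod_i(t_i-1)^{a_i}\), \(0\le a_i\le n-i\), and your appeal to the fundamental theorem of symmetric functions only controls the \emph{symmetric} elements of \(\Gamma^mR_x\), not a general one. To see that this projection carries \(\Gamma^mR_x\) into \(\Gamma^mR(G)\) you need to identify the \(\gamma\)-filtration of \(R_x\) explicitly: since \(\gamma^j(t_i-1)=0\) for \(j\ge2\), one shows \(\Gamma^mR_x=\sum_k\Gamma^{m-k}R(G)\cdot J^k\) with \(J=(t_1-1,\dots,t_n-1)\), equivalently that
\[gr_\gamma^*R_x\cong gr_\gamma^*R(G)[c_1(t_1),\dots,c_1(t_n)]\big/\bigl(e_j(c_1(t_1),\dots,c_1(t_n))-c_j(x)\mid 1\le j\le n\bigr),\]
which is free of rank \(n!\) over \(gr_\gamma^*R(G)\) --- the algebraic counterpart of the Chow ring of a flag bundle. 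That freeness is what actually yields \(\Gamma^mR_x\cap R(G)=\Gamma^mR(G)\), and it (together with compatibility with the filtered colimit) should be stated and proved rather than delegated to ``symmetrization''. With that step supplied, your argument is complete and agrees with the classical proof the paper is implicitly relying on.
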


This splitting principle and \thref{prop:chernrels} are also available in the setting of Chow rings and singular cohomology which means the following statement is applicable to Chern classes in \(CH^*(BG)\) as well as \(H^*(BG,\mathbb{Z})\) and \( gr^*_\gamma R(G)\).

\begin{Prop}\thlabel{prop:basicpoly}
    Suppose \(x,y\) are representations of \(G\). There exist integer polynomials \(p_{m}^{\mathrm{mul}}, p_{m,l}^{\mathrm{ext}}\) such that
    \[c_T(x\otimes y)=\sum_i p_i^{\mathrm{mul}}(c_1(x),\dots, c_i(x),c_1(y),\dots, c_i(y))T^i\]
    and
    \[c_T(\lambda^m(x))=\sum_i p_{i,m}^{\mathrm{ext}}(c_1(x),\dots, c_i(x))T^i.\]
\end{Prop}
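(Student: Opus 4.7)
The plan is to apply the splitting principle to reduce both statements to universal symmetric-function identities in Chern roots, then to invoke the fundamental theorem on symmetric polynomials to express the resulting coefficients as integer polynomials in Chern classes.

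First I would pass to an extension $\lambda$-ring $R'\supseteq R(G)$ in which $x$ and $y$ split as sums of line elements $x=L_1+\cdots+L_n$ and $y=M_1+\cdots+M_r$, using that $gr^*_\gamma R(G)\to gr^*_\gamma R'$ is injective. Writing $\alpha_i:=c_1(L_i)$ and $\beta_j:=c_1(M_j)$ for the Chern roots, \thref{prop:chernrels} gives the factorizations $c_T(x)=\prod_i(1+\alpha_iT)$ and $c_T(y)=\prod_j(1+\beta_jT)$, so the Chern classes $c_k(x)$, $c_k(y)$ are exactly the elementary symmetric polynomials in the respective Chern roots.

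Next, since tensor products of line elements are again line elements with $c_1(L_i\otimes M_j)=\alpha_i+\beta_j$, and since $x\otimes y=\sum_{i,j}L_i\otimes M_j$ decomposes into line elements, the Whitney sum formula gives
\[c_T(x\otimes y)=\prod_{i,j}\bigl(1+(\alpha_i+\beta_j)T\bigr).\]
Analogously, using the standard splitting $\lambda^m(x)=\sum_{i_1<\cdots<i_m}L_{i_1}\cdots L_{i_m}$ into line elements with Chern roots $\alpha_{i_1}+\cdots+\alpha_{i_m}$, we get
\[c_T(\lambda^m(x))=\prod_{i_1<\cdots<i_m}\bigl(1+(\alpha_{i_1}+\cdots+\alpha_{i_m})T\bigr).\]
In each case the coefficient of $T^k$ is an integer polynomial in the Chern roots that is symmetric in the $\alpha_i$ (and, in the first case, also in the $\beta_j$). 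The fundamental theorem on symmetric polynomials then uniquely rewrites each such coefficient as an integer polynomial in the elementary symmetric polynomials, i.e. in the Chern classes $c_1(x),\dots,c_k(x)$ (and $c_1(y),\dots,c_k(y)$), and only the Chern classes up to index $k$ can appear because the $T^k$-coefficient has polynomial degree $k$ in the Chern roots.

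The main obstacle I expect is arguing that the polynomials $p_i^{\mathrm{mul}}$ and $p_{i,m}^{\mathrm{ext}}$ are genuinely universal, i.e. independent of $\deg(x)$, $\deg(y)$, and $G$. I would handle this by setting up the computation stably in the ring of symmetric functions $\mathbb{Z}[\alpha_1,\alpha_2,\dots]^{\mathrm{sym}}$ (and likewise in the $\beta_j$): the identities above make sense in this universal ring, each coefficient of $T^k$ depends only on finitely many $\alpha_i,\beta_j$, and specializing all but finitely many Chern roots to zero recovers the finite-degree case while leaving the polynomial expression in the elementary symmetric polynomials unchanged. This yields the required universal integer polynomials and, since \thref{prop:chernrels} and the splitting principle apply verbatim to Chow rings and singular cohomology, the same polynomials compute Chern classes of tensor products and exterior powers in $CH^*(BG)$ and $H^*(BG,\mathbb{Z})$.
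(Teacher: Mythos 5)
Your proposal is correct and follows exactly the route the paper takes: its proof of \thref{prop:basicpoly} is a one-line appeal to \thref{prop:chernrels}, the splitting principle, and the fundamental theorem of symmetric polynomials, which is precisely the argument you spell out in detail. The extra care you take about universality (working stably in the ring of symmetric functions) is a reasonable elaboration of what the paper delegates to the cited reference.
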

\begin{proof}
    This simply follows from \thref{prop:chernrels} together with the splitting principle and the fundamental theorem of symmetric polynomials. For more details we refer the reader to \cite[Section I.§1]{RiRo}
\end{proof}
In the following computations we will sometimes justify relations in a presentation of \(CH^*(BG)\) as coming from universal polynomials given by some tensor product \(x\otimes y\) or exterior power \(\lambda^k(x)\) of representations \(x,y\). In theses cases it is feasible to compute one of the above polynomials by hand. One can then find a second description
of \(c_i(x\otimes y)\) resp. \(c_i(\lambda^k(x))\), usually by decomposing 
the representations into irreducibles and applying the Whitney sum formula. Both descriptions of \(c_i(x\otimes y)\) resp. \(c_i(\lambda^k(x))\) have to be equal \(CH^*(BG)\) (also in \(H^*(BG,\mathbb{Z}),gr^*_\gamma R(G)\)).
\begin{Kor}\thlabel{kor:basicpoly}
    The subrings of \(CH^*(BG)\), \(H^*(BG,\mathbb{Z})\) and \( gr^*_\gamma R(G)\) generated by Chern classes of representations are generated by Chern classes of generators of \(R(G)\).
\end{Kor}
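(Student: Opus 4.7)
The plan is to show that for any representation $x$ of $G$, each Chern class $c_i(x)$ lies in the subring $R$ generated by the Chern classes $c_j(g_k)$ of a fixed ring-generating set $g_1,\dots,g_n$ of $R(G)$. The main ingredients are the Whitney sum formula $c_T(a+b) = c_T(a)c_T(b)$ from \thref{prop:chernrels} and the universal multiplication polynomial $c_T(a\otimes b) = \sum_i p_i^{\mathrm{mul}}(c_j(a),c_j(b))T^i$ from \thref{prop:basicpoly}; both hold in each of $CH^*(BG)$, $H^*(BG,\mathbb{Z})$, and $gr^*_\gamma R(G)$, so the argument proceeds uniformly in all three rings.

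First I would write $x = P(g_1,\dots,g_n)$ for some integer polynomial $P$, and split $P = P_+ - P_-$ into the sum of monomials with positive and negative coefficients, so that $P_+(g)$ and $P_-(g)$ are both honest representations of $G$ and $x + P_-(g) = P_+(g)$ holds as an equality of representations. By induction on the number of monomials and their total degree, repeatedly applying the Whitney sum formula to handle internal $+$'s and the multiplication polynomial to handle internal $\otimes$'s, one sees that $c_T(P_+(g))$ and $c_T(P_-(g))$ both lie in $R[[T]]$.

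The Whitney sum formula applied to $x + P_-(g) = P_+(g)$ then gives $c_T(x)\cdot c_T(P_-(g)) = c_T(P_+(g))$, and since $c_T(P_-(g))$ has constant term $1$ it is invertible in $R[[T]]$. Hence $c_T(x) = c_T(P_+(g))\cdot c_T(P_-(g))^{-1}$ lies in $R[[T]]$, so every $c_i(x)$ lies in $R$. The only real subtlety is that the universal multiplication polynomial of \thref{prop:basicpoly} takes actual representations as inputs, not virtual ones, so one cannot feed a difference $P_+(g) - P_-(g)$ into it directly; the device of splitting $P$ into its positive and negative parts and inverting $c_T(P_-(g))$ in the formal power series ring circumvents this cleanly, leaving only actual representations as inputs to the universal polynomials.
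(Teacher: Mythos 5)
Your proposal is correct and follows essentially the same route as the paper, whose proof is the one-line remark that the corollary follows from \thref{prop:basicpoly} together with the splitting principle: you expand a representation as a polynomial in the generators and reduce its Chern classes to those of the generators via the Whitney sum formula and the universal multiplication polynomials. Your explicit treatment of the negative coefficients --- splitting $P=P_+-P_-$ so that only honest representations enter the universal polynomials, and then inverting $c_T(P_-(g))$ in $R[[T]]$ --- is a detail the paper leaves implicit, and it is handled correctly.
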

\begin{proof}
    This follows form \thref{prop:basicpoly} together with the splitting principle.
\end{proof}

\section{The Chow ring of \(B\Syl_2(\GL(3,2))\)}\label{sec:chowh}
Before tackling the larger group of \(\Syl_2(\GL(4,2))\), we compute \(CH^*(BH)\) for \(H:=\Syl_2(\GL(3,2))\) which will
be tremendously useful in our later computations for arguments via restrictions to subgroups.
As we compute the integral Chow ring, this extends the computation of \(CH^*(BH)/2\) in \cite[Lemma 13.2]{To14}.

We first give a description of \(R(H)\) determined in \autoref{sec:reph}. This representation ring will be utilized to compute \(CH^*(BH)\).
We first explicitly give \(CH^1(BH)\cong\mathrm{Hom}(H,\mathbb{C}^\times)\).
Then we determine some relations between Chern classes that hold in \(gr_{\gamma}^2R(H)\) by examining \(R(H)\)
and via natural surjection \[gr_{\gamma}^2R(H)\to gr_{\mathrm{geom}}^2R(H)\cong CH^2(BH)\]
from \thref{thm:surjgrade} the relations also hold in \(CH^2(BH)\).
By \thref{kor:reg} the elements in degree 1 and 2, 
all of which are linear combinations of Chern classes, already generate \(CH^*(BH)\).
The relations that we have already found generate an ideal \(I\) of relations that hold in \(gr_{\mathrm{geom}}^1R(H)\) and
\(gr_{\mathrm{geom}}^2R(H)\) meaning they also hold when expressing \(CH^*(BH)\) as a quotient of the free graded \(\mathbb{Z}\)-algebra
generated by first and second Chern classes of irreducible representations. 
Using the cycle class map and the computation of
\(H^*(BH,\mathbb{F}_2)\) in \cite[Group number 3 of order 8]{cohom}
we observe that
\[gr_{\gamma}^2R(H)/2\to gr_{\mathrm{geom}}^2R(H)/2 \to CH^2(BH)/2\to H^{4}(BH,\mathbb{F}_2)\]
is injective and that the ideal \(I+(2)\) contains all relations that express \(CH^*(BH)/2\) as a quotient of the mentioned free graded \(\mathbb{Z}\)-algebra. 
By restriction to the center \(Z(H)\) we will see that the cosets that make up \(((I+(2))/I\) 
can not vanish in \(CH^*(BH)\) meaning the relations \(I\) and the first and second Chern classes as generators give rise to a presentation of \(CH^*(BH)\).

We will perform our calculations involving the \(\gamma\)-operations and universal polynomials in detail,
so that the reader gets a clear picture of this computation strategy. Later calculations
will be performed in less detail in the interest of readability.
\subsection{Representations of \(\Syl_2(\GL(3,2))\)}
We can understand \(H\) as the group of
upper triangular matrices over \(\mathbb{F}_2\) with \(1\) as diagonal entries, or as the Heisenberg group
over \(\mathbb{F}_2\). Furthermore we have \(H\cong D_8\) for \(D_8\) the dihedral group with \(8\) elements.
The complex representation theory of \(H\) is already well known, see for example \cite[Section 2]{heiGr}.
We have also worked it out in \autoref{sec:reph} with notations \(\phi:=\phi_1\) to be
 \begin{Prop}\thlabel{prop:reph2}
     \[R(H)\cong \mathbb{Z}[f_{(1,0)},f_{(0,1)},\phi]/(f_{(1,0)}^2-1,f_{(0,1)}^2-1,\phi^2-f_{(1,0)}-f_{(0,1)}-f_{(1,0)}f_{(0,1)}-1)\]
     with a \(\lambda\)-structure determined by the  \(f_{(1,0)},f_{(0,1)}\) being line elements and 
     the exterior powers \[\lambda^2(\phi)=f_{(1,0)}\otimes f_{(0,1)}, \lambda^i(\phi)=0\text{ for }i\geq 3.\]
     Here \(\phi\) is a faithful representation of degree 2 and the \(f_{({a,b})}\) with \(a,b\in\mathbb{F}_2\)
     are the elements of \(\mathrm{Hom}(H,\mathbb{C}^\times)\cong H^{\mathrm{ab}}\cong C_2^2\) and we have
     \[f_{(a,b)}\otimes f_{(a',b')}=f_{(a+a',b+b')}.\]
 \end{Prop}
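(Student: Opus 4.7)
The plan is to verify that the right-hand side gives a complete description of $R(H)$ as a $\lambda$-ring. First I would establish that $H \cong D_8$ has exactly five complex irreducibles: the four one-dimensional characters factoring through $H^{\mathrm{ab}} \cong C_2^2$ (labeled $f_{(a,b)}$ for $(a,b)\in\mathbb{F}_2^2$) together with one two-dimensional irreducible, which we call $\phi$. The count is confirmed by $\sum_\rho (\dim\rho)^2 = |H| = 8$ together with $H$ having five conjugacy classes. Faithfulness of $\phi$ follows because the kernel of $\phi$ is a normal subgroup on which every irreducible restricts to the trivial character, so any nontrivial kernel would force $\phi$ itself to factor through $H^{\mathrm{ab}}$, contradicting $\dim\phi=2$.

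Next I would derive the ring structure. The tensor product formula $f_{(a,b)}\otimes f_{(a',b')}=f_{(a+a',b+b')}$ is just the group law on $\mathrm{Hom}(H,\mathbb{C}^\times)$; in particular $f_{(1,0)}^2=f_{(0,1)}^2=1$. For $\phi^2$ a direct character calculation (using that the character of $\phi$ vanishes outside the center, where it takes values $\pm 2$) gives the inner products $\langle \chi_\phi^2, 1\rangle = \langle \chi_\phi^2, \chi_{f_{(a,b)}}\rangle = 1$ for every $(a,b)$, so
\[
\phi^2 = 1 + f_{(1,0)} + f_{(0,1)} + f_{(1,0)}f_{(0,1)}.
\]
Since $\{1, f_{(1,0)}, f_{(0,1)}, f_{(1,0)}f_{(0,1)}, \phi\}$ is a $\mathbb{Z}$-basis of $R(H)$ and the three displayed relations suffice to reduce an arbitrary word in the generators to this basis, the presentation is complete.

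For the $\lambda$-structure I would use that one-dimensional representations are automatically line elements, so nothing needs to be checked for the $f_{(a,b)}$. Since $\deg\phi=2$, the exterior powers $\lambda^i(\phi)$ vanish for $i\geq 3$ by definition. The only substantive computation is $\lambda^2(\phi)=\det\phi$, a one-dimensional representation whose kernel consists of those elements of $D_8$ acting by determinant $+1$, namely the cyclic subgroup of order $4$. This singles it out among the three nontrivial one-dimensional representations, and I would fix the labels $f_{(1,0)}, f_{(0,1)}$ precisely so that $\det\phi = f_{(1,0)}f_{(0,1)}$, making the remaining labeling a matter of convention.

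The argument has no serious obstacle; every step is a finite character-theoretic or symmetric-power calculation. The only care required is to coordinate the labeling of the $f_{(a,b)}$ with the value of $\det\phi$ so that the presentation reads exactly as stated. In fact, the general description of $R(\Syl_p(\GL(3,p)))$ given in \autoref{sec:reph} specializes directly to this proposition at $p=2$, so in the main text the proof can simply cite that computation.
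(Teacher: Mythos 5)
Your overall route is the same as the paper's: the paper proves this proposition by specializing the general-$p$ computation of $R(\Syl_p(\GL(3,p)))$ in \autoref{sec:reph} (count conjugacy classes, write down the irreducibles explicitly, check orthogonality of characters, compute products and exterior powers on characters), and your character-theoretic argument for $D_8$ is exactly that computation at $p=2$. The identification of the five irreducibles, the computation of $\phi^2$, and the identification $\lambda^2(\phi)=\det\phi=f_{(1,1)}$ via its kernel being the cyclic subgroup of order $4$ all match the paper and are correct.

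There is, however, a genuine gap in your final step, the assertion that ``the three displayed relations suffice to reduce an arbitrary word in the generators to this basis.'' They do not: the word $f_{(1,0)}\phi$ cannot be reduced to a $\mathbb{Z}$-combination of $\{1,f_{(1,0)},f_{(0,1)},f_{(1,0)}f_{(0,1)},\phi\}$ using only $f_{(1,0)}^2=f_{(0,1)}^2=1$ and $\phi^2=1+f_{(1,0)}+f_{(0,1)}+f_{(1,0)}f_{(0,1)}$. Writing $B=\mathbb{Z}[x,y]/(x^2-1,y^2-1)\cong\mathbb{Z}[C_2^2]$, the quotient $\mathbb{Z}[x,y,z]$ by those three relations is $B[z]/(z^2-u)$ with $u=1+x+y+xy$, which is free of rank $2$ over $B$ and hence free of rank $8$ over $\mathbb{Z}$, not $5$. (One only gets $(x-1)z\cdot z=(x-1)u=0$, not $(x-1)z=0$.) The missing input is the structure constant $f_{(a,b)}\otimes\phi=\phi$ --- immediate from characters, or from the fact that tensoring the unique $2$-dimensional irreducible with a character returns an irreducible of the same degree --- and the corresponding relations $(f_{(1,0)}-1)\phi$ and $(f_{(0,1)}-1)\phi$ must be adjoined for the presentation to have the right rank. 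This is not a pedantic point for this paper: \autoref{subsec:deg2h} explicitly invokes ``$\phi\otimes f=\phi$ from \thref{prop:reph2},'' so that relation is part of the intended content of the proposition and has to be established (the printed presentation omits it as well, so your write-up inherits, and then explicitly asserts, a reduction claim that fails). Everything else in your proposal goes through.
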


\subsection{Determining degree 1}\label{subsec:deg1h}
From now on we denote \(f:=f_{(1,0)}\) for better readability.
We apply the splitting principle to the exterior power \(\lambda^2(\phi)=f_{(1,1)}\) and suppose \(\phi=x_1+x_2\) splits into line elements to get 
\[\lambda^2(\phi)=\lambda^2(x_1+x_2)=\lambda^2(x_1)+\lambda^1(x_1)\lambda^1(x_2)+\lambda^2(x_2)=x_1x_2.\]
Via \thref{prop:chernrels} this gives rise to a universal polynomial in the sense of \thref{prop:basicpoly} 
\[c_1(\lambda^2(\phi))=c_1(x_1x_2)=c_1(x_1)+c_1(x_2)=c_1(x_1+x_2)=c_1(\phi).\]
We get \(c_1(f_{(1,1)})=c_1(\lambda^2(\phi))=c_1(\phi)\).
By \thref{prop:deg1} have \[CH^1(BH)\cong\mathrm{Hom}(H,\mathbb{C}^\times)\cong\langle f_{(1,0)}, f_{(1,1)}\rangle_{\mathbb{F}_2}\] and \(c_1(\phi)=c_1(f_{(1,1)})\) implies
\[CH^1(BH)=\langle c_1(f_{(1,0)}), c_1(f_{(1,1})\rangle_{\mathbb{F}_2}=\langle c_1(f), c_1(\phi)\rangle_{\mathbb{F}_2}.\]

\subsection{Some relations in degree 2}\label{subsec:deg2h}
We see \(c_2(\phi)\) is 4-torsion if and only if \(c_1(f_{(1,0)})c_1(f_{(0,1)})\) vanishes by observing
via the Whitney sum formula and the relations described in \thref{prop:reph2}
\begin{align*}
4\gamma^2(\phi-2)&=4\lambda^2(\phi-1)\\
                &=4(\lambda^2(\phi)+\lambda^1(\phi)\lambda^1(-1)+\lambda^2(-1))\\
                &=4(f_{(1,1)}-\phi+1)\\
                &=(3f_{(1,1)}-4\phi+f_{(1,0)}+f_{(0,1)}+3)+(f_{(1,1)}-f_{(1,0)}-f_{(0,1)}+1)\\
                &=-(f_{(1,1)}-\phi+1)(\phi-2)+(f_{(1,1)}-f_{(1,0)}-f_{(0,1)}+1)\\
                &=-\lambda^2(\phi-1)\lambda^1(\phi-2)+\lambda^1(f_{(1,0)}-1)\lambda^1(f_{(0,1)}-1)\\
                &=-\gamma^2(\phi-2)\gamma^1(\phi-2)+\gamma^1(f_{(1,0)}-1)\gamma^1(f_{(0,1)}-1)\\
                &\in \Gamma^3+\gamma^1(f_{(1,0)}-1)\gamma^1(f_{(0,1)}-1).
\end{align*}
We can apply the fact \(f_{(1,0)}+f_{(1,1)}=f_{(0,1)}\) with \thref{prop:chernrels} to see
\[c_1(f_{(1,0)})c_1(f_{(0,1)})=(c_1(f_{(1,1)})+c_1(f_{(1,0)}))c_1(f_{(1,0)})\]
and because we know \(c_1(\phi)=c_1(f_{(1,1)})\) we get
\[ c_1(f_{(1,0)})c_1(f_{(0,1)})=(c_1(\phi)+c_1(f))c_1(f).\]
We again apply the splitting principle to assume \(\phi=x_1+x_2\) for line elements \(x_1,x_2\)
and get via \thref{prop:chernrels} the universal polynomial in the sense of \thref{prop:basicpoly}
\begin{align*}c_2(\phi\otimes f)&=c_2(x_1 f+x_2 f)\\
        &=c_2(x_1f)+c_1(x_1f)c_1(x_2f)+c_2(x_2f)\\
        &=c_1(x_1f)c_1(x_2f)\\
        &=(c_1(x_1)+c_1(f))(c_1(x_2)+c_1(f))\\
        &=c_1(x_1)c_1(x_2)+(c_1(x_1)+c_1(x_2))c_1(f)+c_1(f)^2\\
        &=c_2(x_1+x_2)+c_1(x_1+x_2)c_1(f)+c_1(f)^2\\
        &=c_2(\phi)+c_1(\phi)c_1(f)+c_1(f)^2
\end{align*}
The tensor product \(\phi\otimes f=\phi\) from \thref{prop:reph2} and the universal polynomial above now imply that the term
\[(c_1(\phi)+c_1(f))c_1(f)=c_1(f_{(1,0)})c_1(f_{(0,1)})\]
vanishes. Hence \(c_2(\phi)\) is 4-torsion and \(c_1(\phi)c_1(f)+c_1(f)^2=0\).

\subsection{Determining a generating set}
\thref{kor:reg} and the existence of a faithful degree 2 representation gives us that \(CH^*(BH)\)
is generated by degree 1 and degree 2 elements.
By \thref{prop:gammagens} it is even generated by Chern classes of representations as \[gr_\gamma^2R(H)\to gr_{\mathrm{geom}}^2R(H)\cong CH^2(BH)\] is surjective by \thref{thm:surjgrade}. 
These classes can be assumed to be \(c_1(f),c_1(\phi),c_2(\phi)\) by \thref{kor:basicpoly} as \(f\) and \(\phi\)
generate \(R(H)\) as a \(\mathbb{Z}\)-algebra.
We can conclude from the relations found in \autoref{subsec:deg2h} and \autoref{subsec:deg1h} that \(CH^*(BH)\) is isomorphic to a quotient of
\[\mathbb{Z}[c_1(f),c_1(\phi),c_2(\phi)]/(2c_1(f), 2c_1(\phi), 4c_2(\phi),c_1(f)^2+c_1(f)c_1(\phi)).\] 

\subsection{Absence of more relations in any degree}\label{subsec:moreh}
We show that we have found all relations by examining a presentation of the \(\mathbb{F}_2\)-group cohomology of \(H\).

Such a presentation of the \(\mathbb{F}_2\)-cohomology ring of \(BH\) is given by \cite[Group number 3 of order 8]{cohom} as 
\[\mathbb{F}_2[b_{1,0},b_{1,1},c_{2,2}]/(b_{1,0}b_{1,1}),|b_{1,0}|=1,|b_{1,1}|=1,|c_{2,2}|=2.\]
The modulo 2 cycle class map as determined in \autoref{sec:cycleh} maps \(c_1(f)\mapsto b_{1,0}^2\)
or \(c_1(f)\mapsto b_{1,1}^2\) while \(c_1(\phi)\mapsto b_{1,0}^2+b_{1,1}^2\) and \(c_2(\phi)\mapsto c_{2,2}^2\). The kernel of both possible modulo 2 cycle class maps \[\mathbb{Z}[c_1(f),c_1(\phi),c_2(\phi)]\to CH^*(BG)/2\to H^*(BG,\mathbb{F}_2)\] is the ideal \((2c_1(f), 2c_1(\phi), 2c_2(\phi),c_1(f)^2+c_1(f)c_1(\phi))\). 

We have already verified, that all these terms vanish in \(CH^*(BH)/2\).
From this, we can conclude
\[CH^*(BH)/2\cong\mathbb{Z}[c_1(f),c_1(\phi),c_2(\phi)]/(2c_1(f), 2c_1(\phi), 2c_2(\phi),c_1(f)^2+c_1(f)c_1(\phi))\]
which coincides with the computation in \cite[Lemma 13.2]{To14}. We can see from Totaro's approach that we did not really need to refer to \cite[Group number 3 of order 8]{cohom}. We did so anyway as it is illustrative for our later computations, where we need to do that. 

To determine the integral Chow ring we see that
\[\frac{(2c_1(f), 2c_1(\phi), 2c_2(\phi),c_1(f)^2+c_1(f)c_1(\phi))}{(2c_1(f), 2c_1(\phi), 4c_2(\phi),c_1(f)^2+c_1(f)c_1(\phi))}\]
only consists of the cosets of \(\mathbb{F}_2\)-linear combinations of \(2c_2(\phi)^i\) for \(i\in\mathbb{N}\).
If we restrict \(c_2(\phi)^i\) to the center of \(H\), which is \(Z(H)\cong C_2\), we get a non-zero term. 
This was computed in \autoref{subsec:resh} but can also be checked via \cite{GAP4}.
The restriction above factors through the subgroup of \(H\) generated by the matrix
 \[\begin{pmatrix}1&0&1\\0&1&0\\0&0&1\end{pmatrix},\]
which is isomorphic to the cyclic group \(C_4\).
The kernel of the restriction \(CH^*(C_4)\cong \mathbb{Z}[X]/(4X)\to CH^*(C_2)\cong\mathbb{Z}[Y]/(2Y)\) is \((2X)\),
which is precisely the set of 2-torsion elements.
As such \(c_2(\phi)^i\) can not be 2-torsion.
Thus we can conclude
\[CH^*(BH)\cong\mathbb{Z}[c_1(f),c_1(\phi),c_2(\phi)]/(2c_1(f), 2c_1(\phi), 4c_2(\phi),c_1(f)^2+c_1(f)c_1(\phi))\]
and the cycle class map is bijective.

\section{The modulo 2 Chow ring of some index \(2\) subgroups \(L\leq\Syl_2(\GL(4,2))\)}\label{sec:chowl}
As an intermediate step towards \(G:=\Syl_2(\GL(4,2))\) we consider an index \(2\) subgroup \(L\leq G\) such that we can embed \(\Syl_2(\GL(3,2))\leq L\). We can take \(G\) to be the group of upper triangular \(4\times 4\) matrices over \(\mathbb{F}_2\) whose diagonal entries are all \(1\).
We will examine the centralizer of the elementary abelian subgroup of \(G\) of the form
\[\left \{\begin{pmatrix}1&0&0&x\\ 0&1&0&y\\0&0&1&0\\0&0&0&1\end{pmatrix}\middle| x,y\in\mathbb{F}_2\right\}\cong C_2^2.\]
This centralizer, denoted by \(L\), can be described as
\[\left\{\begin{pmatrix}1&0&a&e\\ 0&1&b&d\\0&0&1&c\\0&0&0&1\end{pmatrix}\middle|a,b,c,d,e\in\mathbb{F }_2\right\}\]
and is a group of order \(32\). 

We are going to determine the modulo 2 Chow ring of the group \(L\) in this section.
This will be done similarly to \autoref{sec:chowh}. 
First we describe \(R(L)\). Then degree 1 will be computed via \thref{prop:deg1}.
Some relations in degree 2 will be determined using universal polynomials. \thref{thm:reg} will tell us that \(CH^*(BL)/2\) is generated by elements of degree 1 and 2 and by \thref{thm:surjgrade} these are
generated by Chern classes. We determine the cycle class
map via the presentation of \(H^*(BL,\mathbb{F}_2)\) from \cite[Group number 27 of order 32]{cohom}
and show that it is injective in degree 2 by the previously determined relations. \thref{Lem:detect2}
then gives us that the cycle class map is injective and we get a presentation by taking the preimage of 
the relations for group cohomology as given in \cite[Group number 27 of order 32]{cohom}. 
We will omit the details of the computations of universal polynomials and the cycle class map in the interest of
readability. These steps are straightforward and exemplary calculations for them can be found in \autoref{sec:chowh}.
The very last step
will not be done by hand but can be checked in \cite{sagemath}.

\subsection{Represetations of \(L\)}
The representation ring \(R(L)\)can be obtained from \cite{GAP4} where \(L\) is given by the 
entry \((32,27)\) in the SmallGroups library.
We also computed
it in \autoref{sec:repl} with notations \[\phi_\ell=\phi_{\ell,1} \text{ for }\ell\in\{0,1,\infty\}\] to be the following.
\begin{Prop}\thlabel{prop:repl2}
    The representation ring \(R(L)\) is generated by three degree 1 representations denoted as \(f_{(1,0,0)},f_{(0,1,0)},f_{(0,0,1)}\)
    and three irreducible degree 2 representations \(\phi_0,\phi_1,\phi_\infty\).
    The multiplicative group \[\langle f_{(1,0,0)},f_{(0,1,0)},f_{(0,0,1)}\rangle\]
    acts on the irreducible degree \(2\) representations by tensor multiplication such that each \(\phi_{nk^{-1}}\) with \(n,k\in\mathbb{F}_2\) not both zero is a representative of a different orbit, each of which is of size \(2\) and \[f_{(kx,nx,y)}\otimes \phi_{nk^{-1}}= \phi_{nk^{-1}} \text{ for } x,y\in\mathbb{F}_2 \text{ arbitrary.}\]
    Furthermore for all \(n,k\in\mathbb{F}_2\) such that \(n,k\) are not both zero
    \[\phi_{nk^{-1}}^2=\sum_{x,y\in\mathbb{F}_2}f_{(kx,nx,y)},\]
    \[\phi_{\ell_1}\otimes\phi_{\ell_2}=\phi_{\ell_3}+f_{(a,b,c)}\otimes\phi_{\ell_3}\]
     with \(\ell_1,\ell_2,\ell_3\in\{0,1,\infty\}\) pairwise distinct and \(a,b,c\in\mathbb{F}_2\)
    such that \(\phi_{\ell_3}\neq f_{(a,b,c)}\otimes\phi_{\ell_3}\).
     Furthermore
     \[\lambda^2(\phi_0)=f_{(1,0,1)}, \lambda^2(\phi_1)=f_{(1,1,1)},\lambda^2(\phi_\infty)= f_{(0,1,1)}.\] 
     Here the \(f_{({a,b,c})}\) with \(a,b,c\in\mathbb{F}_2\)
     are the elements of \(\mathrm{Hom}(H,\mathbb{C}^\times)\cong H^{\mathrm{ab}}\cong C_2^3\) and we have
     \[f_{(a,b,c)}\otimes f_{(a',b',c')}=f_{(a+a',b+b',c+c')}.\]
\end{Prop}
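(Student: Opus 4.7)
The plan is to establish the claimed description of $R(L)$ by a direct character-theoretic computation, parallel to the one for $H$ referenced in \autoref{sec:reph} but now for a group of order $32$.

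First I would compute the abelianization $L^{\mathrm{ab}}$. A brief calculation with the matrix multiplication rule shows that $[L,L]$ consists precisely of the matrices with $a=b=c=0$, so $L^{\mathrm{ab}} \cong C_2^3$ generated by the classes of $a,b,c$. Since $L$ is a finite group, all one-dimensional complex characters factor through $L^{\mathrm{ab}}$, which produces exactly the eight one-dimensional representations $f_{(a,b,c)}$ indexed by $(a,b,c)\in\mathbb{F}_2^3$ together with the additive tensor rule.

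Next I would enumerate the higher-dimensional irreducibles. From $\sum n_i^2 = |L| = 32$ and the fact that $|L/Z(L)|=8$, every non-linear irreducible has degree $2$, and there must be exactly $(32-8)/4=6$ such irreducibles. To realize them concretely I would pick an abelian normal subgroup $A \trianglelefteq L$ of index $2$ and induce one-dimensional characters of $A$ up to $L$. Mackey's criterion, together with the action of $L/A$ on $\hat A$, produces six degree $2$ irreducibles and groups them into three orbits under the translation action of $L^{\mathrm{ab}}$. I would then label these orbits $\phi_0,\phi_1,\phi_\infty$ according to the stabilizer calculation, which simultaneously yields the identities $f_{(kx,nx,y)}\otimes \phi_{nk^{-1}} = \phi_{nk^{-1}}$ and the parametrization by $nk^{-1}$.

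The remaining multiplicative relations, namely the expansion of $\phi_\ell^2$ and the product $\phi_{\ell_1}\otimes\phi_{\ell_2}$ for distinct $\ell_i$, are then verified by writing down the induced characters on a list of conjugacy class representatives of $L$ and applying the standard inner product to decompose both sides. For the $\lambda$-structure, each $\lambda^2(\phi_\ell)$ is one-dimensional since $\deg(\phi_\ell)=2$, so it coincides with some $f_{(a,b,c)}$; evaluating $\chi_{\lambda^2 V}(g) = \tfrac{1}{2}(\chi_V(g)^2 - \chi_V(g^2))$ on preimages of the generators of $L^{\mathrm{ab}}$ pins down the triple $(a,b,c)$ for each $\ell$.

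The main obstacle is not conceptual but purely organizational: selecting consistent conjugacy class representatives, fixing a parametrization of the six degree $2$ irreducibles compatible with the labeling $\{0,1,\infty\}$, and tracking which $f_{(a,b,c)}$ appears in each product so the stated identities come out uniformly. This bookkeeping is the content of \autoref{sec:repl}, and since $L$ appears as the entry $(32,27)$ in the SmallGroups library, every step can be cross-verified against \cite{GAP4}.
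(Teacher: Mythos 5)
Your outline is correct and would establish the Proposition, but it takes a genuinely different route from the paper's. The paper (\autoref{sec:repl}) first counts irreducibles by counting conjugacy classes with Burnside's lemma (obtaining \(2p^3-p\) for general \(p\), i.e.\ \(14\) for \(p=2\)), and then constructs the six degree \(2\) irreducibles not by induction but by \emph{inflation}: it exhibits normal subgroups \(N_{n,k}\trianglelefteq L\) with \(L/N_{n,k}\cong H=\Syl_p(\GL(3,p))\) and pulls back the faithful degree \(p\) irreducibles of \(H\) already computed in \autoref{sec:reph}. You instead count via \(\sum_i n_i^2=32\) together with \(\chi(1)^2\le [L:Z(L)]=8\), and construct the degree \(2\) irreducibles by inducing characters of the index \(2\) abelian normal subgroup (the paper's \(C_2^{4,L}\cong C_2^4\)) and applying Mackey's criterion. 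Both constructions are standard and both end in the same place: orthogonality of the resulting character list certifies completeness, and the ring and \(\lambda\)-structure are read off on characters. The trade-off is that the paper's inflation picture imports the identities \(\phi_\ell^{\otimes 2}\) and \(\lambda^2(\phi_\ell)\) essentially for free from the corresponding computations in \(R(H)\) (only the cross-products \(\phi_{\ell_1}\otimes\phi_{\ell_2}\) for distinct kernels require new character work), and it is set up to run uniformly in \(p\); your Clifford-theoretic picture makes irreducibility and the orbit/stabilizer structure under twisting by \(\mathrm{Hom}(L,\mathbb{C}^\times)\) immediate, and the projection formula \(\mathrm{Ind}(\chi)\otimes V\cong\mathrm{Ind}(\chi\otimes\mathrm{Res}\,V)\) would likewise shortcut most of the product decompositions, at the cost of the bookkeeping you acknowledge in matching the six induced modules to the labels \(\phi_0,\phi_1,\phi_\infty\) and the fractions \(nk^{-1}\).
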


\subsection{Determining degree 1}
For easier reading we denote \[A:=\phi_{0,1}, B:=\phi_{1,1}, C:=\phi_{\infty,1}.\] 
By \thref{prop:deg1} we have 
\[CH^1(BL)\cong\mathrm{Hom}(L,\mathbb{C}^\times)\cong\langle f_{(1,0,1)}, f_{(1,1,1)},f_{(0,1,1)}\rangle_{\mathbb{F}_2}.\]
Similarly to \autoref{subsec:deg1h}, the universal polynomials coming from exterior powers
\[f_{(1,0,1)}=\lambda^2(A), f_{(1,1,1)}=\lambda^2(B), f_{(0,1,1)}=\lambda^2(C)\] 
give us \(c_1(f_{(1,0,1)})=c_1(A), c_1(f_{(1,1,1)})=c_1(B), c_1(f_{(0,1,1)})=c_1(C)\)
and thus \[CH^1(BL)=\langle c_1(A), c_1(B), c_1(C) \rangle_{\mathbb{F}_2}.\]
\subsection{Some relations in degree 2}

Similarly to \autoref{subsec:deg2h} the universal polynomials coming from the second Chern classes of tensor products 
\begin{gather*}f_{(0,1,1)}\otimes A=f_{(1,1,1)}\otimes A,\\ f_{(1,0,1)}\otimes C=f_{(1,1,1)}\otimes C\end{gather*} 
tell us
\begin{gather*}
    c_1({f_{(0,1,1)}})^2+c_1({f_{(0,1,1)}})c_1(A)=c_1({f_{(1,1,1)}})^2+c_1({f_{(1,1,1)}})c_1(A),\\
    c_1({f_{(1,0,1)}})^2+c_1({f_{(1,0,1)}})c_1(C)=c_1({f_{(1,1,1)}})^2+c_1({f_{(1,1,1)}})c_1(C)
\end{gather*}
in \(CH^2(BL)\).
Together with the facts
\begin{gather*}c_1(f_{(1,0,1)})=c_1(A),c_1(f_{(1,1,1)})=c_1(B), c_1(f_{(0,1,1)})=c_1(C)\end{gather*}
we get that the terms
\begin{gather*}
    c_1(A)c_1(B)+c_1(B)^2+c_1(A)c_1(C)+c_1(C)^2,\\
    c_1(A)^2 + c_1(B)^2 + c_1(A)c_1(C)+c_1(B)c_1(C)
\end{gather*}
vanish in \(CH^*(BL)/2\).

\subsection{Determining a generating set}
The kernel of the degree 4 representation \(A+B\) is the subgroup \(L_0\cap L_1=1\)
meaning the representation is faithful. By \thref{thm:faithgens}
we get that \(CH^*(BL)\) is generated by elements of bounded degree as a module over
the \(\mathbb{Z}\)-algebra generated by Chern classes of \(A+B\).
Using the Whitney sum formula \(CH^*(BL)\) is then generated by elements of bounded degree as a module over
the \(\mathbb{Z}\)-algebra generated by the first and the second Chern classes of \(A\) and \(B\).
By \thref{kor:reg} 
we already now know that the Chow ring is generated by elements
which all have degree at most 2. These elements are Chern classes by \thref{thm:surjgrade} and \thref{prop:gammagens}.
They can be assumed to be \[c_1(A), c_2(A),c_1(B),c_2(B),c_1(C),c_2(C)\] via \thref{kor:basicpoly} as all first Chern classes of degree 1 representations are generated
by the classes \(c_1(A), c_1(B), c_1(C)\) and \(R(L)\) is generated by \(A,B,C\) and the degree 1 representations.
\subsection{Absence of more relations in any degree}

We get a presentation of the \(\mathbb{F}_2\)-group cohomology of \(L\) from \cite[Group number 27 of order 32]{cohom}
\[H^*(BL,\mathbb{F}_2)\cong\frac{\mathbb{F}_2[b_{1,0},b_{1,1},b_{1,2},b_{2,4},c_{2,5},c_{2,6}]}{(b_{1,0}b_{1,1}, b_{1,0}b_{1,2}, b_{1,0}b_{2,4}, b_{2,4}b_{1,1}b_{1,2}+b_{2,4}^2+c_{2,6}b_{1,1}^2+c_{2,5}b_{1,2})}\]
with
\(b_{1,0}, b_{1,1}, b_{1,2}\) of degree 1 and \(b_{2,4}, c_{2,5}, c_{2,6}\) of degree 2.
The modulo 2 cycle class map of \(BL\) is determined in \autoref{sec:cyclel} to be either
\begin{gather*}
    c_1(A)\mapsto b_{1,0}^2+b_{1,1}^2, c_1(B)\mapsto b_{1,0}^2+b_{1,1}^2+b_{1,2}^2, c_1(C)\mapsto b_{1,0}^2+b_{1,2}^2\\
    c_2(A)\mapsto c_{2,5}^2, c_2(B)=c_{2,5}^2+b_{2,4}^2+c_{2,6}^2, c_2(C)\mapsto c_{2,6}^2
\end{gather*}
or
\begin{gather*}
    c_1(A)\mapsto b_{1,0}^2+b_{1,2}^2, c_1(B)\mapsto b_{1,0}^2+b_{1,1}^2+b_{1,2}^2, c_1(C)\mapsto b_{1,0}^2+b_{1,1}^2\\
    c_2(A)\mapsto c_{2,6}^2, c_2(B)=c_{2,5}^2+b_{2,4}^2+c_{2,6}^2, c_2(C)\mapsto c_{2,5}^2.
\end{gather*}

The kernels of both possible cycle class maps \(CH^2(BL)/2\to H^4(BL,\mathbb{F_2})\) coincide to be the abelian group generated by
\begin{gather*}
    c_1(A)c_1(B)+c_1(B)^2+c_1(A)c_1(C)+c_1(C)^2,\\
    c_1(A)^2 + c_1(B)^2 + c_1(A)c_1(C)+c_1(B)c_1(C).
\end{gather*}
We have already seen that these terms are zero in \(CH^2(BL)/2\).
Thus the cycle class map modulo 2 is injective in degree 2.
Note now that all centralizers in \(L\) of elementary abelian subgroups of \(L\) are either
elementary abelian or \(L\). Thus, invoking \thref{Lem:detect2}, the modulo 2 cycle class map is injective.

We get that \(CH^*(BL)/2\) is isomorphic to the subring of 
\(H^{2*}(BL,\mathbb{F}_2)\) generated by Chern classes.
Using \cite{sagemath} and the previous presentation we can determine this subring to be \[\mathbb{F}_2[c_i(A),c_i(B),c_i(C)\mid i=1,2]/I\] 
where \(I\) is the ideal generated by relations
in degree 2:
\begin{gather*}
    c_1(A)c_1(B)+c_1(B)^2+c_1(A)c_1(C)+c_1(C)^2,
\end{gather*}
\begin{gather*}
    c_1(A)^2 + c_1(B)^2 + c_1(A)c_1(C)+c_1(B)c_1(C),
\end{gather*}
in degree 3:
\begin{gather*}
    c_1(A)c_2(A)+c_1(B)c_2(A)+c_1(C)c_2(A)+c_1(A)c_2(B)\\ 
    +c_1(B)c_2(B)+c_1(C)c_2(B)+c_1(A)c_2(C) + c_1(B)c_2(C)+c_1(C)c_2(C),
\end{gather*}
in degree 4:
\begin{gather*}
    c_1(B)c_1(C)c_2(A)+c_1(A)c_1(C)c_2(B)+c_1(B)^2c_2(C)+ c_1(A)c_1(C)c_2(C) \\
    + c_1(C)^2c_2(C) +c_2(A)^2 + c_2(B)^2 + c_2(C)^2.
\end{gather*}

\section{The modulo 2 Chow ring of \(\Syl_2(\GL(4,2))\)}\label{sec:chowg}
Denote \(G:=\Syl_2(\GL(4,2))\) which we understand as the group of upper triangular \(4\times 4\) \(\mathbb{F}_2\)-matrices
whose diagonal entries are all 1..
The goal of this section is to show the following statement
and apply it to compute \(CH^*(BG)/2\), which is our main theorem.
\begin{Thm}\thlabel{thm:inj}
   The cycle class map 
\[CH^*(BG)/2\to H^{2*}(BG,\mathbb{F}_2)\] is injective. 
\end{Thm}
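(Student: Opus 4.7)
The group \(G\) has a faithful complex representation of degree \(n = 4\) (coming from the defining representation of \(\GL(4,\mathbb{F}_2)\), lifted to \(\GL(4,\mathbb{C})\)), and its center \(Z(G) \cong C_2\) is generated by the elementary matrix \(I + E_{1,4}\); thus \(c = 1\) and \(n - c = 3\). The plan is to combine \thref{Lem:detect2} (which will give injectivity in degrees greater than \(3\)) with a direct computation in degrees at most \(3\).

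For the high-degree range, I would verify the hypothesis of \thref{Lem:detect2}: namely, that \(CH^*(BC_G(V))/2\) contains no nilpotents for every elementary abelian subgroup \(V \leq G\) not contained in \(Z(G)\). Using \cite{GAP4} one enumerates the conjugacy classes of such \(V\) and computes their centralizers. Each centralizer should either be elementary abelian itself (in which case \(CH^*(BC_G(V))/2\) is a polynomial ring by \thref{lem:elab}, hence reduced), or be conjugate to the group \(L\) studied in \autoref{sec:chowl}. In the latter case I would invoke the explicit presentation of \(CH^*(BL)/2\) determined there; the fact that this ring injects into the Chern subring of \(H^*(BL,\mathbb{F}_2)\), together with a \cite{sagemath} verification that the resulting quotient is reduced, settles the no-nilpotents condition.

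For degrees \(1 \leq i \leq 3\), I would mimic the strategy already employed for \(H\) and \(L\). Degree \(1\) is automatic by \thref{prop:deg1}. For degrees \(2\) and \(3\), I would compute \(gr^i_\gamma R(G)\) explicitly from the presentation of \(R(G)\) in \autoref{sec:repg} together with the universal polynomials of \autoref{sec:poly} applied to tensor products and exterior powers of irreducible representations, and compare with \(H^{2i}(BG,\mathbb{F}_2)\) via the presentation of the \(\mathbb{F}_2\)-cohomology from \cite{cohom} and the explicit form of the modulo 2 cycle class map computed in \autoref{sec:cycle}. Injectivity should then follow by a direct match of generating Chern classes and low-degree relations, exactly as in the argument for \(L\); where a mixed term cannot be ruled out directly, restriction to \(H\) or \(L\) (whose cycle class maps were already shown to be injective in \autoref{sec:chowh} and \autoref{sec:chowl}) should detect it.

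The main obstacle will be the no-nilpotents verification for \(CH^*(BL)/2\): the presentation from \autoref{sec:chowl} carries several degree-3 and degree-4 relations, so being reduced is not visible by inspection and will require an automated Gr\"obner-basis style check in \cite{sagemath}. By contrast, enumerating the non-central elementary abelian subgroups of \(G\) and identifying each centralizer as either elementary abelian or conjugate to \(L\) is routine with \cite{GAP4}, and the low-degree universal-polynomial computations extend those already made for \(H\) and \(L\) with only bookkeeping complications.
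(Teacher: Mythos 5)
Your overall skeleton coincides with the paper's: degree 1 via \thref{prop:deg1}, degrees 2 and 3 by direct computation, and degrees above \(n-c=3\) via \thref{Lem:detect2} after classifying centralizers of non-central elementary abelian subgroups. One small slip in that last step: besides elementary abelian groups and copies of \(L\), the centralizers also include direct products \(C_2\times\Syl_2(\GL(3,2))\). This costs nothing --- by \thref{lem:elab} and the presentation of \(CH^*(B\Syl_2(\GL(3,2)))/2\) these rings are reduced --- but your enumeration is incomplete as stated.

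The genuine gap is in degree 3. You say injectivity there ``should follow by a direct match of generating Chern classes and low-degree relations, exactly as in the argument for \(L\)'', but the two situations are not parallel: \(Z(L)\) has rank \(2\), so for \(L\) the detection lemma already covers every degree \(\geq 3\) and only degree \(2\) required a hands-on argument, whereas for \(G\) degree \(3\) sits below the detection threshold. Computing \(gr^3_\gamma R(G)\) and comparing with \(H^6(BG,\mathbb{F}_2)\) only controls \(gr^3_{\mathrm{geom}}R(G)\); the map \(CH^3(BG)\to gr^3_{\mathrm{geom}}R(G)\) of \thref{thm:surjgrade} is merely surjective in this range, so relations verified in the \(\gamma\)-filtration need not hold in \(CH^3(BG)\), and you have no a priori handle on the kernel (nor on whether \(CH^3(BG)\) is even generated by Chern classes). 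The paper closes this with \thref{lem:surjgrade3}: the kernel of \(CH^3(BG)\to gr^3_{\mathrm{geom}}R(G)\) is a quotient of \(H^3(BG,\mathbb{Z})\cong(\mathbb{Z}/2\mathbb{Z})^4\), hence of dimension at most \(4\); it then produces a space of Chern-class expressions vanishing in \(gr^3_\gamma R(G)\) whose images in \(CH^3(BG)\) are seen to be \(4\)-dimensional by restriction to \(H_0,H_\infty,I_0,I_\infty,C_2^{2,G}\), so the bound is attained, \(gr^3_\gamma R(G)=gr^3_{\mathrm{geom}}R(G)\), and \(CH^3(BG)\) is pinned down exactly (\thref{lem:chgdeg3}). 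Without this counting argument, or some substitute controlling \(CH^3(BG)\) beyond its associated graded image, your degree-3 step does not go through.
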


Our proof of \thref{thm:inj} will roughly abide to the following structure:
\begin{itemize}
    \item We will give a description of \(R(G)\) and some restriction maps
    to certain subgroups isomorphic to \(\Syl_2(\GL(3,2))\) and one subgroup isomorphic to \(C_2^2\).
    
    \item We compute \(CH^1(BG)\cong H^2(BG,\mathbb{Z})\) as the group of degree 1 representations of \(G\). 
    
    \item We identify relations in \(gr_\gamma^2(R(G)\) with respect to some Chern classes by using 
    universal polynomials and the \(\gamma\)-filtration on \(R(G)\). \thref{thm:surjgrade} 
    then tells us that these relations also hold in \(gr^2_{\mathrm{geom}}R(G)\cong CH^2(BG)\).
    
    \item Next we want to show
    \[CH^2(BG)\cong gr^2_{\mathrm{geom}} R(G)\cong gr^2_\gamma R(G).\] 
    We will introduce in \thref{prop:resg2} the subgroups \(H_0,H_\infty,I_0,I_\infty\leq G\) all
    of which are isomorphic to \(\Syl_2(\GL(3,2))\). We have seen in \autoref{sec:chowh}
    \[CH^2(B\Syl_2(\GL(3,2)))\cong gr^2_{\mathrm{geom}} R(\Syl_2(\GL(3,2)))\cong gr^2_\gamma R(\Syl_2(\GL(3,2)).\]
    We verify that we have found all relations in \(CH^2(BG)\) by 
    checking that they coincide with the kernel of the map
    \[A^2\to\prod_{H'\in H_0,H_\infty,I_0,I_\infty}  CH^2(H')\]
    where \(A^2\) is the free abelian group generated by second
    Chern classes and twofold products of first Chern classes.
    Consequently
    \[CH^2(BG)\cong gr^2_{\mathrm{geom}} R(G)\cong gr^2_\gamma R(G).\] 
    
    \item For the \(H'\) as above we have seen in \autoref{sec:chowh} 
    that \(CH^2(BH')/2\to H^4(BH',\mathbb{F}_2)\) is injective and
    we get from the previous step that \(CH^2(BG)/2\to H^4(BG,\mathbb{F}_2)\) is injective.
    
    \item \thref{Prop:lowfilt} and the previous step tell us that \(gr_\gamma^3R(G)\to gr_{\mathrm{geom}}^3R(G)\)
    is surjective. In particular \(gr_{\mathrm{geom}}^3R(G)\) is generated by Chern classes.
    Using universal polynomials we can prove that \(gr_{\mathrm{geom}}^3R(G)\) is 2-torsion.
    
    \item Next we want to to determine \(CH^3(BG)\) by identifying all degree 3 relations between Chern classes.
    The map \(CH^3(BG) \to gr_{\mathrm{geom}}^3R(G)\) is surjective,
    where the size of \(H^3(BG,\mathbb{Z})\) gives an upper bound on the size of the kernel by means of the motivic power operation \(\beta\circ P^1\)
    as described in \thref{lem:surjgrade3}.
    We identify a subgroup of said kernel whose size meets this upper bound by finding sufficiently many elements in the kernel of
    \[f\colon A^3\to gr^3_\gamma R(G)\]
    that do not vanish under the map
    \[g\colon A^3\to CH^3(BG)\to\prod_{H'\in \{H_0,H_\infty,I_0,I_\infty,C_2^{2,G}\}}  CH^3(H')\]
    where \(A^3\) is the free \(\mathbb{F}_2\)-vector
    space generated by degree 3 products of Chern classes and \(C_2^{2,G}\) is defined as in \thref{prop:resg2}. 
    Furthermore those elements of \(A^3\) that vanish under \(g\) but not under \(f\) turn out to not lie in the kernel 
    of the modulo 2 cycle class map as given in \autoref{sec:cycle}, which means they do not vanish in \(CH^3(BG)\).
    This gives us that the map \(CH^3(BG) \to gr_{\mathrm{geom}}^3R(G)\) factors through \(gr^3_\gamma R(G)\)
    and the kernels of the maps \(CH^3(BG) \to gr_{\mathrm{geom}}^3R(G)\) and \(CH^3(BG) \to gr_{\gamma}^3R(G)\)
    coincide.
    From this, we get \[gr^3_\gamma R(G)=gr^3_{\mathrm{geom}}R(G)\] and the relations holding in \(CH^3(BG)\)
are simply given by the intersection of kernels of \(f\) and \(g\).
    \item We observe that the modulo 2 cycle class map for \(BG\) is injective in degree 3 using the relations obtained in the previous step.
    
    \item Lastly we show that the modulo 2 cycle class map is injective in any degree. The centralizers of non-central abelian subgroups are copies of \(L\) as defined in \autoref{sec:chowl}, direct products \(C_2\times \Syl_2(\GL(3,2))\) or elementary abelian as can be checked using \cite{GAP4}. As all Chow rings of
    centralizers of non-central elementary abelian subgroups do not contain any nilpotents, \thref{Lem:detect2} gives us injectivity in all degrees.
\end{itemize}

\subsection{Representations of \(\Syl_2(\GL(4,2))\)}
As in the previous computations, we base all our computations on the representation ring \(R(G)\). It can be obtained from \cite{GAP4} where \(G\) is given by the 
entry \((64,138)\) in the SmallGroups library.
We also computed
it in \autoref{sec:repg} with notations
\[\phi_0:=\phi_{1,0,\infty},\phi_1:=\phi_{1,1,\infty},\phi_\infty:=\phi_{1,\infty,0},\psi:=\psi_1\]
to be 
\begin{Prop}\thlabel{prop:repg2}
    The representation ring \(R(G)\) is generated by three degree 1 representations denoted as \(f_{(1,0,0)}\),\(f_{(0,1,0)}\),\(f_{(0,0,1)}\),
    three irreducible degree 2 representations \(\phi_0,\phi_1,\phi_\infty\) and one irreducible faithful representation of degree 4 denoted by \(\psi\). The multiplicative group \[\langle f_{(1,0,0)},f_{(0,1,0)},f_{(0,0,1)}\rangle\]
    acts on the irreducible degree \(2\) representations by tensor multiplication such that each \(\phi_{nk^{-1}}\) with \(n,k\in\mathbb{F}_2\) not both zero is a representative of a different orbit, each of which is of size \(2\) and
    \[f_{(kx,y,nx)}\otimes \phi_{nk^{-1}}=\phi_{nk^{-1}} \text{ for } x,y\in\mathbb{F}_2 \text{ arbitrary.}\]
    The multiplicative group \(\langle f_{(1,0,0)},f_{(0,1,0)},f_{(0,0,1)}\rangle\)
    acts transitively on the irreducible degree \(4\) representations and \[f_{(x,0,y)}\otimes \psi=\psi \text{ for } x,y\in\mathbb{F}_2 \text{ arbitrary}\]
    giving rise to an orbit of size \(2\).
    The generators are subject to relations for all \(n,k\in\mathbb{F}_2\) where \(n,k\) are not both zero
    \begin{equation}
        \phi_{nk^{-1}}^2=\sum_{a,b\in\mathbb{F}_p}f_{(ka,b,-na)},
    \end{equation}
     \begin{equation}
        \phi_{\ell_1}\otimes\phi_{\ell_2}=\phi_{\ell_3}+f_{(a,b,c)}\otimes\phi_{\ell_3}
    \end{equation}
    where \(\ell_1,\ell_2,\ell_3\in\{0,1,\infty\}\) are pairwise distinct and \(a,b,c\in\mathbb{F}_2\)
    such that \(\phi_{\ell_3}\neq f_{(a,b,c)}\otimes\phi_{\ell_3}\),
    \begin{equation}
        \psi^2=3\psi+f_{(0,1,0)}\otimes\psi,
    \end{equation}
    \begin{equation}\label{eq:psiphi}
        \phi_{nk^{-1}}\otimes\psi=\psi+f_{(0,1,0)}\otimes \psi
    \end{equation}
    \begin{equation}\label{eq:lphip}
        \lambda^2(\phi_0)=f_{(1,1,0)}, \lambda^2(\phi_\infty)=f_{(0,1,1)},
    \end{equation}
    \begin{equation}\label{eq:lpsip}
         \lambda^2(\psi)=f_{(0,0,1)}\otimes\phi_0+f_{(0,0,1)}\otimes\phi_1+f_{(1,0,0)}\otimes\phi_\infty,
    \end{equation} 
    \begin{equation}\label{eq:l3} 
        \lambda^3(\psi)=f_{(0,1,0)}\otimes\psi,
    \end{equation}
    \begin{equation}\label{eq:l4}
         \lambda^4(\psi)=f_{(0,1,0)},
    \end{equation}
    Here the \(f_{({a,b,c})}\) with \(a,b,c\in\mathbb{F}_2\)
    are the elements of \(\mathrm{Hom}(H,\mathbb{C}^\times)\cong H^{\mathrm{ab}}\) and we have
     \[f_{(a,b,c)}\otimes f_{(a',b',c')}=f_{(a+a',b+b',c+c')}.\]
 \end{Prop}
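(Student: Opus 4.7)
My plan is to prove the description of $R(G)$ given in Proposition 8.1 by building up from the already-understood representation ring $R(L)$ of the index-two subgroup $L \leq G$ from Section 6 via Clifford theory, together with direct character computations. First I would identify the degree 1 representations: since $G$ is a 2-group, $\mathrm{Hom}(G,\mathbb{C}^\times) \cong G^{\mathrm{ab}}$, and a direct computation with the upper triangular matrix presentation of $G$ shows $G^{\mathrm{ab}} \cong C_2^3$, generated by the three quotient maps to the three "pairs" of off-diagonal entries modulo the commutator subgroup. These give the eight characters $f_{(a,b,c)}$, contributing $8$ to $\sum d_i^2$. A conjugacy class count or, equivalently, $|G| = 64$ with the asserted table of degrees ($8$ of degree $1$, $6$ of degree $2$, $2$ of degree $4$) satisfies $8 + 24 + 32 = 64$, so \emph{if} we produce the listed irreducibles and verify irreducibility, they exhaust $\mathrm{Irr}(G)$.

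Next I would obtain the higher-degree irreducibles by Clifford theory along $L \triangleleft G$, which is normal of index $2$. For each $L$-irreducible from Proposition 6.1, I would determine its inertia subgroup in $G$: those $L$-representations fixed by the outer $G/L$-action extend to $G$ in two ways (differing by pulling back the nontrivial character of $G/L$), while non-fixed ones appear in conjugate pairs and induce up to an irreducible of doubled dimension. The eight linear characters of $L$ give (in total, after identifying extensions) the eight linear characters $f_{(a,b,c)}$ of $G$ plus pairs inducing to degree $2$ irreducibles; these induced representations I would identify with the $\phi_0, \phi_1, \phi_\infty$ (and their twists by $f_{(a,b,c)}$). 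The six degree-$2$ representations of $L$ are permuted by the $G/L$-action so that three pairs fuse under induction to give the irreducible degree $4$ representation $\psi$ and its twist. Faithfulness of $\psi$ follows because its kernel is a normal subgroup of $G$ missing some generator of $L$, and faithfulness of the degree-$2$ representations of $L$ appearing in the pair.

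With all irreducibles in hand, the multiplicative relations come from computing characters on a set of conjugacy class representatives and checking the identities $\phi_{nk^{-1}}^2 = \sum_{a,b} f_{(ka,b,-na)}$, $\phi_{\ell_1} \otimes \phi_{\ell_2} = \phi_{\ell_3} + f_{(a,b,c)}\otimes \phi_{\ell_3}$, $\psi^2 = 3\psi + f_{(0,1,0)}\otimes \psi$, and $\phi_{nk^{-1}}\otimes\psi = \psi + f_{(0,1,0)}\otimes\psi$ by matching character values. Frobenius reciprocity combined with the formula for the character of an induced representation makes this a finite bookkeeping computation, and the stabilizer identities $f_{(kx,y,nx)}\otimes \phi_{nk^{-1}} = \phi_{nk^{-1}}$ and $f_{(x,0,y)}\otimes\psi = \psi$ are read off directly from the inertia analysis in the previous step.

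The main obstacle is the determination of the exterior powers of $\psi$, particularly $\lambda^2(\psi)$, which is a six-dimensional representation that must be decomposed into irreducibles. I would compute its character via Newton's identity $\chi_{\lambda^2(\psi)}(g) = \tfrac{1}{2}(\chi_\psi(g)^2 - \chi_\psi(g^2))$ on each conjugacy class, then invert the character table of $G$ to read off the multiplicities of each irreducible, verifying the claimed decomposition $\lambda^2(\psi) = f_{(0,0,1)}\otimes \phi_0 + f_{(0,0,1)}\otimes \phi_1 + f_{(1,0,0)}\otimes \phi_\infty$. The formulas $\lambda^3(\psi) = f_{(0,1,0)}\otimes \psi$ and $\lambda^4(\psi) = f_{(0,1,0)}$ are simpler: $\lambda^4(\psi) = \det(\psi)$ is a linear character and is identified by evaluation on generators of $G$, while $\lambda^3(\psi) \cong \lambda^1(\psi^\vee) \otimes \det(\psi)$ reduces the computation to $\psi^\vee$, which for a $2$-group with real character table is just $\psi$ again up to the determinantal twist. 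The exterior powers $\lambda^2(\phi_0), \lambda^2(\phi_\infty)$ are determinantal and identified analogously by evaluating on generators. Irreducibility of $\psi$, needed to justify its appearance as a single summand, follows from $\langle \chi_\psi, \chi_\psi\rangle = 1$, which is a one-line character inner product once the character values are tabulated.
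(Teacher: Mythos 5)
Your route is genuinely different from the paper's: the paper never invokes Clifford theory along \(L\triangleleft G\), but instead constructs the irreducibles directly (the linear characters from \(G^{\mathrm{ab}}\), the degree \(2\) representations by inflation from the quotients \(G/N_{nk^{-1}}\cong \Syl_2(\GL(3,2))\), and \(\psi\) by explicit matrices on the generators \(E_{i,j}\)), then verifies irreducibility and completeness by orthogonality of characters against a Burnside count of conjugacy classes, and checks every product and exterior power on characters. Your induction-from-\(L\) strategy is legitimate in principle and would reuse \thref{prop:repl2} efficiently, but as written your inertia analysis contains a concrete error that breaks the argument.

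Specifically, you assert that the six degree \(2\) irreducibles of \(L\) are permuted by \(G/L\) in three free orbits of size two, each inducing irreducibly. That would produce three pairwise non-isomorphic irreducible degree \(4\) representations of \(G\) (distinct \(G/L\)-orbits of \(L\)-irreducibles induce non-isomorphic \(G\)-irreducibles), contradicting the degree count \(8\cdot 1+6\cdot 4+2\cdot 16=64\) that you yourself set up, which allows only two. The correct bookkeeping is forced by counting: restriction \(\mathrm{Hom}(G,\mathbb{C}^\times)\to\mathrm{Hom}(L,\mathbb{C}^\times)\) has kernel \(\mathrm{Hom}(G/L,\mathbb{C}^\times)\) of order \(2\), so exactly four linear characters of \(L\) extend (accounting for all eight linear characters of \(G\)) and the other four fuse in two pairs inducing two of the six degree \(2\) irreducibles of \(G\); hence four degree \(2\) irreducibles of \(G\) must restrict irreducibly to \(L\), which means exactly two of the six degree \(2\) irreducibles of \(L\) are \(G\)-stable (each extending in two ways) and only the remaining four fuse, in two pairs, to give \(\psi\) and \(f_{(0,1,0)}\otimes\psi\). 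Since you propose to read the stabilizer identities \(f_{(kx,y,nx)}\otimes\phi_{nk^{-1}}=\phi_{nk^{-1}}\) and \(f_{(x,0,y)}\otimes\psi=\psi\) directly off this inertia analysis, the error propagates; you would need to redo the orbit computation before the rest of your plan (character verification of the tensor relations, Newton's identity for \(\lambda^2(\psi)\), \(\lambda^4(\psi)=\det\psi\), \(\lambda^3(\psi)\cong\psi^\vee\otimes\det\psi\) with \(\psi^\vee\cong\psi\) since all characters here are real-valued) goes through. Those remaining steps are sound and essentially match the paper's character-theoretic verification of the relations.
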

 We will make heavy use of restriction arguments, so we need to describe certain restriction maps
 \(R(G)\to R(H')\) for subgroups \(H'\leq G\). The following statements are given in \autoref{subsec:resg} but
 can also be verified using \cite{GAP4}.
 \begin{Prop}\thlabel{prop:resg2}
     Define subgroups \(H_0,H_\infty,I_0,I_\infty\leq \Syl_2(\GL(4,2))\) which are isomorphic to 
     the group \(\Syl_2(\GL(3,2))\) as
     \[H_{\infty}:=\left\{\begin{pmatrix}1&x&y&0\\0&1&z&0\\0&0&1&0\\0&0&0&1\end{pmatrix}\middle| x,y,z\in\mathbb{F}_2\right\}, 
     H_{0}:=\left\{\begin{pmatrix}1&0&0&0\\0&1&x&y\\0&0&1&z\\0&0&0&1\end{pmatrix}\middle| x,y,z\in\mathbb{F}_2\right\},\]
     \[I_{\infty}:=\left\{\begin{pmatrix}1&x&0&y\\0&1&0&z\\0&0&1&0\\0&0&0&1\end{pmatrix}\middle| x,y,z\in\mathbb{F}_p\right\}, 
     I_{0}:=\left\{\begin{pmatrix}1&0&x&y\\0&1&0&0\\0&0&1&z\\0&0&0&1\end{pmatrix}\middle| x,y,z\in\mathbb{F}_p\right\}.\]
     Restriction \(R(G)\to R(H_0)\) maps
     \[\phi_0\mapsto\phi, \phi_1\mapsto 1+f_{(1,0)},\phi_\infty\mapsto 1+f_{(1,0)},\psi\mapsto1+f_{(0,1)}+\phi.\]
     Restriction \(R(G)\to R(H_\infty)\) maps 
     \[\phi_\infty\mapsto\phi, \phi_1\mapsto 1+f_{(0,1)},\phi_0\mapsto 1+f_{(0,1)},\psi\mapsto1+f_{(1,0)}+\phi.\]
     Restriction \(R(G)\to R(I_0)\) maps 
     \[\phi_0\mapsto 2f_{(1,0)}, \phi_1\mapsto f_{(1,0)}+f_{(1,1)},\phi_\infty\mapsto 1+f_{(0,1)},\psi\mapsto 2\phi.\]
     Restriction \(R(G)\to R(I_\infty)\) maps 
     \[\phi_0\mapsto 1+f_{(1,0)}, \phi_1\mapsto f_{(0,1)}+f_{(1,1)},\phi_\infty\mapsto2f_{(0,1)},\psi\mapsto 2\phi.\]
     We also introduce the group \(C_2^{2,G}\cong C^2_2\) of the form
    \[C_2^{2,G}:=\left\{\begin{pmatrix} 1&x&0&0\\0&1&0&0\\0&0&1&y\\0&0&0&1 \end{pmatrix}\middle| x,y\in\mathbb{F}_2\right\}.\]
    Denote by \(\sigma_i\) the representation \[\sigma_i\colon C_2^n \to \mathbb{C}^\times\] that projects
    \(C_2^n\) to its \(i\)-th entry which we identify with the cyclic group \(\{\pm1\}\).
    Restricting on \(C_2^{2,G}\) maps 
    \[\phi_0\mapsto 1+\sigma_1, \phi_1\mapsto 1+\sigma_1\sigma_2,\phi_\infty\mapsto 1+\sigma_2,\psi\mapsto 1+\sigma_1+\sigma_1\sigma_2+\sigma_2.\]
 \end{Prop}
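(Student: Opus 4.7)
The plan is to verify each of the stated restrictions by direct character-theoretic computation, using the explicit realizations of the irreducibles of $G$ produced in \autoref{sec:repg} and of $\Syl_2(\GL(3,2))$ in \autoref{sec:reph}. Since restriction $R(G)\to R(H')$ is a ring homomorphism and $R(G)$ is generated over $\mathbb{Z}$ by the seven representations listed in \thref{prop:repg2}, it suffices to identify the image of each generator. The images of the degree $1$ generators $f_{(a,b,c)}$ can be read off mechanically: each factors through $G^{\mathrm{ab}}\cong C_2^3$, so its restriction is determined by which matrix entries parametrise $H'$ and how they project to $H'^{\mathrm{ab}}$.

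For the four subgroups isomorphic to $\Syl_2(\GL(3,2))$, I would compute, for each of $\phi_0,\phi_1,\phi_\infty,\psi$, the character values on representatives of the five conjugacy classes of $H'\cong D_8$ and then express the resulting class function as a non-negative integer combination of the five irreducible characters $1,f_{(1,0)},f_{(0,1)},f_{(1,1)},\phi$ from \thref{prop:reph2}, either by inspection or by the standard inner product against the character table. Degrees and faithfulness give rapid sanity checks: for instance, $\phi_1\vert_{H_0}$ has degree $2$ but, by the stabiliser formula in \thref{prop:repg2}, is no longer faithful on $H_0$, forcing it to split as a sum of two linear characters. For the abelian subgroup $C_2^{2,G}$ the restriction splits completely into sums of $1,\sigma_1,\sigma_2,\sigma_1\sigma_2$, and the multiplicities are recovered by evaluating the characters of $\phi_\ell$ and $\psi$ on the four elements of $C_2^{2,G}$ and inverting the character table of $C_2^2$.

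The main obstacle is bookkeeping: keeping track of which coordinate in $G^{\mathrm{ab}}\cong C_2^3$ corresponds to which matrix position, so that the answer comes out as $1+f_{(1,0)}$ rather than $1+f_{(0,1)}$, and similarly monitoring the subscripts $nk^{-1}$ of the $\phi$'s under the action of $\langle f_{(1,0,0)},f_{(0,1,0)},f_{(0,0,1)}\rangle$ recorded in \thref{prop:repg2}. The most delicate cases are $\psi\vert_{H_0}=1+f_{(0,1)}+\phi$ and its analogue on $H_\infty$: matching the two-dimensional summand with the specific $\phi$ of $H'$ fixed in \thref{prop:reph2} requires comparing $\chi_\psi$ with $1+\chi_{f_{(0,1)}}+\chi_\phi$ on every conjugacy class of $H_0$. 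Alternatively, since $\psi$ is induced from a degree $2$ representation of an index $2$ subgroup of $G$, Mackey's formula provides a more conceptual route once the double coset decomposition $H'\backslash G/\mathrm{Stab}(\psi)$ has been identified; the same remark applies to each $\phi_\ell$, which is induced from an index $2$ subgroup on which it becomes a linear character.
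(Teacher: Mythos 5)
Your proposal is correct and follows essentially the same route as the paper: the paper's own justification (\autoref{subsec:resg}, together with the remark that everything can be double-checked in GAP) consists precisely of restricting the explicit character formulas from \autoref{sec:repg} to these subgroups and decomposing against the irreducible characters of \(\Syl_2(\GL(3,2))\) and \(C_2^2\) computed in \autoref{sec:reph}, with the restrictions to \(H_0\) and \(H_\infty\) coming almost for free since the \(\phi_{i,\ell,j}\) are defined as inflations along the splittings \(G\to G/N_\ell\cong H_j\). Your added sanity checks (degree and faithfulness forcing the split into linear characters, and the Mackey alternative) are sound but not used in the paper.
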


We will perform our computations in terms of the representations determined in \thref{prop:repg2}.
\subsection{Determining degree 1}
\begin{Lem}\thlabel{lem:chgdeg1}
    \[CH^1(BG)=\langle c_1(\phi_0),c_1(\phi_\infty),c_1(\psi)\rangle_{\mathbb{F}_2}.\]
\end{Lem}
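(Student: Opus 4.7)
The plan is to proceed exactly as in the analogous statements of \autoref{subsec:deg1h} and the degree 1 determination in \autoref{sec:chowl}. By \thref{prop:deg1} we have
\[
CH^1(BG) \cong \mathrm{Hom}(G,\mathbb{C}^\times) \cong G^{\mathrm{ab}},
\]
and from the description of $R(G)$ in \thref{prop:repg2} the abelianization is the $\mathbb{F}_2$-vector space of rank $3$ generated by the three linear characters $f_{(1,0,0)},f_{(0,1,0)},f_{(0,0,1)}$. Hence $CH^1(BG)$ is a three-dimensional $\mathbb{F}_2$-vector space with basis $c_1(f_{(1,0,0)}),c_1(f_{(0,1,0)}),c_1(f_{(0,0,1)})$, so it suffices to show that each of these three classes lies in the span of $c_1(\phi_0),c_1(\phi_\infty),c_1(\psi)$.

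The key input is the splitting principle together with the Whitney sum formula from \thref{prop:chernrels}: if a representation $x$ of degree $n$ splits as a sum of line elements $x_1+\dots+x_n$, then the top exterior power $\lambda^n(x)=x_1\cdots x_n$ is itself a line element whose first Chern class equals $c_1(x_1)+\dots+c_1(x_n)=c_1(x)$. Applying this to the exterior power identities (\ref{eq:lphip}) and (\ref{eq:l4}) of \thref{prop:repg2} yields
\[
c_1(\phi_0) = c_1(f_{(1,1,0)}), \qquad c_1(\phi_\infty) = c_1(f_{(0,1,1)}), \qquad c_1(\psi) = c_1(f_{(0,1,0)}),
\]
using also that $c_1$ is additive on tensor products of line elements.

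It then remains to observe that the linear map $\mathbb{F}_2^3\to\mathbb{F}_2^3$ sending the basis $(c_1(f_{(1,0,0)}),c_1(f_{(0,1,0)}),c_1(f_{(0,0,1)}))$ to $(c_1(\phi_0),c_1(\psi),c_1(\phi_\infty))$ is given by an invertible matrix over $\mathbb{F}_2$: indeed one can solve directly
\[
c_1(f_{(0,1,0)}) = c_1(\psi),\quad c_1(f_{(1,0,0)}) = c_1(\phi_0)+c_1(\psi),\quad c_1(f_{(0,0,1)}) = c_1(\phi_\infty)+c_1(\psi).
\]
This shows that $c_1(\phi_0),c_1(\phi_\infty),c_1(\psi)$ span (hence form a basis of) $CH^1(BG)$. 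There is no real obstacle in this argument; it is a direct consequence of \thref{prop:deg1}, the splitting principle, and the explicit computation of top exterior powers of $\phi_0,\phi_\infty,\psi$ recorded in \thref{prop:repg2}.
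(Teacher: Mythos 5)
Your proposal is correct and follows exactly the paper's own argument: \thref{prop:deg1} identifies \(CH^1(BG)\) with \(\mathrm{Hom}(G,\mathbb{C}^\times)=\langle f_{(1,0,0)},f_{(0,1,0)},f_{(0,0,1)}\rangle_{\mathbb{F}_2}\), and the universal polynomials from the top exterior powers \eqref{eq:lphip} and \eqref{eq:l4} give \(c_1(\phi_0)=c_1(f_{(1,1,0)})\), \(c_1(\phi_\infty)=c_1(f_{(0,1,1)})\), \(c_1(\psi)=c_1(f_{(0,1,0)})\), whence the asserted basis. You have simply spelled out the invertible change of basis that the paper leaves implicit.
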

\begin{proof}
    \thref{prop:deg1} tells us
    \[CH^1(BG)\cong gr_{\gamma}^1R(G)\cong  \mathrm{Hom}(G,\mathbb{C}^\times)=\langle f_{(1,0,0)},f_{(0,1,0)},f_{(0,0,1)}\rangle_{\mathbb{F}_2}\]
    which means
    \[CH^1(BG)=\langle c_1(f_{(1,0,0)}),c_1(f_{(0,1,0)}),c_1(f_{(0,0,1)})\rangle_{\mathbb{F}_2}.\]
    The universal polynomials coming from first Chern classes of exterior powers as given by \eqref{eq:l4} and \eqref{eq:lphip}
    \[\lambda^2(\phi_0)=f_{(1,1,0)}, \lambda^2(\phi_\infty)=f_{(0,1,1)}, \lambda^4(\psi)=f_{(0,1,0)}\]
    then give us
    \[CH^1(BG)=\langle c_1(\phi_0),c_1(\phi_\infty),c_1(\psi)\rangle_{\mathbb{F}_2}.\]
\end{proof}
\subsection{The modulo 2 cycle class map}\label{subsec:cycleg2}
We will want to check in the following steps that the modulo 2 cycle map for \(BG\) is injective.
To describe this map, we use a presentation of \(H^*(BG,\mathbb{F}_2)\) as
the graded-commutative \(\mathbb{F}_2\)-algebra generated by elements
\(b_{1,0}, b_{1,1}, b_{1,2}\) of degree 1, \(b_{2,4}, b_{2,5}, b_{2,6}\) of degree 2, \(b_{3,11}\) of degree 3
and \(c_{4,18}\) of degree 4 subject to the relations
    \begin{gather*}
        b_{1,0}b_{1,1},
    b_{1,0}b_{1,2},\\
    b_{2,5}b_{1,2}+b_{2,4}b_{1,2},
    b_ {2,5}b_{1,1}+b_{2,4}b_{1,2},
    b_{2,6}b_{1,1}+b_{2,4}b_{1,2},\\
    b_{1,2}b_{3,11}, b_{1,1}b_{3,11},
    b_{1,0}b_{3,11}+b_{2,5}^2+b_{2,4}b_{2,6},\\
    b_{3,11}^2+b_{2,5}^2b_{2,6}+b_{2,5}^3+b_{2,4}b_{2,5}b_{2,6}+b_{2,4}b_{2,5}^2
      +c_{4,18}b_{1,0}^2
    \end{gather*}
which is taken from \cite[Group number 138 of order 64]{cohom}.
It is straightforward to compute the image of Chern classes under the modulo 2 cycle class map using
the restrictions given in \cite[Group number 138 of order 64]{cohom} and restriction of representations
computed in \cite{GAP4}. We did so in \autoref{sec:cycle} and they are 
\begin{gather*}
    c_1(\phi_0)\mapsto b_{1,0}^2+b_{1,1}^2, c_1(\phi_\infty)\mapsto b_{1,0}^2+b_{1,2}^2, c_1(\psi)\mapsto b_{1,0}^2,\\
    c_2(\phi_0)\mapsto b_{2,4}^2, c_2(\phi_\infty)\mapsto b_{2,4}^2,\\
    c_2(\psi)\mapsto b_{2,4}^2+b_{2,5}^2+b_{2,6}^2+b_{1,1}^4+b_{1,1}^2b_{1,2}^2+b_{1,2}^4,\\
    c_3(\psi)\mapsto b_{3,11}^2+b_{1,2}^2b_{2,6}^2+b_{1,1}^2b_{2,4}^2+b_{1,1}^2b_{1,2}^4+b_{1,1}^4b_{1,2}^2,\\
    c_4(\psi)\mapsto c_{4,18}^2.
\end{gather*}
We will later see that the Chern classes that we consider above
generate \(CH^*(BG)/2\).
\subsection{Determining degree 2}
\begin{Lem}\thlabel{lem:chgdeg2}
    \begin{gather*}
    CH^2(BG)\cong gr^2_{\mathrm{geom}} R(G)\cong gr_\gamma^2R(G) \\=
    \frac{\langle c_1(A)c_1(B)\mid A,B\in\{\phi_0,\phi_\infty, \psi\}\rangle_{\mathbb{Z}/2\mathbb{Z}}}
    {\langle c_1(\phi_0)c_1(\psi)+c_1(\psi)^2, c_1(\phi_\infty)c_1(\psi)+c_1(\psi)^2\rangle_{\mathbb{Z}/2\mathbb{Z}}}\oplus\langle c_2(\phi_0),c_2(\phi_\infty), c_2(\psi)\rangle_{\mathbb{Z}/4\mathbb{Z}}.
    \end{gather*}
    Furthermore the modulo 2 cycle class map \(CH^2(BG)/2\to H^4(BG,\mathbb{F}_2)\) is injective.
\end{Lem}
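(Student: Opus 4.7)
The plan follows the blueprint sketched at the start of \autoref{sec:chowg}, specialized to degree two. First I would pin down a generating set for $CH^2(BG)$: by \thref{thm:faithgens} applied to the faithful representation $\psi$ of degree $4$, combined with \thref{kor:reg} and \thref{prop:gammagens}, the ring $CH^*(BG)$ is generated by Chern classes of representations; then \thref{kor:basicpoly} together with \thref{lem:chgdeg1} shows that the six classes $c_1(\phi_0), c_1(\phi_\infty), c_1(\psi), c_2(\phi_0), c_2(\phi_\infty), c_2(\psi)$ suffice, after using the universal polynomial attached to $\phi_0 \otimes \phi_\infty = \phi_1 + f_{(a,b,c)} \otimes \phi_1$ from \thref{prop:repg2} to eliminate $c_2(\phi_1)$.

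Second, I would produce the claimed relations inside $gr^2_\gamma R(G)$ and transport them to $CH^2(BG)$ through the surjection of \thref{thm:surjgrade}. The $2$-torsion of first Chern classes is automatic from \thref{prop:deg1} and \thref{lem:elab}. The $4$-torsion of each $c_2(\phi_i)$ and of $c_2(\psi)$ is obtained by expanding $4\gamma^2(\phi_i - 2)$ and $4\gamma^2(\psi - 4)$ via the Whitney sum formula and the identities of \thref{prop:repg2}, in direct analogy with the calculation of \autoref{subsec:deg2h}. The two mixed relations $c_1(\phi_0) c_1(\psi) + c_1(\psi)^2$ and $c_1(\phi_\infty) c_1(\psi) + c_1(\psi)^2$ arise from applying the universal polynomial for $c_2$ of a tensor product to \eqref{eq:psiphi}, equating $c_2(\phi_{nk^{-1}} \otimes \psi)$ with $c_2(\psi + f_{(0,1,0)} \otimes \psi)$.

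The heart of the proof, and the step I expect to be the main obstacle, is to rule out further relations. Let $A^2$ denote the free $\mathbb{Z}$-module on the symbols $c_1(X) c_1(Y)$ with $X,Y \in \{\phi_0, \phi_\infty, \psi\}$ and $c_2(Z)$ with $Z \in \{\phi_0, \phi_\infty, \psi\}$. Using the restriction data of \thref{prop:resg2} together with the explicit presentation of $CH^*(B\Syl_2(\GL(3,2)))$ from \autoref{sec:chowh}, I would compute the combined restriction
\begin{equation*}
A^2 \longrightarrow \prod_{H' \in \{H_0, H_\infty, I_0, I_\infty\}} CH^2(BH')
\end{equation*}
and verify that its kernel is precisely the submodule generated by the relations produced in the previous step. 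Since this map factors through $CH^2(BG)$ and, by \thref{Prop:lowfilt} and \thref{thm:surjgrade}, also through the surjections $gr^2_\gamma R(G) \twoheadrightarrow gr^2_{\mathrm{geom}} R(G) \twoheadrightarrow CH^2(BG)$, all three intermediate quotients must coincide with the quotient in the statement.

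Finally, injectivity of the mod $2$ cycle class map $CH^2(BG)/2 \to H^4(BG, \mathbb{F}_2)$ drops out of the same commutative diagram: for each $H' \cong \Syl_2(\GL(3,2))$ the integral computation of \autoref{sec:chowh} shows that the mod $2$ cycle class map is injective in degree $2$, while the combined restriction on $CH^2(BG)/2$ is injective by the previous step, so injectivity propagates to $G$.
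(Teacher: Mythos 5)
Your overall architecture is the paper's: generators via \thref{thm:faithgens}, \thref{kor:reg} and \thref{prop:gammagens}; relations via universal polynomials transported through the surjection of \thref{thm:surjgrade}; absence of further relations via restriction to subgroups isomorphic to \(\Syl_2(\GL(3,2))\); and the mixed relations do indeed come from \eqref{eq:psiphi} exactly as you say. But two steps as you have set them up would fail.

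First, the elimination of \(c_2(\phi_1)\). Applying the Whitney sum formula to \(\phi_0\otimes\phi_\infty=\phi_1+f_{(a,b,c)}\otimes\phi_1\) gives
\[c_2(\phi_0\otimes\phi_\infty)=2c_2(\phi_1)+(\text{products of first Chern classes}),\]
since \(c_2(f\otimes\phi_1)=c_2(\phi_1)+c_1(\phi_1)c_1(f)+c_1(f)^2\). So this identity only expresses \(2c_2(\phi_1)\) in terms of the other generators, which does not let you drop \(c_2(\phi_1)\) from a generating set in a situation where second Chern classes carry \(4\)-torsion. The paper instead uses the exterior power \eqref{eq:lpsip}, \(\lambda^2(\psi)=f_{(0,0,1)}\otimes\phi_0+f_{(0,0,1)}\otimes\phi_1+f_{(1,0,0)}\otimes\phi_\infty\), where \(c_2(\phi_1)\) occurs with coefficient \(1\).

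Second, and more seriously, your list of test subgroups \(\{H_0,H_\infty,I_0,I_\infty\}\) is not enough to pin down the kernel. Consider \(2\bigl(c_2(\phi_0)+c_2(\phi_\infty)+c_2(\psi)\bigr)\). Using the restriction data of \thref{prop:resg2} together with \(CH^2(B\Syl_2(\GL(3,2)))\) from \autoref{sec:chowh}, this element restricts on each of the four subgroups to a sum of the form \(4c_2(\phi)+2c_1(\phi)^2+2c_1(f)^2=0\), so it lies in the kernel of your combined restriction; yet it is not a consequence of the relations you have produced (\(4\)-torsion of each \(c_2\) plus the two mixed first-Chern-class relations). Your verification step would therefore discover a strictly larger kernel and the argument would not close: restriction to these four subgroups only forces \(u\equiv v\equiv w\bmod 4\) with \(u,v,w\) even in a dependence \(uc_2(\phi_0)+vc_2(\phi_\infty)+wc_2(\psi)\equiv(\text{first Chern classes})\), leaving the case \(u\equiv v\equiv w\equiv 2\bmod 4\) open. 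The paper kills this case by additionally restricting to the cyclic subgroup \(C_4\) generated by the full Jordan block \(M\), on which each of the three second Chern classes restricts to \(3c_1(\sigma)^2\) while \(c_1(\phi_0),c_1(\phi_\infty),c_1(\psi)^2\) restrict to zero, forcing \(u+v+w\equiv 0\bmod 4\) and hence \(u,v,w\in 4\mathbb{Z}\). You need this (or some equivalent extra restriction) to obtain the claimed direct sum decomposition with a free \(\mathbb{Z}/4\mathbb{Z}\)-summand on the second Chern classes.
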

\begin{proof}
Keep in the following proof in mind that due to \thref{thm:surjgrade} there is a natural
surjective ring map \(gr^2_\gamma R(G)\to gr^2_{\mathrm{geom}}R(G)\cong CH^2(BG)\).
This means any relation in \(gr^2_\gamma R(G)\) also holds in \(gr^2_{\mathrm{geom}}R(G)\cong CH^2(BG)\)
but not necessarily the other way around.

We have seen in \thref{prop:repg2} that the representations \(f_{(a,b,c)},\phi_0,\phi_1,\phi_\infty,\psi\)
generate \(R(G)\) and by \thref{prop:gammagens} their Chern classes generate \(gr^*_\gamma R(G)\).
\thref{lem:chgdeg1} tells us that first Chern classes of the line elements \(f_{(a,b,c)}\) are already generated by the first Chern classes
of the \(\phi_i,\psi\). Therefore
\(gr^2_{\mathrm{geom}}R(G)\cong CH^2(BG)\) is generated (as an abelian group) by twofold products of first Chern classes \(c_1(\phi_0),c_1(\phi_\infty),c_1(\psi)\) together with the second Chern classes of \(\phi_0,\phi_1,\phi_\infty\) and \(\psi\). 
The universal polynomial (in the sense of \thref{prop:basicpoly}) given by the second Chern class of the exterior power \[\lambda^2(\psi)=f_{(0,0,1)}\otimes\phi_0+f_{(0,0,1)}\otimes\phi_1+f_{(1,0,0)}\otimes\phi_\infty\] 
computed in \eqref{eq:lpsip} yields a relation
\begin{align*}
    c_2(\phi_1)=3c_1(\psi)^2+2c_2(\psi)-c_2(\phi_0)-c_1(f_{(0,0,1)})c_1(\phi_0)-c_2(\phi_\infty)\\
    -c_1(f_{(1,0,0)})c_1(\phi_\infty)-c_1(\phi_0)c_1(\phi_1)-c_1(\phi_0)c_1(\phi_\infty)-c_1(\phi_1)c_1(\phi_\infty) - c_1(\phi_1)c_1(f_{(0,0,1)}).
\end{align*}
This lets us reduce our generating set by removing \(c_2(\phi_1)\). 

We now want to show that if the second Chern classes of \(\phi_0,\phi_\infty,\psi\) are all 4-torsion
then both \(gr^2_{\gamma}R(G)\) and \(gr^2_{\mathrm{geom}} R(G)\cong CH^2(BG)\) split into a direct sum of the free \(\mathbb{Z}/4\mathbb{Z}\)-module generated by the basis consisting of second Chern classes of \(\phi_0,\phi_\infty,\psi\)
and the \(\mathbb{Z}/2\mathbb{Z}\)-module generated by twofold products of first Chern classes of \(\phi_0,\phi_\infty,\psi\).
Recall from \autoref{subsec:moreh} for \(H:=\Syl_2(\GL(3,2))\)
\[gr^2_{\gamma}R(H)\cong CH^2(BH)\cong\frac{\langle c_1(f)^2,c_1(f)c_1(\phi),c_1(\phi)^2,c_2(\phi)\rangle_{\mathbb{Z}}}{\langle 2c_1(f)^2, 2c_1(\phi)^2, 4c_2(\phi), c_1(f)^2-c_1(f)c_1(\phi)\rangle_{\mathbb{Z}}} .\]
Before moving forward we will use the restriction maps from \thref{prop:resg2} to
describe how second Chern classes in \(gr^2_\gamma R(G)\) restrict to subgroups which are isomorphic to
\(H:=\Syl_2(\GL(3,2))\).
The following steps can be simply determined from \thref{prop:resg2}.
Restricting second Chern classes to \(H_0\) maps
\begin{align*}
    c_2(\phi_0)\mapsto 0, c_2(\phi_\infty)\mapsto c_2(\phi), c_2(\psi) \mapsto c_2(\phi)+c_1(\phi)^2.
\end{align*}
Restricting second Chern classes to \(H_\infty\) maps
\begin{align*}
    c_2(\phi_0)\mapsto c_2(\phi), c_2(\phi_\infty)\mapsto 0, c_2(\psi) \mapsto c_2(\phi)+c_1(\phi)^2.
\end{align*}
Restricting second Chern classes to \(I_0\) maps
\begin{align*}
    c_2(\phi_0)\mapsto c_1(f)^2, c_2(\phi_\infty)\mapsto 0, c_2(\psi) \mapsto 2c_2(\phi)+c_1(\phi)^2.
\end{align*}
Restricting second Chern classes to \(I_\infty\) maps
\begin{align*}
    c_2(\phi_0)\mapsto c_1(\phi)^2+c_1(f)^2, c_2(\phi_\infty)\mapsto 0, c_2(\psi) \mapsto 2c_2(\phi)+c_1(\phi)^2.
\end{align*}
From the restrictions and the presentation of \(gr^2_\gamma R(H)\cong gr^2_{\mathrm{geom}} R(H)\cong CH^2(BH)\), we can deduce that for \(u,v,w\in\mathbb{Z}\)
the linear combination \[uc_2(\phi_0)+vc_2(\phi_\infty)+wc_2(\psi)\] can be expressed in terms of first Chern classes only if \(u\equiv v\equiv w\bmod 4\) and all of \(u,v,w\) are multiples of 2. The last fact in the previous sentence will only be needed at the very end of the proof to show the injectivity of the modulo 2 cycle class map.

We would like to do a bit better and show that \[uc_2(\phi_0)+vc_2(\phi_\infty)+wc_2(\psi)\] can be expressed in terms of first Chern classes only if \(u,v,w\) are divisible by \(4\). To do so we restrict to
the group generated by the \(\mathbb{F}_2\)-matrix
\[M:=\begin{pmatrix} 1&1&0&0\\0&1&1&0\\0&0&1&1\\0&0&0&1\end{pmatrix}\]
which is isomorphic to \(C_4\). Denote by \(\sigma\) the faithful representation of this cyclic group
which sends \(M\) to the complex unit \(i\).
On this group all second Chern classes of \(\phi_0,\phi_\infty,\psi\) restrict to \(3c_1(\sigma)^2\)
and thus \(uc_2(\phi_0)+vc_2(\phi_\infty)+wc_2(\psi)\) restricts to \(3(u+v+w)c_1(\sigma)^2\). 
On the other hand the classes \(c_1(\phi_0), c_1(\phi_\infty)\) and \(c_1(\psi)^2\) vanish under this restriction.
As such \(uc_2(\phi_0)+vc_2(\phi_\infty)+wc_2(\psi)\)  can only be expressed in terms of first Chern classes
if \(3(u+v+w)c_1(\sigma)^2=0\) or equivalently \(u+v+w\equiv 0\bmod 4\). If we combine this with the previous
observation that \(uc_2(\phi_0)+vc_2(\phi_\infty)+wc_2(\psi)\) can only be expressed in terms of first Chern classes
if \(u\equiv v\equiv w\bmod 4\), then we get that it can only be expressed in such a way if \(u, v, w\in 4\mathbb{Z}\).
Consequently, if the second Chern classes of \(\phi_0,\phi_\infty,\psi\) are all 4-torsion,
then they span a free \(\mathbb{Z}/4\mathbb{Z}\)-module which is a direct summand of \(gr^2_\gamma R(G)\)
while the other direct summand is the \(\mathbb{Z}/2\mathbb{Z}\)-module spanned by twofold products of first Chern classes.
The same is true for \(gr^2_{\mathrm{geom}} R(G)\cong CH^2(BG)\) because our restriction arguments used 
exclusively subgroups \(H'\leq G\) with \(gr^2_\gamma R(H)\cong gr^2_{\mathrm{geom}} R(H)\cong CH^2(BH)\).

This does not yet prove that second Chern classes of \(\phi_0,\phi_\infty,\psi\) actually are 4-torsion.
For this we will do some more involved computations and to make them more legible we denote
\(A:=\phi_0-2, V:=\psi-4\)
meaning that \(c_i(\phi_0)\) is the image of \(\gamma^i(A)\) in \(gr_\gamma^2(R(G)\) and \(c_i(\psi)\)
is the image of \(\gamma^i(V)\).
We can compute 
\[4\gamma^2(A)=4\gamma^1(f_{(1,1,0)}-1)-4\gamma^1(A)\]
and 
\[4\gamma^1(f_{(1,1,0)}-1)=\gamma^1(f_{(1,1,0)}-1)^3\in\Gamma^3\]
which means that \(c_2(\phi_0)\) is of order 4 in \(gr^2_\gamma R(G)\) if and only if
\(4\gamma^1(A)\) lies in \(\Gamma^3\).
This is proven by the identity
\[4\gamma^1(A)-4\gamma^1(f_{(1,1,0)}-1)=\gamma^2(A)\gamma^1(V)\in\Gamma^3.\]
An analogous argument yields that \(c_2(\phi_\infty)\) is 4-torsion. 
To show that \(c_2(\psi)\) is 4-torsion first observe the following fact coming from \eqref{eq:psiphi}
together with the trivial action of the multiplicative group \(\langle f_{(1,0,0)},f_{(0,0,1)}\rangle\) on \(\psi\)
via tensor multiplication,
\[\phi_0\otimes\psi=\psi+ f_{(n,1,k)}\otimes\psi\]
where \(n,k\in\mathbb{F}_2\) are arbitrary.
Keep in mind that we already have seen that first Chern classes are 2-torsion and the universal polynomial
determining second Chern classes of the tensor products above (in the sense of \thref{prop:basicpoly}) gives us
\[4c_2(\psi)=c_1(\phi_0)c_1(\psi)+c_1(\psi)c_1(f_{(n,1,k)}).\]
Plugging in \(n=1, k=0\) yields that \(4c_2(\psi)=0\) in \(gr^2_\gamma R(G)\).
We have thus shown that the second Chern classes of \(\phi_0,\phi_\infty,\psi\) are 4-torsion.

Plugging \(n=0, k=1\) or \(n=0, k=0\) into the relation
\[4c_2(\psi)=c_1(\phi_0)c_1(\psi)+c_1(\psi)c_1(f_{(n,1,k)})\]
also gives us two degree 2 relations between first Chern classes
\[c_1(\phi_0)c_1(\psi)=c_1(\phi_\infty)c_1(\psi)=c_1(\psi)^2.\]

We now show that the determined torsion properties and the two relations above determine all relations in both \(gr^2_\gamma R(G)\) and \(gr^2_{\mathrm{geom}} R(G)\cong CH^2(BG)\). We have already shown that there are no more relations involving second Chern classes, so we can focus entirely
on first Chern classes. We again perform an argument by restriction to the subgroups \(H'\in\{H_0,H_\infty, I_0,I_\infty\}\)
where we have \(gr^2_\gamma R(H)\cong gr^2_{\mathrm{geom}} R(H)\cong CH^2(BH)\).

The following restrictions can be simply determined from \thref{prop:resg2}.
Restricting first Chern classes to \(H_0\) maps
\begin{align*}
    c_1(\phi_0)\mapsto c_1(f), c_1(\phi_\infty)\mapsto c_1(\phi), c_1(\psi)\mapsto c_1(f)+c_1(\phi).
\end{align*}
Restricting first Chern classes to \(H_\infty\) maps
\begin{align*}
    c_1(\phi_0)\mapsto c_1(\phi), c_1(\phi_\infty)\mapsto c_1(f)+c_1(\phi), c_1(\psi)\mapsto c_1(f)
\end{align*}
Restricting first Chern classes to \(I_0\) maps
\begin{align*}
    c_1(\phi_0)\mapsto 0, c_1(\phi_\infty)\mapsto c_1(f)+c_1(\phi), c_1(\psi)\mapsto 0.
\end{align*}
Restricting first Chern classes to \(I_\infty\) maps
\begin{align*}
    c_1(\phi_0)\mapsto c_1(f), c_1(\phi_\infty)\mapsto 0, c_1(\psi)\mapsto 0
\end{align*}
The remaining relations have to lie in the kernel of the restriction map 
\[\langle c_1(A)c_1(B)\mid A,B\in\{\phi_0,\phi_\infty, \psi\}\rangle_{\mathbb{Z}/2\mathbb{Z}}\to \prod_{H'\in \{H_0,H_\infty,I_0,I_\infty\}} gr^2_\gamma R(H_i)\cong \prod_{H'\in \{H_0,H_\infty,I_0,I_\infty\}} CH^2(BH_i)\]
and this kernel is by the computations above exactly the abelian group
\begin{align*}
    \langle c_1(\phi_0)c_1(\psi)+c_1(\phi_\infty)c_1(\psi),c_1(\phi_0)c_1(\psi)+c_1(\psi)^2\rangle_{\mathbb{Z}/2\mathbb{Z}}.
\end{align*}
All of these relations have already been shown to hold.  
It remains to show that the modulo 2 cycle class map \(CH^2(BG)/2\to H^4(BG,\mathbb{F}_2)\) is injective.
From our previous restriction arguments we still know that an element of the kernel of
\[CH^2(BG)\to \prod_{H'\in \{H_0,H_\infty,I_0,I_\infty\}} CH^2(BH_i)\]
is of the form \[2uc_2(\phi_0)+2vc_2(\phi_\infty)+2wc_2(\psi)+\text{sum of twofold products of first Chern classes}.\]
Therefore an element of the kernel of 
\[CH^2(BG)/2\to \prod_{H'\in \{H_0,H_\infty,I_0,I_\infty\}} CH^2(BH_i)/2\]
is a sum of twofold products of first Chern classes in \(CH^*(BG)/2\), but we have already seen that every such term that
vanishes under the given map is already zero in \(CH^*(BG)/2\). This means that \(CH^2(BG)/2\to H^4(BG,\mathbb{F}_2)\) is injective because \(CH^2(BH')/2\to H^4(BH',\mathbb{F}_2)\) is injective for all \(H'\in \{H_0,H_\infty,I_0,I_\infty\}\).
Alternatively this can also be checked using the description of the modulo 2 cycle class map from \autoref{subsec:cycleg2}.
\end{proof}

\subsection{Determining degree 3}
\begin{Lem}\thlabel{lem:chgdeg3}
    The abelian group \(CH^3(BG)\) is generated by degree 3 products \(\prod_i c_{n_i}(\alpha_i)\)
    such that \(\sum_i n_i=3\) and \(\alpha_i\in\{\phi_0,\phi_\infty,\psi\}\). All of these products are 2-torsion
    and subject to the relations
    \begin{gather*}
        c_1(\phi_0)c_2(\phi_\infty)+c_1(\phi_0)c_2(\psi)+
        c_1(\phi_\infty)c_2(\phi_0)+c_1(\phi_\infty)c_2(\psi),\end{gather*}
    \begin{gather*}
        c_1(\phi_0)c_2(\phi_0)+c_1(\psi)c_2(\phi_0)+c_1(\phi_\infty)c_2(\phi_\infty)+c_1(\psi)c_2(\phi_\infty)\\+c_1(\phi_0)c_2(\psi)+c_1(\phi_\infty)c_2(\psi)+c_1(\phi_0)^3+c_1(\phi_\infty)^3,
    \end{gather*}
    \begin{gather*}
        c_1(\phi_\infty)c_2(\phi_\infty)+c_1(\psi)c_2(\phi_\infty)+c_1(\phi_\infty)c_2(\psi)+c_1(\psi)c_2(\psi)\\+c_1(\phi_0)^2c_1(\phi_\infty)+c_1(\phi_0)c_1(\phi_\infty)^2+c_1(\phi_\infty)^3+c_1(\psi)^3.
    \end{gather*}
    Furthermore \(gr^3_\gamma R(G)=gr^3_{\mathrm{geom}} R(G)\) and the modulo 2 cycle class map
    \[CH^3(BG)/2\to H^6(BG,\mathbb{F}_2)\]
    is injective.
\end{Lem}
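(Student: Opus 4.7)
The plan is to mirror the outline sketched before \autoref{subsec:cycleg2} and to recycle, as much as possible, the techniques that already worked in degree 2. First I would fix a generating set for $gr^3_\gamma R(G)$. By \thref{prop:gammagens} and \thref{kor:basicpoly}, $gr^3_\gamma R(G)$ is generated by threefold products of Chern classes of a ring generating set of $R(G)$, and by \thref{lem:chgdeg1} together with the universal polynomial coming from \eqref{eq:lpsip} (as already used in the proof of \thref{lem:chgdeg2}) we may take this set to be $\{\phi_0,\phi_\infty,\psi\}$. Combining \thref{lem:chgdeg2} with \thref{Prop:lowfilt} yields that $gr^3_\gamma R(G)\to gr^3_{\mathrm{geom}}R(G)$ is surjective, so the listed monomials also generate $gr^3_{\mathrm{geom}}R(G)\twoheadleftarrow CH^3(BG)$.

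Next I would verify the 2-torsion claim in $gr^3_\gamma R(G)$. Every generator is a product of a first Chern class, already known to be 2-torsion by \thref{lem:chgdeg1}, with either another first Chern class (so the product is 2-torsion) or with a second Chern class. In the remaining cases I would expand $2c_1(\alpha)c_2(\beta)$ using the Whitney sum formula and the relations \eqref{eq:psiphi}, \eqref{eq:lphip}, \eqref{eq:lpsip}, \eqref{eq:l3}, \eqref{eq:l4} from \thref{prop:repg2}, in the same style as the computation $4c_2(\psi)=c_1(\phi_0)c_1(\psi)+c_1(\psi)c_1(f_{(n,1,k)})$ carried out in \thref{lem:chgdeg2}; suitable specialisations of these identities should kill $2c_1(\alpha)c_2(\beta)$ in $gr^3_\gamma R(G)$, and surjectivity transports this to $gr^3_{\mathrm{geom}}R(G)$.

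The main obstacle is to identify the ideal of relations. Let $A^3$ be the $\mathbb{F}_2$-vector space on the degree $3$ monomials above. I would assemble the combined restriction map
\[
g\colon A^3\longrightarrow CH^3(BG)\longrightarrow \prod_{H'\in\{H_0,H_\infty,I_0,I_\infty,C_2^{2,G}\}} CH^3(BH')
\]
using \thref{prop:resg2}: the images are computable entirely from the degree $3$ parts of $CH^*(B\Syl_2(\GL(3,2)))$ determined in \autoref{sec:chowh} and of $CH^*(BC_2^2)$ from \thref{lem:elab}. Linear algebra over $\mathbb{F}_2$ then identifies a candidate kernel; by direct inspection one checks that the three relations displayed in the statement span this kernel, and that each of them is realised already in $gr^3_\gamma R(G)$ via suitable universal polynomials (tensor product and exterior power identities from \thref{prop:repg2}). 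This proves they hold in $CH^3(BG)$ via \thref{thm:surjgrade}. To see that no further relations hide inside $CH^3(BG)$, I invoke \thref{lem:surjgrade3}: the kernel of $CH^3(BG)\to gr^3_{\mathrm{geom}}R(G)$ is a quotient of $\beta P^1 H^3(BG,\mathbb{Z})$, whose size is bounded using the presentation of $H^*(BG,\mathbb{F}_2)$ from \autoref{subsec:cycleg2} applied to the single degree $3$ generator $b_{3,11}$. Counting shows the candidate relations already account for the maximal possible kernel, so they are complete and $gr^3_\gamma R(G)=gr^3_{\mathrm{geom}}R(G)$.

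Finally, for injectivity of $CH^3(BG)/2\to H^6(BG,\mathbb{F}_2)$, I would argue as in the last paragraph of the proof of \thref{lem:chgdeg2}. An element of the kernel of $g$ modulo the three relations lies, after clearing the $H'$-components where injectivity of the mod $2$ cycle class map is known by \autoref{sec:chowh} and \thref{lem:elab}, in a small explicit subspace; comparing with the Chern class images tabulated in \autoref{subsec:cycleg2} shows that this subspace maps injectively to $H^6(BG,\mathbb{F}_2)$. Alternatively, one checks injectivity directly from the table of images $c_i(\alpha)\mapsto(\cdots)$ by a finite rank computation in $\mathbb{F}_2$, which is the cleaner bookkeeping strategy.
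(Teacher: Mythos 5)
There are genuine gaps here, concentrated in the step where you pass from computations in \(gr^3_\gamma R(G)\) and in the restrictions to actual relations in \(CH^3(BG)\). First, your claim that the three stated relations span the kernel of the combined restriction map \(g\colon A^3\to\prod_{H'}CH^3(BH')\) is false: that kernel is four-dimensional (the paper computes it explicitly as the row space of a \(4\times 15\) matrix), and the three relations span only a three-dimensional subspace of it. If you ran your linear algebra you would be left with a fourth candidate relation that restriction to \(H_0,H_\infty,I_0,I_\infty,C_2^{2,G}\) cannot rule out; the paper disposes of it by checking that it has nonzero image under the mod 2 cycle class map of \autoref{subsec:cycleg2}. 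Second, the inference ``each relation is realised in \(gr^3_\gamma R(G)\), hence holds in \(CH^3(BG)\) via \thref{thm:surjgrade}'' is invalid in degree 3: \thref{thm:surjgrade} gives an isomorphism \(CH^i(BG)\to gr^i_{\mathrm{geom}}R(G)\) only for \(i\le p=2\), and in degree 3 the map \(CH^3(BG)\to gr^3_{\mathrm{geom}}R(G)\) has a kernel which is exactly four-dimensional here. Vanishing in \(gr^3_\gamma\) only places an element in that kernel. The paper's actual mechanism is a dimension count: it exhibits a seven-dimensional subspace of \(A^3\) killed in \(gr^3_\gamma R(G)\), shows that four of those dimensions survive restriction and therefore inject into the kernel of \(CH^3(BG)\to gr^3_{\mathrm{geom}}R(G)\), saturating the upper bound \(\dim H^3(BG,\mathbb{Z})=4\) from \thref{lem:surjgrade3}; only this saturation forces the remaining three dimensions to vanish in \(CH^3(BG)\) and forces \(gr^3_\gamma R(G)\to gr^3_{\mathrm{geom}}R(G)\) to be injective. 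Your proposal inverts this logic by treating the relations themselves as ``accounting for'' the kernel of \(CH^3(BG)\to gr^3_{\mathrm{geom}}R(G)\); relations that hold are zero in \(CH^3(BG)\) and contribute nothing to that kernel.

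A smaller but real omission concerns the 2-torsion claim: your argument covers only generators that are products containing a first Chern class, which are indeed automatically 2-torsion by \thref{lem:chgdeg1}. The one generator not of that form is \(c_3(\psi)\) itself, and its 2-torsion requires the universal polynomial attached to \(\lambda^3(\psi)=f_{(0,1,0)}\otimes\psi\) from \eqref{eq:l3}, which reduces to \(3c_3(\psi)=c_3(\psi)\). Your final remarks on checking injectivity of the mod 2 cycle class map directly from the table in \autoref{subsec:cycleg2} agree with the paper, but they presuppose the correct presentation of \(CH^3(BG)\), which the preceding steps do not yet deliver.
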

\begin{proof}
As \(F_\gamma^2=F_{\mathrm{geom}}^2\) we get that \(gr^3_\gamma R(G)\to gr^3_{\mathrm{geom}} R(G)\) is injective and thus we have at most one new generator in degree 3 that is \(c_3(\psi)\).
We will immediately show that it is 2-torsion.
Recall from \eqref{eq:l3} the exterior power
\[\lambda^3(\psi)=f_{(0,1,0)}\otimes\psi.\]
The corresponding universal polynomials (in the sense of \thref{prop:basicpoly}) for determining the third Chern class of a third exterior power on the left hand side
and for determining the third Chern class of a tensor product on the right hand side yield a relation
\[c_1(\psi)^3+2c_1(\psi)c_2(\psi)+3c_3(\psi)=c_3(\psi)+2c_2(\psi)c_1(f_{(0,1,0)})+3c_1(\psi)c_1(f_{(0,1,0)})^2.\]
This relation reduces modulo the verified relations from the previous two lemmas to
\[3c_3(\psi)=c_3(\psi).\]
As all first Chern classes are also 2-torsion, \(CH^3(BG),gr^3_\gamma R(G),gr^3_{\mathrm{geom}}R(G)\) are all \(\mathbb{F}_2\) vector spaces.
Denote by \(A^3\) the free \(\mathbb{F}_2\)-vector space generated by the basis
\begin{align*}
    (&c_3(\psi),\\ &c_2(\phi_0)c_1(\phi_0), c_2(\phi_0)c_1(\phi_\infty), c_2(\phi_0)c_1(\psi),\\
    &c_2(\phi_\infty)c_1(\phi_0), c_2(\phi_\infty)c_1(\phi_\infty), c_2(\phi_\infty)c_1(\psi), \\ 
    &c_2(\psi)c_1(\phi_0), c_2(\psi)c_1(\phi_\infty),
    c_2(\psi)c_1(\psi),\\
    &c_1(\phi_0)^3, c_1(\phi_0)^2c_1(\phi_\infty), c_1(\phi_0)c_1(\phi_\infty)^2, c_1(\phi_\infty)^3, c_1(\psi)^3).
\end{align*}
Note that we excluded the products of first Chern classes where at least one factor is \(c_1(\psi)\) except for
\(c_1(\psi)^3\). This is done because they all coincide in \(CH^3(BG)\)
by the degree two relations we already found.

We can use linear algebra to compute the group of terms
that vanish when restricted via \(A^3\to CH^3(BH')\) with \(H'\in\{H_0,H_\infty,I_0,I_\infty,C_2^{2,G}\}\). 
An explicit description of these restriction maps applied to first and second Chern classes
is given in the proof of \thref{lem:chgdeg2} and for \(C_2^{2,G}\) in \autoref{subsec:ccm1}
but it can also simply be determined from \thref{prop:resg2}. The restrictions of \(c_3(\psi)\) can be seen from
\thref{prop:resg2} or read off of the upcoming matrices.
We choose as a \(\mathbb{F}_2\)-basis of \(CH^3(BH')\)
\[(c_2(\phi)c_1(\phi), c_2(\phi)c_1(f), c_1(\phi)^3, c_1(f)^3)\] and 
for \(CH^3(BC_2^{2,G})\) the \(\mathbb{F}_2\)-basis
\((X_1^3,X_1^2X_2,X_1X_2^2,X_3)\)
with \(X_i:=c_1(\sigma_i)\) and \(\sigma_i\) as in \thref{prop:resg2}.
Through this we can describe said restrictions via \(\mathbb{F}_2\)-matrices
\begin{align*}   A^3\to CH^3(BH_0)\colon
    \begin{pmatrix} 1&0&0&0&0&1&0&0&1&0&0&0&0&0&0 \\
    1&0&0&0&1&0&1&1&0&1&0&0&0&0&0 \\
    1&0&0&0&0&0&0&0&1&0&0&0&0&1&0 \\
    1&0&0&0&0&0&0&1&0&1&1&1&1&0&1 \\
    \end{pmatrix},
    \end{align*}
\begin{align*}
    A^3\to CH^3(BH_\infty)\colon
    \begin{pmatrix}
        0&1&1&1&0& 0&0&1&1&1& 0&0&0&0&0\\
        1&0&1&1&0& 0&0&0&1&1& 0&0&0&0&0\\
        0&0&0&0&0& 0&0&0&0&0& 1&1&1&1&1\\
        0&0&0&0&0& 0&0&1&0&0& 0&1&1&1&1
    \end{pmatrix},\end{align*}
\begin{align*}
    A^3\to CH^3(BI_0)\colon
    \begin{pmatrix}
        0&0&0&0&0&0&0&0&0&0&0&0&0&0&0\\
        0&0&0&0&0&0&0&0&0&0&0&0&0&0&0\\
        0&0&0&0&0&0&0&0&1&0&0&0&0&1&0\\
        0&0&0&0&0&0&0&0&1&0&0&0&0&1&0\\
    \end{pmatrix},\end{align*}
\begin{align*}
        A^3\to CH^3(BI_\infty)\colon
    \begin{pmatrix}
        0&0&0&0&0&0&0&0&0&0&0&0&0&0&0\\
        0&0&0&0&0&0&0&0&0&0&0&0&0&0&0\\
        0&0&0&0&0&0&0&0&0&0&0&0&0&0&0\\
        0&0&0&0&0&0&0&1&0&0&1&0&0&0&0\\
    \end{pmatrix},\end{align*}
\begin{align*}
        A^3\to CH^3(C_2^{2,G})\colon
    \begin{pmatrix}
        0&0&0&0&0&0&0&1&0&0&1&0&0&0&0\\
        1&0&0&0&0&0&0&1&1&0&0&1&0&0&0\\
        1&0&0&0&0&0&0&1&1&0&0&0&1&0&0\\
        0&0&0&0&0&0&0&0&1&0&0&0&0&1&0\\
    \end{pmatrix}.
\end{align*}
Their kernels intersect to form the four-dimensional vector space generated by the rows of the matrix
\begin{align*}
\begin{pmatrix}
    0 &1 &0 &1 &0 &0 &0 &1 &0 &1 &1 &1 &1 &0 &1\\
    0 &0 &1 &1 &0 &0 &0 &0 &0 &0 &0 &0 &0 &0 &0\\
    0 &0 &0 &0 &1 &0 &1 &0 &0 &0 &0 &0 &0 &0 &0\\
    0 &0 &0 &0 &0 &1 &1 &0 &1 &1 &0 &1 &1 &1 &1\\
\end{pmatrix}.
\end{align*}
This corresponds to the subspace of \(A^3\) spanned by the four terms
\begin{gather*}
    c_2(\phi_0)c_1(\phi_0)+c_2(\phi_0)c_1(\psi)+c_2(\psi)c_1(\phi_0)\\
    +c_2(\psi)c_1(\psi)+c_1(\phi_0)^3+c_1(\phi_0)^2c_1(\phi_\infty)+c_1(\phi_0)c_1(\phi_\infty)^2+c_1(\psi)^3,\\
    c_2(\phi_0)c_1(\phi_\infty)+c_2(\phi_0)c_1(\psi),\\
    c_2(\phi_\infty)c_1(\phi_0)+c_2(\phi_\infty)c_1(\psi),\\
    c_2(\phi_\infty)c_1(\phi_\infty)+c_2(\phi_\infty)c_1(\psi)+c_2(\psi)c_1(\phi_\infty)+c_2(\psi)c_1(\psi)+c_1(\phi_\infty)^3+c_1(\psi)^3.\\
\end{gather*}

The maps
\(gr_\gamma^3R(G)\to gr_{\mathrm{geom}}^3R(G)\) and \(CH^3(BG)\to gr_{\mathrm{geom}}^3R(G)\) surject by \thref{thm:surjgrade} together with \thref{lem:chgdeg2}. Furthermore due to \thref{lem:surjgrade3} there exists an operation \[(\mathbb{Z}/2\mathbb{Z})^4\cong H^3(BG)\to CH^3(BG)\] whose image is the kernel of the map
\(CH^3(BG)\to gr_{\mathrm{geom}}^3R(G)\). Here, the isomorphism \((\mathbb{Z}/2\mathbb{Z})^4\cong H^3(BG)\) was obtained using \cite{HAP}. We will compute an
\(\mathbb{F}_2\)-linear subspace of \(A^3\) that vanishes in \(gr_\gamma^3R(G)\) which is also a subspace of the kernel of \(A^3\to gr_{\mathrm{geom}}^3R(G)\). From this, we quotient out those terms that vanish when restricted to the Chow rings of \(H_0,H_\infty,I_0,I_\infty,C_2^{2,G}\) and thus get a subquotient of the kernel of \(CH^3(BG)\to gr^3_{\mathrm{geom}}\) which itself is a quotient of \(H^3(BG,\mathbb{Z})\).
This subquotient will have the same isomorphism type as \(H^3(BG,\mathbb{Z})\), which means they have to be isomorphic because they are finite.
As described we now verify that the following terms vanish in \(gr^3_\gamma\):
\begin{equation}\label{eq:new1}
    c_1(\phi_0)c_2(\phi_0), 
\end{equation}
\begin{equation}\label{eq:new2}
    c_1(\phi_\infty)c_2(\phi_\infty),
\end{equation}
\begin{equation}\label{eq:new3}
    c_1(\psi)c_2(\psi)+c_3(\psi),
\end{equation}
\begin{equation}\label{eq:new4}
    c_1(\phi_0)^2c_1(\phi_\infty)+c_1(\phi_0)c_1(\phi_\infty)^2,
\end{equation}
\begin{equation}\label{eq:new5}
    c_1(\phi_\infty)c_2(\phi_0)+c_1(\psi)c_2(\phi_0)+
    c_1(\phi_0)c_2(\phi_\infty)+c_1(\psi)c_2(\phi_\infty),
\end{equation}
\begin{equation}\label{eq:new6}
    c_1(\phi_0)^3+c_1(\psi)^3+c_2(\phi_0)c_1(\psi)+c_2(\psi)c_1(\phi_0)+c_3(\psi),
\end{equation}
    \begin{equation}\label{eq:new7}
     c_1(\phi_\infty)^3+c_1(\psi)^3+c_2(\phi_\infty)c_1(\psi)+c_2(\psi)c_1(\phi_\infty)+c_3(\psi).
\end{equation}

For better readability we denote
\[A:=\phi_0-2, V:=\psi-4, W:=f_{(0,1,0)}\otimes\psi-4,\ell:=f_{(0,1,0)}\]
Using the Whitney sum formula and relations from \thref{prop:repg2} to reduce tensor products on the right hand side
\[0=\gamma^1(A)\gamma^2(A)+\gamma^2(A)^2\in F_\gamma^4\]
and likewise
\[0=\gamma^1(B)\gamma^2(B)+\gamma^2(B)^2\in F_\gamma^4\]
which proves \eqref{eq:new1} and \eqref{eq:new2}.

Verifying the relation \eqref{eq:new3} is more involved and we will take some intermediate steps.
Using the Whitney sum formula and relations from \thref{prop:repg2} to reduce tensor products on the right hand side, we get the equations
\begin{equation}\label{eq:gam1}
    2(\gamma^1(V)-\gamma^1(W))=\gamma^3(V)-\gamma^3(W)\in \Gamma^3,
\end{equation}
\begin{equation}\label{eq:gam2}
    -\gamma^1(\ell)-\gamma^1(W)-\gamma^2(V)=2\gamma^4(V)+2(\gamma^1(V)-\gamma^1(W))\in \Gamma^3,
\end{equation}
\begin{equation}\label{eq:gam3}
    \gamma^1(V)-\gamma^1(W)=\gamma^2(W)-\gamma^2(V)-2(\gamma^1(V)-\gamma^1(W))\in \Gamma^2,
\end{equation}
\begin{equation}\label{eq:gam4}
    2\gamma^1(V)=-2\gamma^4(V)+2\gamma^4(V)-\gamma^1(V)+\gamma^1(W)\in \Gamma^2.
\end{equation}

Using the Whitney sum formula and relations from \thref{prop:repg2} to reduce tensor products on both sides we get
\begin{align*}
    -\gamma^1(V)\gamma^2(V)+\gamma^3(V)=&\gamma^1(\ell)(-\gamma^1(\ell)-\gamma^1(W)-\gamma^2(V))\\
    -&\gamma^2(V)^2+2\gamma^4(V)+\gamma^1(\ell)^4\\
    +&(\gamma^1(V)+\gamma^1(W))\gamma^2(V).
\end{align*}
The right hand side of the equation is an element of \(\Gamma^4\)
because it is the sum of appropriate multiples of the left hand sides of
\eqref{eq:gam1}, \eqref{eq:gam2}, \eqref{eq:gam3} and \eqref{eq:gam4}.
This confirms the relation \eqref{eq:new3}.

To prove the relation \eqref{eq:new4} use the Whitney sum formula and relations from \thref{prop:repg2} to reduce tensor products on the right hand side
\begin{gather*}
    \gamma^1(A)=\gamma^1(f_{(1,1,0)}-1)-\gamma^2(A),\\
    \gamma^1(B)=\gamma^1(f_{(0,1,1)}-1)-\gamma^2(B),\\
    -2\gamma^1(f_{(1,1,0)}-1)=\gamma^1(f_{(1,1,0)}-1)^2,\\
    -2\gamma^1(f_{(0,1,1)}-1)=\gamma^1(f_{(0,1,1)}-1)^2,
\end{gather*}
and from this deduce modulo \(F^4_\gamma\)
\begin{align*}
    \gamma^1(A)\gamma^1(B)^2+\gamma^1(A)^2\gamma^1(B)
    &\equiv \gamma^1(f_{(1,1,0)}-1)\gamma^1(f_{(0,1,1)}-1)^2
    +\gamma^1(f_{(1,1,0)}-1)^2\gamma^1(f_{(0,1,1)}-1)\\
    &\equiv -4\gamma^1(f_{(1,1,0)}-1)\gamma^1(f_{(0,1,1)}-1)\\
    &\equiv -\gamma^1(f_{(1,1,0)}-1)^3\gamma^1(f_{(0,1,1)}-1)\equiv 0\mod{F^4_\gamma}.
\end{align*}

The relation \eqref{eq:new5} will be verified using universal polynomials in the sense of \thref{prop:basicpoly}.
The exterior power \eqref{eq:lpsip} together with the action of the \(f_{(a,b,c)}\) on irreducible degree 2 representations
gives us
\[\lambda^2(\psi)=\phi'_0+\phi'_1+\phi'_\infty\textrm{ where }\phi'_0:=f_{(1,1,0)}\otimes\phi_0,\phi'_1:=f_{(1,1,0)}\otimes\phi_1, \phi'_\infty:=f_{(0,1,1)}\otimes\phi_\infty\]
which implies via the universal polynomial for the third Chern class of a second exterior power on the left hand side
and the Whitney sum formula together with the universal polynomials for Chern classes of tensor products on the right hand side
\begin{align*}
    c_1(\psi)^3&=c_2(\phi'_0)c_1(\phi'_\infty)+c_2(\phi'_0)c_1(\phi'_1)+c_2(\phi'_1)c_1(\phi'_0)\\
    &+c_2(\phi'_1)c_1(\phi'_\infty)+c_2(\phi'_\infty)c_1(\phi'_0)
    +c_2(\phi'_\infty)c_1(\phi'_1)+c_1(\phi'_0)c_1(\phi'_1)c_1(\phi'_\infty).
\end{align*}
If we plug into this identity the facts coming from universal polynomials of tensor products
\[c_1(f_{(a,b,c)}\otimes\phi_i)=c_1(\phi_i),
c_2(f_{(a,b,c)}\otimes\phi_i)=c_2(\phi_i)+c_1(\phi_i)c_1(f_{(a,b,c)})+c_1(f_{(a,b,c)})^2\]
for \(a,b,c\in\mathbb{F}_2\) and \(i\in\{0,1,\infty\}\) together with the relations from \thref{lem:chgdeg1}
\[c_1(\phi_0)=c_1(f_{(1,1,0)}), c_1(\phi_\infty)=c_1(f_{(0,1,1)})\] we can express the right hand side of the equation as
\begin{align*}
    c_1(\psi)^3&=c_2(\phi_0)c_1(\phi_\infty)+c_1(\phi_0)c_1(\phi_\infty)^2\\
    &+c_2(\phi_0)c_1(\phi_1)+c_1(\phi_0)c_1(\phi_1)c_1(\phi_\infty)\\
    &+c_2(\phi_1)c_1(\phi_0)+c_1(\phi_1)c_1(\phi_0)^2\\
    &+c_2(\phi_1)c_1(\phi_\infty)+c_1(\phi_1)c_1(\phi_0)^2\\
    &+c_2(\phi_\infty)c_1(\phi_0)+c_1(\phi_\infty)c_1(\phi_0)^2\\
    &+c_2(\phi_\infty)c_1(\phi_1)+c_1(\phi_0)c_1(\phi_1)c_1(\phi_\infty)\\
    &+c_1(\phi_0)c_1(\phi_1)c_1(\phi_\infty).
\end{align*}
We can expand the right hand side of the equation using the relation
\begin{align*}
        c_2(\phi_1)&=3c_1(\psi)^2+2c_2(\psi)-c_2(\phi_0)-c_1(f_{(0,0,1)})c_1(\phi_0)-c_2(\phi_\infty)-c_1(f_{(1,0,0)})c_1(\phi_\infty)\\&-c_1(\phi_0)c_1(\phi_1)-c_1(\phi_0)c_1(\phi_\infty)-c_1(\phi_1)c_1(\phi_\infty) - c_1(\phi_1)c_1(f_{(0,0,1)})
\end{align*}
from \thref{lem:chgdeg2} to get rid of all factors \(c_2(\phi_1)\), apply
\begin{align*}
    c_1(\phi_1)&=c_1(\phi_0)+c_1(\phi_\infty)+c_1(\psi),\\
    c_1(f_{(1,0,0)})&= c_1(\phi_0)+c_1(\psi)\\
    c_1(f_{(0,0,1)})&= c_1(\phi_\infty)+c_1(\psi)\\
\end{align*}
from \thref{lem:chgdeg1} to get rid of all factors \(c_1(\phi_1), c_1(f_{(1,0,0)}), c_1(f_{(0,0,1)})\)
and then using the already known relations and 2-torsion
\begin{align*}
    c_1(\phi_1)&=c_1(\phi_0)+c_1(\phi_\infty)+c_1(\psi)\text{ from \thref{lem:chgdeg1}},\\
    c_1(\phi_0)^2c_1(\phi_\infty)&=c_1(\phi_0)c_1(\phi_\infty)^2 \text{ relation \eqref{eq:new4}},\\
    c_1(\phi_0)c_1(\psi)&=c_1(\phi_\infty)c_1(\psi)=c_1(\psi)^2\text{ from \thref{lem:chgdeg2}}
\end{align*}
we can reduce this to the relation \eqref{eq:new5}.

For relation \eqref{eq:new6} we can use the Whitney sum formula and relations from \thref{prop:repg2} to reduce tensor products
on the right hand side to compute
\begin{align*}
    \gamma^1(A)+\gamma^1(V)&=\gamma^1(f_{(1,1,0)}-1)-\gamma^1(\ell)\\&-\gamma^2(A)-\gamma^2(W)+\gamma^4(V)
    +2(\gamma^1(V)-\gamma^1(W))\\
    &\equiv \gamma^1(f_{(1,1,0)}-1)-\gamma^1(\ell)\mod{F_\gamma^2},
    \end{align*}
and recall from \eqref{eq:gam1} and \eqref{eq:gam2}
\begin{gather*}
    \gamma^2(V)\equiv -\gamma^1(\ell)-\gamma^1(W) \mod(F_\gamma^3).\\
\end{gather*}
Thus we get
\begin{gather*}
    (\gamma^1(A)+\gamma^1(V))(\gamma^1(A)^2+\gamma^2(V)+\gamma^2(A))\equiv\\
    (\gamma^1(f_{(1,1,0)}-1)-\gamma^1(\ell))
    (\gamma^1(A)^2+\gamma^2(A)-\gamma^1(\ell)-\gamma^1(W) )\mod{F_\gamma^4}
\end{gather*}
but the right hand side of the congruence is equal
to \[\gamma^4(\ell)-\gamma^4(f_{(1,1,0)})-\gamma^1(\ell)^2(\gamma^1(\ell)+\gamma^1(f_{(1,1,0)}-1))\in F_\gamma^4\]
where we keep in mind that \[\gamma^1(\ell)(\gamma^1(\ell)+\gamma^1(f_{(1,1,0)}-1))\in F_\gamma^3\]
because \(c_1(\psi)(c_1(\psi)+c_1(\phi_0))\)
vanishes in \(gr^2_\gamma\) by \thref{lem:chgdeg2}.
We conclude that 
\[(c_1(\phi_0)+c_1(\psi))(c_1(\phi_0)^2+c_2(\phi_0)+c_2(\psi))\]
vanishes in \(gr_\gamma^3R(G)\)
and then use the relations we already computed
to obtain relation \eqref{eq:new6}. The final relation \eqref{eq:new7} holds analogously.

We can conclude that the kernel of the map \(A^3\to gr^3_\gamma R(G)\) contains as a linear subspace the seven-dimensional vector space spanned by the rows of the matrix
\[
\begin{pmatrix}
    0&1&0&0&0&0&0&0&0&0&0&0&0&0&0\\
    0&0&0&0&0&1&0&0&0&0&0&0&0&0&0\\
    1&0&0&0&0&0&0&0&0&1&0&0&0&0&0\\
    0&0&0&0&0&0&0&0&0&0&0&0&1&1&0\\
    0&0&1&1&1&0&1&0&0&0&0&0&0&0&0\\
    1&0&0&1&0&0&0&1&0&0&1&0&0&0&1\\
    1&0&0&0&0&0&1&0&1&0&0&0&0&1&1
\end{pmatrix}
\]
and intersecting with the kernel of 
\[A^3\to \coprod_{H'\in\{H_0,H_\infty,I_0,I_\infty,C_2^{2,G}\}} CH^3(BH')\]
yields the three-dimensional vector space spanned by the rows of the matrix
\[
\begin{pmatrix}
    0&0&1&1&1&0&1&0&0&0&0&0&0&0&0\\
    0&1&0&1&0&1&1&1&1&0&1&0&0&1&0\\
    0&0&0&0&0&1&1&0&1&1&0&1&1&1&1
\end{pmatrix}.
\]
This means the kernel of the map \(CH^3(BG)\to gr^3_{\mathrm{geom}}\) is at least of dimension 4 while it is at most of dimension 4 as it is the image of a map \[\mathbb{Z}/2\mathbb{Z}^4\cong H^3(BG,\mathbb{Z})\to CH^3(BG).\]
In particular the surjection \(gr^3_{\gamma}R(G)\to gr^3_{\mathrm{geom}}\) must be injective as otherwise
the kernel of \(CH^3(BG)\to gr^3_{\mathrm{geom}}\) has dimension larger than 4.

Thus the images of the generators of \(A^3\) in \(CH^3(BG)\) are subject to relations
\begin{gather*}
    c_1(\phi_0)c_2(\phi_\infty)+c_1(\phi_0)c_2(\psi)+
    c_1(\phi_\infty)c_2(\phi_0)+c_1(\phi_\infty)c_2(\psi),
\end{gather*}
\begin{gather*}
    c_1(\phi_0)c_2(\phi_0)+c_1(\psi)c_2(\phi_0)+c_1(\phi_\infty)c_2(\phi_\infty)+c_1(\psi)c_2(\phi_\infty)\\
    +c_1(\phi_0)c_2(\psi)+c_1(\phi_\infty)c_2(\psi)+c_1(\phi_0)^3+c_1(\phi_\infty)^3,
\end{gather*}
\begin{gather*}
    c_1(\phi_\infty)c_2(\phi_\infty)+c_1(\psi)c_2(\phi_\infty)+c_1(\phi_\infty)c_2(\psi)+c_1(\psi)c_2(\psi)\\
    +c_1(\phi_0)^2c_1(\phi_\infty)+c_1(\phi_0)c_1(\phi_\infty)^2+c_1(\phi_\infty)^3+c_1(\psi)^3
\end{gather*}
but as the kernel of 
\[A^3\to \coprod_{H'\in\{H_0,H_\infty,I_0,I_\infty,C_2^{2,G}\}} CH^3(BH')\]
has more generators there might be more degree 3 relations.
Luckily the remaining generators of this kernel
do not vanish under the cycle class map as described in the previous section which means the three mentioned relations are the only degree three relations
governing the generators of \(A^3\) as elements of \(CH^3(BG)\).

Using the map given in \autoref{subsec:cycleg2} we can now check that the modulo 2 cycle class map is injective on \(CH^3(BG)\cong CH^3(BG)/2\).
\end{proof}

We can finally prove \thref{thm:inj} bringing us very close to a full description of \(CH^*(BG)/2\).
\begin{proof}[Proof of \thref{thm:inj}]
    \thref{prop:deg1} gives us injectivity in degree 1.
    \thref{lem:chgdeg2}, \thref{lem:chgdeg3} give us injectivity in degree 2 and 3.
    The centralizers of non-central abelian subgroups of \(G\) are copies of \(L\) as defined in \autoref{sec:chowl}, direct products \((C_2\times \Syl_2(\GL(3,2))\) or elementary abelian as can be checked using \cite{GAP4}. Because all Chow rings of
    centralizers of non-central elementary abelian subgroups do not contain any nilpotents, \thref{Lem:detect2} gives us injectivity in all remaining degrees.
\end{proof}

\subsection{Determining a generating set}
Next up we would like to define an \(A^n\) in any degree and just take kernel of the map \(A^n\to H^{2n}(BG,\mathbb{F}_2)\) to compute all relations but for this we need to get a hold on the generators of \(CH^*(BG)/2\).

Using the modulo 2 cycle class map we can already make useful statement for degrees 4, 5, 6 of \(CH^*(BG)/2\)
\begin{Prop}\thlabel{Prop:deg456}
    If the cycle class map modulo 2 for \(G\) is injective, then the subring of the Chow ring \(CH^*(BG)/2\)
    generated by Chern classes of \(\phi_0,\phi_\infty\) and \(\psi\) contains all degree 4,5 and 6 elements 
    of \(CH^*(BG)/2\).
\end{Prop}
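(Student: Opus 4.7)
The plan is to combine the regularity bound \thref{kor:reg} with the already-established results \thref{lem:chgdeg1}, \thref{lem:chgdeg2}, \thref{lem:chgdeg3}, and with the hypothesized injectivity of the cycle class map modulo 2.

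First, I would apply \thref{kor:reg} with the faithful representation \(\psi\) of \(G\) of degree \(4\), whose Chern classes \(c_1(\psi), c_2(\psi), c_3(\psi), c_4(\psi)\) of degrees \(1, 2, 3, 4\) supply, via \thref{thm:faithgens}, a polynomial subring over which \(CH^*(BG)\) is of bounded-degree module generation. With \(|y_i| \in \{1,2,3,4\}\) we obtain
\[
\sigma(CH^*(BG)/2) \leq (1-1)+(2-1)+(3-1)+(4-1) = 6
\]
and hence \(\max\{|y_i|, \sigma\} = 6\). Thus \(CH^*(BG)/2\) is generated as an \(\mathbb{F}_2\)-algebra by elements of degree at most \(6\).

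Second, let \(A \subseteq CH^*(BG)/2\) denote the subring generated by Chern classes of \(\phi_0, \phi_\infty, \psi\). By \thref{lem:chgdeg1}, \thref{lem:chgdeg2} and \thref{lem:chgdeg3} we have \(CH^i(BG)/2 \subseteq A\) for \(i = 1, 2, 3\). Moreover, by \thref{kor:basicpoly} the subalgebra of \(CH^*(BG)/2\) generated by Chern classes of any representation of \(G\) coincides with the one generated by the Chern classes of the generators \(f_{(a,b,c)}, \phi_\ell, \psi\) of \(R(G)\); and the expressions for \(c_1(f_{(a,b,c)})\) (from \thref{lem:chgdeg1}) and \(c_2(\phi_1)\) (from \thref{lem:chgdeg2}) in terms of Chern classes of \(\phi_0, \phi_\infty, \psi\) show that this subalgebra is exactly \(A\).

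Third, for \(i \in \{4, 5, 6\}\), the injectivity hypothesis lets both \(A^i\) and \(CH^i(BG)/2\) be compared inside \(H^{2i}(BG, \mathbb{F}_2)\). Using the explicit cycle class map recalled in \autoref{subsec:cycleg2} together with the presentation of \(H^*(BG, \mathbb{F}_2)\) from \cite{cohom}, one computes the image of \(A^i\) in \(H^{2i}(BG, \mathbb{F}_2)\). By Steps 1 and 2, any element \(x \in CH^i(BG)/2\) that is not already in \(A^i\) would have to involve a genuine new generator of \(CH^*(BG)/2\) in degree \(4, 5,\) or \(6\); I would verify in each of the three degrees that no such additional generator is needed, using the \cite{sagemath} infrastructure of \cite{GGCR}.

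The main obstacle is this last verification. A priori there is no reason for \(CH^*(BG)/2\) to be generated by Chern classes in degrees \(4, 5, 6\), and the degree bound from Step 1 does not by itself force the generators to be of Chern-class type. Under the injectivity of the cycle class map, however, this becomes a concrete finite computation in \(H^{2i}(BG, \mathbb{F}_2)\) for \(i = 4, 5, 6\), which is best performed by computer algebra.
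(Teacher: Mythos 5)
Your setup (Steps 1 and 2) is fine, but the argument stalls exactly where the content of the proposition lies, and you acknowledge as much: you reduce to ``verify that no additional generator is needed in degrees 4, 5, 6'' and then defer this to an unspecified computer computation. That is not a reduction --- it is a restatement of the claim. What is missing is an \emph{upper bound} on where \(CH^i(BG)/2\) can land inside \(H^{2i}(BG,\mathbb{F}_2)\). Computing the image of the Chern subring \(A^i\) in cohomology tells you nothing about a hypothetical class \(x\in CH^i(BG)/2\setminus A^i\): injectivity of the cycle class map only guarantees that \(x\) has a nonzero image somewhere in \(H^{2i}(BG,\mathbb{F}_2)\), and without a constraint on that image there is no finite computation to run.

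The paper supplies precisely this constraint. By \thref{lem:elab} and \thref{kor:finab}, for every elementary abelian \(V\le G\) the mod 2 cycle class map identifies \(CH^*(BV)/2\) with the subring of squares in \(H^{2*}(BV,\mathbb{F}_2)\); by naturality of the cycle class map with respect to restriction, the image of \(CH^i(BG)/2\) in \(H^{2i}(BG,\mathbb{F}_2)\) must therefore restrict to a sum of squares on every maximal elementary abelian subgroup of \(G\). The \cite{sagemath} computation is then well posed: in cohomological degrees \(8,10,12\) one checks that the only classes restricting to sums of squares on all maximal elementary abelian subgroups are the degree \(8,10,12\) elements of the subring generated by the squares of the cohomology generators, which coincides with the image of \(A\). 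Injectivity of the cycle class map then forces \(CH^i(BG)/2=A^i\) for \(i=4,5,6\). As a side remark, your Step 1 (the regularity bound from \thref{kor:reg}) is not needed for this proposition; the paper invokes it only afterwards, in the proof of \thref{thm:chggens}, which uses \thref{Prop:deg456} as an input.
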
 
\begin{proof}
The subring of \(H^{2*}(BG,\mathbb{F}_2)\) generated by the squares
of the previously given generators coincides with the subring generated by the images of Chern classes of \(\phi_0,\phi_\infty\) and \(\psi\). Using \cite{sagemath} we can check for \(i=8,10,12\) that the only degree \(i\) elements that are mapped to sums of squares in all cohomology 
rings of maximal elementary abelian subgroups of \(G\) are degree \(i\) elements of the subring generated by squares of our generators of \(H^{2*}(BG,\mathbb{F}_2)\). 
\end{proof}
We now build on this.
\begin{Lem}\thlabel{thm:chggens}
    The ring \(CH^*(BG)\) is generated by the Chern classes
    \[c_i(\phi_0),c_i(\phi_\infty),c_j(\psi)\]
    with \(i=1,2\) and \(j=1,\dots,4\).
\end{Lem}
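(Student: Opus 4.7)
The strategy is to combine the regularity bound from Totaro's theorems with the degree-by-degree computations already carried out, then to promote a modulo 2 generating statement to an integral one via a standard torsion argument.

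First I would exploit the faithful degree 4 representation \(\psi\). By \thref{thm:faithgens} applied to \(V = \psi\), the ring \(CH^*(BG)\) is generated as a module over \(\mathbb{Z}[c_1(\psi),c_2(\psi),c_3(\psi),c_4(\psi)]\) by elements of bounded degree, and the same holds after tensoring with \(\mathbb{F}_2\) and passing to the algebraic closure. Taking \(y_i := c_i(\psi)\) of degree \(i\) in the definition of \(\sigma\) yields
\[
\sigma(CH^*(BG)/2) \leq (1-1)+(2-1)+(3-1)+(4-1) = 6.
\]
Then \thref{thm:reg} says that \(CH^*(BG)/2\) is generated as an \(\mathbb{F}_2\)-algebra by elements of degree at most \(\max\{|y_i|,\sigma\} \leq \max\{4,6\} = 6\), and \thref{kor:reg} upgrades this to the corresponding integral statement: \(CH^*(BG)\) is generated as a \(\mathbb{Z}\)-algebra by elements of degree at most \(6\).

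It then suffices to check that in each of the degrees \(1,2,3,4,5,6\), the group \(CH^i(BG)\) is already spanned by products of the Chern classes listed in the statement. Degrees 1, 2 and 3 are handled directly by \thref{lem:chgdeg1}, \thref{lem:chgdeg2} and \thref{lem:chgdeg3}, each of which gives an explicit generating set consisting solely of the classes \(c_i(\phi_0), c_i(\phi_\infty)\) with \(i=1,2\) and \(c_j(\psi)\) with \(j=1,2,3\). For degrees \(4,5,6\) modulo \(2\), the injectivity of the modulo 2 cycle class map provided by \thref{thm:inj} is exactly the hypothesis needed to invoke \thref{Prop:deg456}, which shows that the entire degree \(i\) part of \(CH^*(BG)/2\) is contained in the subring generated by Chern classes of \(\phi_0, \phi_\infty, \psi\) for \(i \in \{4,5,6\}\).

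Finally I would pass from the modulo 2 conclusion in degrees 4, 5, 6 to an integral one by a \(2\)-power torsion argument parallel to the proof of \thref{kor:reg}. Let \(A^*\) denote the \(\mathbb{Z}\)-subalgebra generated by the listed Chern classes, and pick \(x \in CH^i(BG)\) with \(1 \leq i \leq 6\). Iteratively writing \(x = a_0 + 2x_1\), \(x_1 = a_1 + 2x_2\), etc., with \(a_j \in A^i\), one obtains \(x = a_0 + 2a_1 + \cdots + 2^5 a_5 + 2^6 x_6\); since \(CH^i(BG)\) is \(|G|\)-torsion \(= 64\)-torsion by \thref{lem:chtors}, the residual term vanishes and \(x \in A^i\). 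The main obstacle in this program, the injectivity of the modulo 2 cycle class map, has already been overcome in \thref{thm:inj}, so the present lemma amounts to assembling the previous results with the regularity bound.
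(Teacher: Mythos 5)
Your proposal is correct and follows essentially the same route as the paper's proof: the degree bound of $6$ via \thref{thm:faithgens} applied to $\psi$ and \thref{thm:reg}, degrees $1$--$3$ from \thref{lem:chgdeg1}, \thref{lem:chgdeg2}, \thref{lem:chgdeg3}, and degrees $4$--$6$ from \thref{Prop:deg456} together with \thref{thm:inj}. The only difference is that you spell out the $2$-power torsion argument promoting the mod $2$ statement in degrees $4$--$6$ to the integral one, a step the paper leaves implicit.
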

\begin{proof}
\thref{lem:chgdeg1}, \thref{lem:chgdeg2} and \thref{lem:chgdeg3} already determined all the generators in degrees \(1,2\) and \(3\). We want to use
\thref{thm:reg} to show that all generators are of maximal degree 6 and for that we need that
\(CH^*(BG)/2\) is generated as a module over \(\mathbb{F}_2[c_1(\psi),\dots,c_4(\psi)]\) by a set of elements whose degrees all share some upper bound, which is proven in \thref{thm:faithgens}.
In \thref{Prop:deg456} we had already seen that the Chern classes of
\(\phi_0,\phi_\infty\) and \(\psi\) generate \(CH^i(BG)/2\) with \(i=4,5,6\) as long as the modulo 2 cycle class map is injective, which we have proven in \thref{thm:inj}. 
\end{proof}
\subsection{A presentation of \(CH^*(B(\Syl_2(\GL(4,2)))/2\)}
To conclude this section we want to actually state a presentation of \(CH^*(BG)/2\) which is obtained by simply taking the preimage of the zero ideal under the map \(A^*\to H^{2*}(BG,\mathbb{F}_2)\) given in \autoref{subsec:cycleg2}
where \(A^*\) is the free \(\mathbb{F}_2\)-algebra generated by the Chern classes of \(\phi_0,\phi_\infty,\psi\) in the appropriate degree. The calculation of this preimage was performed via \cite{sagemath}. We should mention
that the preimage computed by \cite{sagemath} was given by a generating set with many redundant generators.
We committed the redundant generators in the following list of relations.
\begin{Thm}
    The graded \(\mathbb{F}_2\)-algebra \(CH^*(BG)/2\) is generated by Chern classes
    \[c_i(A):=c_i(\phi_0),c_i(B):=c_i(\phi_\infty),c_j(V):=c_j(\psi)\] with \(i=1,2\) and \(j=1,\dots,4\)
    subject to the relations:
    \begin{itemize}
    \item in degree 2
    \[c_1(B)c_1(V)+c_1(V)^2, c_1(A)c_1(V)+c_1(V)^2,\]
    \item in degree 3
    \[c_1(B)c_2(A)+c_1(V)c_2(A)+c_1(A)c_2(B)+c_1(V)c_2(B),\]
    \begin{gather*}
        c_1(A)^2c_1(B)+c_1(A)c_1(B)^2+c_1(B)^3+c_1(V)^3\\+c_1(B)c_2(B)+c_1(V)c_2(B)+c_1(B)c_2(V)+c_1(V)c_2(V),
    \end{gather*}
    \begin{gather*}
        c_1(A)^3+c_1(B)^3+c_1(A)c_2(A)+c_1(V)c_2(A)+c_1(B)c_2(B)\\+c_1(V)c_2(B)+c_1(A)c_2(V)+c_1(B)c_2(V), 
    \end{gather*}
    \item in degree 4
    \begin{gather*}
        c_1(A)^2c_2(A)+c_1(V)^2c_2(A)+c_1(A)c_1(B)c_2(B) \\
        +c_1(B)^2c_2(B)+c_1(A)^2c_2(V)+c_1(A)c_1(B)c_2(V) \\
        +c_1(B)^2c_2(V)+c_1(V)^2c_2(V)\\
        +c_2(A)^2+c_2(A)c_2(B)+ c_2(B)^2+c_2(V)^2+c_1(V)c_3(V),
    \end{gather*}
    \begin{gather*}
        c_1(B)^4+c_1(V)^4+c_1(B)^2c_2(V)+c_1(V)^2c_2(V)+c_1(B)c_3(V)+c_1(V)c_3(V),
    \end{gather*}
    \begin{gather*}
    c_1(A)c_1(B)^3+c_1(V)^4+c_1(B)^2c_2(B)+c_1(V)^2c_2(B)+ c_1(A)^2c_2(V)\\
    +c_1(B)^2c_2(V)+c_2(A)^2+c_2(A)c_2(B)+ c_2(B)^2+c_2(V)^2+c_1(A)c_3(V) 
    \end{gather*}

\item in degree 6
\begin{gather*}
    c_1(B)^3c_3(V) + c_1(V)^3c_3(V) + c_2(A)^2c_2(B) + c_2(A)c_2(B)^2\\ 
    + c_1(A)c_2(A)c_3(V)+c_1(V)c_2(A)c_3(V) + c_1(A)c_2(B)c_3(V)+c_1(V)c_2(B)c_3(V)\\+c_1(B)c_2(V)c_3(V)+c_1(V)^2c_4(V)+c_3(V)^2. 
\end{gather*}
    \end{itemize}
\end{Thm}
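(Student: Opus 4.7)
The plan is to combine the two key reductions established earlier. First, \thref{thm:inj} tells us that the modulo 2 cycle class map
\[CH^*(BG)/2 \to H^{2*}(BG,\mathbb{F}_2)\]
is injective, so it suffices to identify its image. Second, \thref{thm:chggens} pins down the eight generators \(c_1(A), c_2(A), c_1(B), c_2(B), c_1(V), c_2(V), c_3(V), c_4(V)\). Writing \(A^*\) for the free graded-commutative \(\mathbb{F}_2\)-algebra on these generators, the theorem is equivalent to computing the kernel of the induced homomorphism
\[A^* \to H^{2*}(BG,\mathbb{F}_2)\]
and then exhibiting the listed polynomials as generators of that kernel.

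My approach is to make this map completely explicit. The presentation of \(H^*(BG,\mathbb{F}_2)\) recalled in \autoref{subsec:cycleg2} from \cite{cohom}, together with the explicit images of the eight Chern classes computed in the same subsection, turn \(A^* \to H^{2*}(BG,\mathbb{F}_2)\) into concrete ring data in a finitely presented \(\mathbb{F}_2\)-algebra. The preimage of the zero ideal can then be computed by Gr\"obner basis methods; in practice this is delegated to the \cite{sagemath} routines of \cite{GGCR}. To ensure that the search terminates at a controlled degree I would invoke \thref{thm:reg}, applied to \(CH^*(BG)/2\) with \(\sigma(CH^*(BG)/2)\) estimated using the faithful degree 4 representation \(\psi\), which guarantees that no new relations can appear above an a priori bound.

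The principal obstacle is not the computation itself but cleaning up its output. The raw generating set returned by computer algebra carries substantial redundancy, and the task is to discard generators that already follow from lower-degree ones. Organising by total degree and reducing modulo previously confirmed relations, one whittles the list down to the two relations in degree 2, three in degree 3, three in degree 4, and the single relation in degree 6 as stated. A final consistency check compares the Hilbert series of \(A^*/I\), with \(I\) the ideal generated by the listed relations, against that of the image of \(A^*\) inside \(H^{2*}(BG,\mathbb{F}_2)\); equality confirms that no further relations have been missed and completes the proof.
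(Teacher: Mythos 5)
Your proposal matches the paper's argument: the paper likewise combines the injectivity of the modulo 2 cycle class map from \thref{thm:inj} with the generating set from \thref{thm:chggens}, computes the preimage of the zero ideal under \(A^*\to H^{2*}(BG,\mathbb{F}_2)\) via \cite{sagemath}, and then strips the redundant generators from the resulting ideal. The extra safeguards you mention (the regularity bound and a Hilbert series comparison) are harmless but not needed, since the kernel of a map of finitely presented graded algebras is already computable by elimination.
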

\appendix
\section{The representation ring of \(\Syl_p(\GL(3,p))\)}\label{sec:reph}
We consider \(H:=\Syl_p(\GL(3,p))\) and compute its representation ring including exterior powers. 
Afterwards we will determine the restriction maps to representation rings of certain subgroups.
We can understand \(H\) as the group of
upper triangular matrices over \(\mathbb{F}_p\) with \(1\) as diagonal entries, or as the Heisenberg groups
over \(\mathbb{F}_p\). In the special case \(p=2\) we have \(H\cong D_8\) the dihedral group with \(8\) elements.
The representation theory of \(H\) is already well known, see for example \cite[Section 2]{heiGr}.

We can express \(H\) via a presentation.
Denote by \(E_{i,j}\) the \(3\times 3\) \(\mathbb{F}_p\)-matrix that has the same entries as the unit matrix 
except that the \((i,j\))-th entry is \(1\). Then the \(E_{i,j}\) 
with \(1\leq i < j\leq 3\) form a generating set of \(H\) by means of the identity
\[\begin{pmatrix} 1&x&z \\ 0&1&y \\ 0&0&1\end{pmatrix}=E_{2,3}^yE_{1,2}^xE_{1,3}^z.\]
This generating set gives rise to a presentation of \(H\) given by a relation
\begin{align*}    
    [E_{1,2},E_{2,3}]=E_{1,3}
\end{align*}
while all other commutators of generators vanish and every generator has order \(p\).

We will begin our computation by determining that \(H\) has \(p^2+p-1\) irreducible representations
and proceed to give that many explicit descriptions of representations of \(H\) in terms of the previous presentation. Their characters will be pairwise orthogonal and thus they have to be irreducible as
otherwise there would exist more than  \(p^2+p-1\) irreducible representations. Then
we will determine the product structure and exterior powers, which can be verified on characters.
In total this determines the representation ring \(R(H)\) including its \(\lambda\)-structure.

\subsection{Number of irreducible representations}
The set of irreducible representations of \(H\) has the same size as the set of conjugacy classes of \(H\).
We compute the number of conjugacy classes of \(H\) via Burnside's Lemma as
\[\frac{\#\{(x,y)\in H\times H\mid xy=yx\}}{\#H}.\]
To do so we observe for arbitrary \(a,b,c,x,y, z\in\mathbb{F}_p\)
\[\begin{pmatrix} 1&a&c \\ 0&1&b \\ 0&0&1\end{pmatrix}\cdot\begin{pmatrix}1&x&z\\0&1&y \\ 0&0&1 \end{pmatrix} 
=\begin{pmatrix}1&x&z\\ 0&1&y \\ 0&0&1\end{pmatrix}\cdot \begin{pmatrix} 1&a&c \\ 0&1&b \\ 0&0&1\end{pmatrix}\]
if and only if 
\[ay=xb.\]
We have thus reduced our computation to counting the solutions to the quadratic polynomial equation above where we choose \(c,z\) arbitrarily. It has \(p^2+p-1\) solutions.
 
\subsection{Explicit representations}
The (multiplicative) group of isomorphism classes of degree 1 complex representations of \(H\) is given by \(p^2\) distinct representations each of which is determined by a pair \(a,b\in\mathbb{F}_p\) as
\begin{align*}
    f_{(a,b)}\colon G&\to \mathbb{C}^\times\\
    \begin{pmatrix} 
        1&x&*\\
        0&1&y\\
        0&0&1\\
    \end{pmatrix}&\mapsto \zeta_p^{ax+by}.
\end{align*}

As \(H\) is a \(p\)-group, all irreducible representations of \(H\) are of degree a power of \(p\). To compute the degree \(p\) irreducible complex representations we use our presentation of \(H\).
Its relations verify that the following map with \(k\in\mathbb{F}_p^\times\) is a well-defined homomorphism
\begin{align*}
    \phi_{k}\colon G&\to\GL(p,\mathbb{C})\\
    E_{1,2}&\mapsto \begin{pmatrix}0 & 1\\ I_{p-1}& 0\\ \end{pmatrix} \\E_{2,3}&\mapsto \mathrm{diag}(\zeta_p^{ik} \mid i\in\mathbb{N}_{<p})
\end{align*}
where the diagonal entries in \(\mathrm{diag}(\zeta_p^{ik} \mid i\in\mathbb{N}_{<p})\)
are ordered according to the order on \(\mathbb{N}_{<p}\).
Note that \[E_{1,3}\mapsto \zeta_p^kI_p\]  where \(I_n\) is the \(n\times n\) unit matrix.

\subsection{Characters of the explicit representations}
The character of \(f_{(a,b)}\) with \(a,b\in \mathbb{F}_p\) is just \(f_{(a,b)}\) itself.

We see that the trace of \(\phi_{k}\) with  \(k\in\mathbb{F}_p^\times\) is non-zero on 
\[\begin{pmatrix} 1&x&z \\ 0&1&y \\ 0&0&1\end{pmatrix}\]
if and only if \(x,y\) are zero. In this case the character of \(\phi_{k}\) evaluates to \(p\zeta_p^{z}\). 

These \(p^2+p-1\) characters are orthogonal in the inner product vector space of class functions of \(H\)
which means they have to be irreducible as otherwise there are more than \(p^2+p-1\) irreducible representations of \(H\).
\subsection{Products and exterior powers}\label{subsec:prodh}
We will write \(n\)-fold direct sums as multiplication by \(n\) which coincides with the structure
of \(R(H)\) as a \(\mathbb{Z}\)-module. All computations in this section were performed on characters.

Inspecting the characters of tensor products we get for \(a,b\in \mathbb{F}_p\)
\[f_{(a,b)}\otimes f_{(a',b')}=f_{(a+a',b+b')}.\]
For \(i,j\in\mathbb{F}_p^\times\) with \(i\neq-j\) we get
\[\phi_i\otimes\phi_j=p\phi_{i+j}\]
and
\[\phi_i\otimes\phi_{-i}=\sum_{a,b\in\mathbb{F}_p}f_{(a,b)}.\]

Doing the same for exterior powers we get for \(k< p\)
\[\lambda^k(\phi_i)=p^{-1}\binom{p}{k}\phi_{ki}.\]
In the case that \(p\) is odd every element of \(H\)
is of order \(p\)
which lets us compute for every \(i\in\mathbb{F}_p^\times\)
\[\lambda^p(\phi_i)=f_{(0,0)}=1.\]
In the case \(p=2\) we have 
\[\lambda^2(\phi_1)=f_{(1,1)}.\]
\subsection{Restrictions to subgroups}\label{subsec:resh}
We will now examine selected subgroups of \(H\) and how the previously determined
irreducible representations restrict. Let \(nk^{-1}\in\mathbb{F}_p \cup \{\infty\}\) be a fraction, where we don't allow \(n,k\) to both be zero.
 We denote by \(C^{H,nk^{-1}}_p\) the group generated by
\[\begin{pmatrix}1&n&0\\0&1&k\\0&0&1\end{pmatrix}.\]
For odd \(p\) this group is isomorphic to \(C_p\).
For \(p=2\) it is isomorphic to \(C_2\) unless
\(nk^{-1}=1\) in which case it is isomorphic to \(C_4\). 
 For this reason we will write \(C^{H,1}_4\) instead of \(C^{H,1}_2\).
Denote by \(\sigma\) the degree 1 faithful representation of \(C^{H,nk^{-1}}_p\) which sends the distinguished
generator to \(\zeta_p\).
The representation \(\phi_k\) restricts to 
\[\sum_{i=0}^{p-1}\sigma^i \]
on all of subgroups of the form \(C^{H,nk^{-1}}_p\) (excluding \(C^{H,1}_4\)) while \(f_{(a,b)}\)
restricts to \(\sigma^{an+bk}\).

We denote by \(Z(H)\) the center of \(H\), which is generated by
\[\begin{pmatrix}1&0&1\\0&1&0\\0&0&1\end{pmatrix}.\]
It is always isomorphic to \(C_p\). If \(p=2\) the center is a subgroup of \(C^{H,1}_4\).
Denote by \(\tau\) the degree 1 faithful representation of \(Z(H)\) which sends the distinguished
generator to \(\zeta_p\).
On \(Z(H)\), the representation \(\phi_k\) restricts to \(p\tau^k\)
while all the \(f_{(a,b)}\) restrict to \(\tau^0\), the trivial representation.
\section{A description of the cycle class map modulo \(2\) for \(B\Syl_2(\GL(3,2))\)}\label{sec:cycleh}
Before doing the computation we will describe a general strategy for determining the modulo \(p\) cycle class map
from data given in \cite{cohom}. The authors of loc. cit. provide for small finite \(p\)-groups \(G\)
a presentation of \(H^*(BG,\mathbb{F}_p)\) together with restrictions to all maximal elementary abelian subgroups and the center.
On the side of mod \(p\) Chow rings it is easy to compute the restriction of Chern classes to abelian subgroups.
This gives us a commutative diagram of the form
\begin{center}
\begin{tikzcd}
    Chern^*(BG)/p\ar[r]\ar[d] & H^{2*}(BG,\mathbb{F}_p)\ar[d]\\
    \prod_i Chern^*(BA_i)/p\ar[r]& \prod_i H^{2*}(BA_i,\mathbb{F}_p)
\end{tikzcd}
\end{center}
where \(Chern^*(BG)\leq CH^*(BG)\) denotes the subring generated by Chern classes and \(A_i\leq G\) are some elementary abelian subgroups. Except for the top horizontal map, i.e. the
modulo \(p\) cycle class map for \(BG\), we have explicit descriptions of all maps in the diagram up to automorphisms of the groups \(A_i,G\). Therefore determining all possible maps \(Chern^*(BG)/p\to H^{2*}(BG,\mathbb{F}_p)\) that make the diagram commutative is just a matter of linear algebra. We will do the computation for \(B\Syl_2(\GL(3,2))\)in some detail but later computations are omitted.

A presentation of the \(\mathbb{F}_2\)-cohomology ring of \(BH\) with \(H:=\Syl_2(\GL(3,2))\) is given by \cite[Group number 3 of order 8]{cohom} as 
\[\mathbb{F}_2[b_{1,0},b_{1,1},c_{2,2}]/(b_{1,0}b_{1,1}),|b_{1,0}|=1,|b_{1,1}|=1,|c_{2,2}|=2.\]
We can identify the \(\mathbb{F}_2\)-group cohomology
of any elementary abelian group as
\[H^*(C_2^n,\mathbb{F}_2)\cong \mathbb{F}_2[c_{1,0},\dots, c_{1,n-1}]\text{ with } |c_{1,i}|=1\]
such that according to \cite[Group number 3 of order 8]{cohom} the two restriction maps to maximal elementary abelian subgroups (each isomorphic to \(C_2^2\))
map
\[b_{1,0}\mapsto 0 ,b_{1,1} \mapsto c_{1,0} , c_{2,2}\mapsto c_{1,0}c_{1,1}+c_{1,1}^2 \]
\[b_{1,0}\mapsto c_{1,0} ,b_{1,1} \mapsto 0 , c_{2,2}\mapsto c_{1,0}c_{1,1}+c_{1,1}^2 .\]
The restriction map to the center of \(H\), that is \(Z(H)\),
maps
\[b_{1,0}\mapsto 0 ,b_{1,1} \mapsto 0 , c_{2,2}\mapsto c_{1,0}.\]

We know from \thref{prop:deg1} \(CH^1(BH)\cong H^2(BH)\), \(c_{2,2}\) does not restrict to a sum of squares on elementary abelian subgroups and \(b_{1,0}^2+b_{1,1}^2\) does not restrict trivially on any elementary abelian subgroup, which means
the cycle class map maps \(c_1(f)\mapsto b_{1,0}^2\)
or \(c_1(f)\mapsto b_{1,1}^2\) while \(c_1(\phi)\mapsto b_{1,0}^2+b_{1,1}^2\). Furthermore, \(c_2(\phi)\) restricts trivially to all cyclic subgroups of order \(2\) except for the center of \(H\) which means it is mapped to \(c_{2,2}^2\). 
\section{The representation ring of some index \(p\) subgroups \(L\leq\Syl_p(\GL(4,p))\)}\label{sec:repl}
As an intermediate step towards \(G:=\Syl_p(\GL(4,p))\) we consider certain pairwise isomorphic index \(p\) subgroups \(L\leq G\) such that we can embed \(\Syl_p(\GL(3,p))\leq L\). We can take \(G\) to be the group of upper triangular \(4\times 4\) matrices over \(\mathbb{F}_p\) whose diagonal entries are all \(1\).
We will examine the centralizers of elementary abelian subgroups of \(G\) whose elements are of the form

\[\begin{pmatrix}1&0&nx&z\\ 0&1&0&kx\\0&0&1&0\\0&0&0&1\end{pmatrix}\]
for a fixed pair \(n,k\in\mathbb{F}_p\) where \(n,k\) are not both \(0\).
These centralizers, denoted by \(L_{nk^{-1}}\) consist of the matrices
\[\begin{pmatrix}1&nx&a&d\\ 0&1&b&c\\0&0&1&kx\\0&0&0&1\end{pmatrix}\]
and are groups of order \(p^5\).
Any two such groups \(L_{nk^{-1}},L_{n'k'^{-1}}\), given by \(n,k,n',k'\in\mathbb{F}_p\setminus\{0\}\) are isomorphic by identifying
\[\begin{pmatrix}1&ny&a&d\\ 0&1&b&c\\0&0&1&ky\\0&0&0&1\end{pmatrix}\mapsto
\begin{pmatrix}1&n'y&n'n^{-1}a&n'k'n^{-1}k^{-1}d\\ 0&1&b&k'k^{-1}c\\0&0&1&k'y\\0&0&0&1\end{pmatrix}.\]
Furthermore \(L_0\cong L_\infty\) by identifying
\[\begin{pmatrix}1&0&a&d\\ 0&1&b&c\\0&0&1&y\\0&0&0&1\end{pmatrix}\mapsto
\begin{pmatrix}1&y&d-ay&c-by\\ 0&1&-a&-b\\0&0&1&0\\0&0&0&1\end{pmatrix}.\]
Denote by \(E_{i,j}\) the \(4\times 4\) \(\mathbb{F}_p\)-matrix that has the same entries as the unit matrix except that the \((i,j\))-th entry is \(1\).
In the case of \(p=2\) we have another isomorphism \(L_{0}\cong L_1\) via
\[E_{1,2}\mapsto
\begin{pmatrix}1&1&1&1\\ 0&1&0&1\\0&0&1&1\\0&0&0&1\end{pmatrix},E_{2,3}\mapsto
\begin{pmatrix}1&0&1&0\\ 0&1&1&1\\0&0&1&0\\0&0&0&1\end{pmatrix}, E_{2,4}\mapsto
\begin{pmatrix}1&0&1&1\\ 0&1&1&0\\0&0&1&0\\0&0&0&1\end{pmatrix}.\]
Thus, in the case of \(p=2\), all \(L_{nk^{-1}}\) are isomorphic.

We will now compute the representation ring of \(L_0\), which we will denote by \(L\), and thus determine for \(p=2\)
all representation rings of the \(L_{nk^{-1}}\) up to isomorphism. We will work similarly to \autoref{sec:reph}
by first computing the number of irreducible representations and then explicitly giving some (but not yet all) representations that will later seen to be irreducible.
We compute their characters which turn out to be orthogonal in the inner product space of class functions of \(L\).
As our next step we describe the tensor products of our representations and we find the remaining irreducible representations
as tensor products. As before we can check that these representations are all our irreducible representations
because their characters are orthogonal and it is the correct number of representations.
The exterior powers are also determined.
Lastly we describe restriction maps to representation rings of certain subgroups of \(L\).
\subsection{Number of irreducible representations}
As before, we compute the number of conjugacy classes as
\[\frac{\#\{(x,y)\in L\times L\mid xy=yx\}}{\#L}\]
via Burnside's Lemma for the conjugation action of \(L\) on itself.
To do so we observe for arbitrary \(a,b,c,d,e,v,w,x,y, z\in\mathbb{F}_p\)
\[\begin{pmatrix} 1&0&a&e \\ 0&1&b&d \\ 0&0&1&c \\ 0&0&0&1\end{pmatrix}\cdot\begin{pmatrix}1&0&v&z\\0&1&w&y \\ 0&0&1&x\\ 0&0&0&1 \end{pmatrix} =\begin{pmatrix}1&0&v&z\\0&1&w&y \\ 0&0&1&x\\ 0&0&0&1 \end{pmatrix}\cdot \begin{pmatrix} 1&0&a&e \\ 0&1&b&d \\ 0&0&1&c \\ 0&0&0&1\end{pmatrix}\]
if and only if 
\[ax=vc \text{ and } bx=wc.\]
We have thus reduced our computation to counting the solutions to the system of quadratic polynomial equations above where we choose \(d,e,y,z\) arbitrarily. 

It has \(p^4(p-1)^4\) solutions
where we require both sides of both equations to be non-zero given by an arbitrary choice of \(a,b,x,v\in\mathbb{F}_p^\times\). 

It has \(p^4(p-1)^3\)
solutions if we require the first equation to be zero on both sides and the second equation to be non-zero
because then \(a=v=0\) and every solution is determined by an arbitrary choice of \(b,x,c\in\mathbb{F}_p^\times\).

The case where we require the second equation to be non-zero and the first to be zero likewise has 
\(p^4(p-1)^3\)solutions.

The last case is that both sides of both equations are zero, which requires a distinction
of the cases for \(c,x\) being zero or non-zero. If \(c=x=0\) there are \(p^8\) solutions determined by an arbitrary choice of \(a,b,v,w\in\mathbb{F}_p\). If \(c=0, x\neq 0\) we have \(a=b=0\) and there are \(p^6(p-1)\) solutions given by arbitrary \(x\in\mathbb{F}_p^\times\) and arbitrary \(v,w\in\mathbb{F}_p\). Likewise there are \(p^6(p-1)\) solutions for \(c\neq 0, x= 0\).
Lastly there are \(p^4(p-1)^2\) solutions for \(c\neq 0, x\neq 0\) given by arbitrary \(x,c\in \mathbb{F}_p^\times\)
while \(a=b=v=w=0\).

This yields a total of \(p^4(p^4+(p-1)^2+2p^2(p-1))=p^5(2p^3-p)\)
solutions. Dividing by \(\#L=p^5\) tells us that \(L\) has \(2p^3-p\) conjugacy classes.
\subsection{Explicit representations}
 We will now provide a list of \(p^3\) many degree 1 complex representations of \(L\) and \(p^2-p\) many
 degree \(p\) representations. We will see later that these generate \(R(L)\) as a \(\mathbb{Z}\)-algebra.
 
The (multiplicative) group of isomorphism classes of degree 1 complex representations of \(L\) is given by \(p^3\) many pairwise non-isomorphic representations determined by \(a,b,c\in\mathbb{F}_p\) as
\begin{align*}
    f_{(a,b,c)}\colon G&\to \mathbb{C}^\times\\
    \begin{pmatrix} 
        1&0&x&*\\
        0&1&y&*\\
        0&0&1&z\\
        0&0&0&1\\
    \end{pmatrix}&\mapsto \zeta_p^{ax+by+cz}.
\end{align*} 

Next we find normal subgroups consisting of matrices
\(N_{n,k}\triangleleft L\)
\[\begin{pmatrix}1&0&nx&ny\\0&1&kx&ky\\0&0&1&0\\0&0&0&1\end{pmatrix}\]
such that \(n,k\in\mathbb{F}_p\) are not both 0, which means \(L/N_{n,k}\cong H\) via the map
\[\begin{pmatrix}1&0&a&e\\0&1&b&d\\0&0&1&c\\0&0&0&1\end{pmatrix}\mapsto \begin{pmatrix}1&a-nk^{-1}b&e-nk^{-1}d\\0&1&c\\0&0&1\end{pmatrix}\]
for \(k\neq 0\) or if \(k=0\)
\[\begin{pmatrix}1&0&a&e\\0&1&b&d\\0&0&1&c\\0&0&0&1\end{pmatrix}\mapsto \begin{pmatrix}1&-b&-d\\0&1&c\\0&0&1\end{pmatrix}.\]
Then we define degree \(p\) representations
\(\phi_{nk^{-1},i}\) as the inflation of the degree \(p\) representation \(\phi_i\) of \(H\)
along the projection \(L\to L/N_{n,k}\cong H\), as given by the previous map.

\subsection{Characters of the explicit representations}
The character of \(f_{(a,b,c)}\) with \(a,b,c\in \mathbb{F}_p\) is just \(f_{(a,b,c)}\) itself.

The character of \(\phi_{nk^{-1},i}\) maps
the matrix
 \[\begin{pmatrix}1&0&a&e\\ 0&1&b&d\\0&0&1&c\\0&0&0&1\end{pmatrix}\]
 to zero unless \(c=0\) and \(a=ny,b=ky\) for some \(y\in\mathbb{F}_p\). In this case
 it is mapped to \(p\zeta_p^{ie}\) if \(n=0\),
 to \(p\zeta_p^{-id}\) if \(k=0\) and else to
 \(p\zeta_p^{i(e-nk^{-1}d)}\).
 
 \subsection{Products and exterior powers}\label{subsec:prodl}
 Inspecting the characters of tensor products we get for all \(a,b,c\in \mathbb{F}_p\)\[f_{(a,b,c)}\otimes f_{(a',b',c')}=f_{(a+a',b+b',c+c')}.\]

The multiplicative group of degree 1 representations
acts on degree \(p\) representations via tensor multiplication
and \(f_{(a,b,c)}\phi_{nk^{-1},i}=\phi_{nk^{-1},i}\)
if and only if there exists a \(y\in\mathbb{F}_p\) with \(a=ky, b=ny\).

In fact, checking the characters we can determine that the elements of the orbits
of the \(\phi_{nk^{-1},i}\) under this action are pairwise non-isomorphic and there are \(p^3-p\) such elements.
Even better the \(f_{(a,b,c)}\) and the elements of the orbits of \(\phi_{nk^{-1},i}\) under the multiplication
action by the group of \(f_{(a,b,c)}\) have pairwise orthogonal characters. We have therefore determined \(2p^3-p\)
representations whose characters are pairwise orthogonal and as there are only \(2p^3-p\) irreducible representations of \(L\) these have to be all the irreducible representations.

By our computation in \autoref{subsec:prodh} we have
 \begin{align*}
     \phi_{nk^{-1},i}\otimes\phi_{nk^{-1},i'}=p\phi_{nk^{-1},i+i'},\\
     \phi_{nk^{-1},i}\otimes\phi_{nk^{-1},-i}=\sum_{x,y\in\mathbb{F}_p} f_{kx,nx,y},
 \end{align*}
 and
 \begin{align*}
     \lambda^j(\phi_{nk^{-1},i})=p^{-1}\binom{p}{j} \phi_{nk^{-1},ji},\\
     \lambda^p(\phi_{nk^{-1},i})=\begin{cases}f_{(0,0,0)} \text{ if } p\neq 2 \\ f_{k,n,1}\text{ else }\end{cases}.
 \end{align*}
 Lastly, by checking the characters of the representations in question,
 \begin{align*}
     \phi_{nk^{-1},i}\otimes\phi_{n'k^{-1},i'}&=\sum_{x\in\mathbb{F}_p}f_{(k(i+i')x,(n+n')x,0)}\phi_{(n+n')k^{-1}(i+i')^{-1},(i+i')},\\
     \phi_{nk^{-1},i}\otimes\phi_{n'k^{-1},-i}&=\sum_{x\in\mathbb{F}_p}f_{(0,x,0)}\phi_{\infty,i+nk^{-1}+i'n'k^{-1}},\\
     \phi_{nk^{-1},i}\otimes\phi_{\infty,i'}&=\sum_{x\in\mathbb{F}_p}f_{(ix,(nk^{-1}+i')x,0)}\phi_{(nk^{-1}+i')i^{-1},i}.
     \end{align*}
 \subsection{Restrictions to subgroups}\label{subsec:resl}
 The largest maximal abelian subgroup of \(L\) is \(C_p^{4,L}\cong C_p^4\) whose elements are matrices of the form
 \begin{align*}
    \begin{pmatrix} 
        1&0&x_1&x_3\\
        0&1&x_2&x_4\\
        0&0&1&0\\
        0&0&0&1\\
    \end{pmatrix}.
\end{align*} 
    Denote by \(\sigma_i\) the representation \[\sigma_i\colon C_p^n \to \mathbb{C}^\times\] that projects
 \(C_p^n\) to its \(i\)-th entry which we identify with the cyclic group generated by \(\zeta_p\). On this subgroup the restriction maps 
\[f_{(a,b,c)}\mapsto\sigma_1^a\sigma_2^b,\] for \(k\neq 0\) it maps \[\phi_{nk^{-1},i}\mapsto\sum^{p-1}_{j=0}(\sigma_1^k\sigma_2^{-n})^j\sigma_3^i\sigma_4^{-ink^{-1}}\] and lastly \[\phi_{\infty,i}\mapsto\sum_{j=0}^{p-1}\sigma_2^j\sigma_4^{-i}.\]

Another maximal abelian subgroup is \(L\) is \(C_p^{3,L}\cong C_p^3\) whose elements are matrices of the form
 \begin{align*}
    \begin{pmatrix} 
        1&0&0&x_3\\
        0&1&0&x_2\\
        0&0&1&x_1\\
        0&0&0&1\\
    \end{pmatrix}.
\end{align*} 
On this subgroup the restriction maps 
\[f_{(a,b,c)}\mapsto\sigma_1^c,\] for \(k\neq 0\) it maps \[\phi_{nk^{-1},i}\mapsto\sum^{p-1}_{j=0}\sigma_1^j\sigma_2^{-ink^{-1}}\sigma_3^i\] and lastly \[\phi_{\infty,i}\mapsto\sum_{j=0}^{p-1}\sigma_1^j\sigma_2^{-i}.\]

 The center of \(L\) is \(Z(L)\cong C_P^2\) whose elements are matrices of the form
  \begin{align*}
    \begin{pmatrix} 
        1&0&0&x_2\\
        0&1&0&x_1\\
        0&0&1&0\\
        0&0&0&1\\
    \end{pmatrix}.
\end{align*} 
On this subgroup the restriction maps 
\[f_{(a,b,c)}\mapsto 1,\] for \(k\neq 0\) it maps \[\phi_{nk^{-1},i}\mapsto p\sigma_1^{-ink^{-1}}\sigma_2^i\] and lastly \[\phi_{\infty,i}\mapsto p\sigma_1^{-i}.\]
\section{A description of the cycle class map modulo \(2\) for \(BL\)}\label{sec:cyclel}
We get a presentation of the \(\mathbb{F}_2\)-group cohomology of \(L\) from \cite[Group number 27 of order 32]{cohom}
\[H^*(BL,\mathbb{F}_2)\cong\frac{\mathbb{F}_2[b_{1,0},b_{1,1},b_{1,2},b_{2,4},c_{2,5},c_{2,6}]}{(b_{1,0}b_{1,1}, b_{1,0}b_{1,2}, b_{1,0}b_{2,4}, b_{2,4}b_{1,1}b_{1,2}+b_{2,4}^2+c_{2,6}b_{1,1}^2+c_{2,5}b_{1,2})}\]
with
\(b_{1,0}, b_{1,1}, b_{1,2}\) of degree 1 and \(b_{2,4}, c_{2,5}, c_{2,6}\) of degree 2.
Furthermore the restriction to \(C_2^{4,L}\) maps
\begin{gather*}
    b_{1,0} \mapsto 0,
    b_{1,1} \mapsto c_{1,2},
    b_{1,2} \mapsto c_{1,3},\\
    b_{2,4} \mapsto c_{1,1}c_{1,2}+c_{1,0}c_{1,3},\\
    c_{2,5} \mapsto c_{1,0}c_{1,2}+c_{1,0}^2, \\
    c_{2,6} \mapsto c_{1,1}c_{1,3}+c_{1,1}^2.\\
\end{gather*}
The restriction to \(C_2^{3,L}\) maps
\begin{gather*}
    b_{1,0} \mapsto c_{1,2},
    b_{1,1} \mapsto 0,
    b_{1,2} \mapsto 0,
    b_{2,4} \mapsto 0,\\
    c_{2,5} \mapsto c_{1,0}c_{1,2}+c_{1,0}^2, \\
    c_{2,6} \mapsto c_{1,1}c_{1,3}+c_{1,1}^2.\\
\end{gather*}
The restriction to \(Z(L)\) maps
\begin{gather*}
    b_{1,0} \mapsto 0,
    b_{1,1} \mapsto 0,
    b_{1,2} \mapsto 0,
    b_{2,4} \mapsto 0,\\
    c_{2,5} \mapsto c_{1,0}^2, \\
    c_{2,6} \mapsto c_{1,1}^2.\\
\end{gather*}

On the other hand we can apply our calculations in \autoref{subsec:resl} to determine the restriction of Chern classes
to \(C_2^{4,L}\) where we denote \(X_i:=c_1(\sigma_i)\)
\begin{gather*}
    c_1(A) \mapsto X_1,
    c_1(B) \mapsto X_1+X_2,
    c_1(C) \mapsto X_2,\\
    c_2(A) \mapsto X_1X_3+X_3^2,\\
    c_2(B) \mapsto X_1X_3+X_2X_3+X_3^2+X_1X_4+X_2X_4+X_4^2, \\
    c_2(C) \mapsto X_2X_4+X_4^2.\\
\end{gather*}
The restriction to \(C_2^{3,L}\) maps
\begin{gather*}
    c_1(A) \mapsto X_1,
    c_1(B) \mapsto X_1,
    c_1(C) \mapsto X_1,\\
    c_2(A) \mapsto X_1X_3+X_3^2,\\
    c_2(B) \mapsto X_1X_2+X_2^2+X_1X_3+X_3^2, \\
    c_2(C) \mapsto X_1X_2+X_2^2.\\
\end{gather*}
The restriction to \(Z(L)\) maps
\begin{gather*}
    c_1(A) \mapsto 0,
    c_1(B) \mapsto 0,
    c_1(C) \mapsto 0,\\
    c_2(A) \mapsto X_2^2,\\
    c_2(B) \mapsto X_1^2+X_2^2, \\
    c_2(C) \mapsto X_1^2.\\
\end{gather*}
As in \autoref{sec:cycleh}, the image of the modulo 2 cycle class map in \(H^*(C_2^n,\mathbb{F}_p)\) consists of sums of squares.
Furthermore the modulo 2 cycle class map has to be natural with respect to restriction. We know terms \(\prod_i X_i^{n_i}\) and terms \(\prod_i c_{1,i}^{n_i}\) restrict non-trivially to \(k\) many different non-trivial cyclic subgroups
where \(k\) is the number of non-zero exponents \(n_i\). There are two maps that fulfills these criteria that 
coincide up to the automorphism of \(H^*(BL,\mathbb{F}_2)\) that exchanges \(c_{2,5}\) with \(c_{2,6}\) and \(b_{1,1}\)
with \(b_{1,2}\). One of them is
\begin{gather*}
    c_1(A)\mapsto b_{1,0}^2+b_{1,1}^2, c_1(B)\mapsto b_{1,0}^2+b_{1,1}^2+b_{1,2}^2, c_1(C)\mapsto b_{1,0}^2+b_{1,2}^2\\
    c_2(A)\mapsto c_{2,5}^2, c_2(B)=c_{2,5}^2+b_{2,4}^2+c_{2,6}^2, c_2(C)\mapsto c_{2,6}^2
\end{gather*}
and the other is
\begin{gather*}
    c_1(A)\mapsto b_{1,0}^2+b_{1,2}^2, c_1(B)\mapsto b_{1,0}^2+b_{1,1}^2+b_{1,2}^2, c_1(C)\mapsto b_{1,0}^2+b_{1,1}^2\\
    c_2(A)\mapsto c_{2,6}^2, c_2(B)=c_{2,5}^2+b_{2,4}^2+c_{2,6}^2, c_2(C)\mapsto c_{2,5}^2
\end{gather*}
\section{The representation ring of \(\Syl_p(\GL(4,p))\)}\label{sec:repg}
For our computations we will need the representation ring of \(G:=\Syl_p(\GL(4,p)\). We will work similarly to \autoref{sec:reph} and \autoref{sec:repl}
by first computing the number of irreducible representations and then explicitly giving some (but not yet all) representations that will later seen to be irreducible.
We compute their characters which turn out to be orthogonal in the inner product space of class functions of \(G\).
As our next step we describe the tensor products of our representations and we find the remaining irreducible representations
as tensor products. As before we can check that these representations are all our irreducible representations
because their characters are orthogonal and it is the correct number of representations.
The exterior powers in the case of \(p=2\) are also determined.
Lastly we describe restriction maps to representation rings of certain subgroups of \(G\).
We can express \(G\) via a presentation.
 Denote by \(E_{i,j}\) the \(4\times 4\) \(\mathbb{F}_p\)-matrix that has the same entries as the unit matrix except that the \((i,j\))-th entry is \(1\). Then the \(E_{i,j}\) 
with \(1\leq i < j\leq 4\) form a generating set of \(G\) by means of the identity
\[\begin{pmatrix} 1&u&x&z \\ 0&1&v&y \\ 0&0&1&w \\ 0&0&0&1\end{pmatrix}=E_{3,4}^wE_{2,3}^vE_{2,4}^yE_{1,2}^uE_{1,3}^xE_{1,4}^z.\]

This generating set gives rise to a presentation of \(G\) given by relations
\begin{align*}    
[E_{1,2},E_{2,3}]=E_{1,3}, [E_{2,3},E_{3,4}]=E_{2,4}, [E_{1,2},E_{2,4}]=[E_{1,3},E_{3,4}]=E_{1,4}.
\end{align*}
while all other commutators of generators vanish and every generator has order \(p\).
\subsection{Number of irreducible representations}
We compute the number of conjugacy classes of \(G\) again as
\[\frac{\#\{(x,y)\in G\times G\mid xy=yx\}}{\#G}\]
via Burnside's Lemma for the conjugation action of \(G\) on itself.
To do so we observe for arbitrary \(a,\dots, f,u,\dots, z\in\mathbb{F}_p\)
\[\begin{pmatrix} 1&a&b&c \\ 0&1&d&e \\ 0&0&1&f \\ 0&0&0&1\end{pmatrix}\cdot\begin{pmatrix}1&u&v&w\\ 0&1&x&y \\ 0&0&1&z \\0&0&0&1\end{pmatrix} =\begin{pmatrix}1&u&v&w\\ 0&1&x&y \\ 0&0&1&z \\0&0&0&1\end{pmatrix}\cdot \begin{pmatrix} 1&a&b&c \\ 0&1&d&e \\ 0&0&1&f \\ 0&0&0&1\end{pmatrix}\]
if and only if 
\[ax=ud\textrm{ and } dz=xf \textrm{ and } ay+bz=ue+vf.\]
We have thus reduced our computation to counting the solutions to the system of quadratic polynomial equations above.
We will count the number of solutions by doing a case distinction that is illustrated by the tree below.
\usetikzlibrary {trees}
\begin{center}
\begin{tikzpicture}
  \node {Start}
    [edge from parent fork down, sibling distance = 18mm]
    child {node {\(a\neq 0\)}
        child {node {\(d\neq 0\)}
            child {
                child {node {\thref{case:1}}}}}
        child {node {\(d=0\)}
            child { 
                child {node {\thref{case:2}}}}}}
    child[missing] {node}
    child[missing] {node}
    child[missing] {node}
    child {node {\(a=0\)}
      child {node {\(\substack{d\neq 0\\f\neq 0}\)}
        child {
             child {node {\thref{case:3}}}}}
      child {node {\(\substack{d=0\\f\neq 0}\)}
        child {
             child {node {\thref{case:4}}}}}
      child {node {\(\substack{d\neq0\\f= 0}\)}
        child {
             child {node {\thref{case:5}}}}}
      child[missing] {node}
      child {node {\(\substack{d=0\\f=0}\)}
        child {node {\(b\neq 0\)}
            child {node {\thref{case:6}}}}
        child {node {\(\substack{b=0\\ e\neq0}\)}
            child {node {\thref{case:7}}}}
        child {node {\(\substack{b=0\\ e=0}\)}
            child {node {\thref{case:8}}}}}
    };
\end{tikzpicture}
\end{center}
\begin{Case}\thlabel{case:1}
    We get a solution by choosing \(a,d\in\mathbb{F}^\times_p\) and \(b,c,e,f,v,w,x\in\mathbb{F}_p\).
    In this case \[u=-axd^{-1}, y=(ue+fv-bz)a^{-1}\text{ and }z=-xfd^{-1}.\] There are \(p^7(p-1)^2\) many such solutions.
\end{Case}

\begin{Case}\thlabel{case:2}
    We get a solution by choosing \(a\in\mathbb{F}^\times_p\) and \(b,c,e,u,v,w,x,z\in\mathbb{F}_p\).
    In this case \[y=(ue+fv-bz)a^{-1}.\] There are \(p^8(p-1)\) many such solutions.
\end{Case}

\begin{Case}\thlabel{case:3}
    We get a solution by choosing \(d,f\in\mathbb{F}^\times_p\) and \(b,c,e,w,x,y\in\mathbb{F}_p\).
    In this case \[u=-axd^{-1}, v=(bz-ue)f^{-1}\text{ and }z=-xfd^{-1}.\] There are \(p^6(p-1)^2\) many such solutions.
\end{Case}
\begin{Case}\thlabel{case:4}
    We get a solution by choosing \(f\in\mathbb{F}^\times_p\) and \(b,c,e,u,w,y,z\in\mathbb{F}_p\).
    In this case \[x=0 \text{ and }v=(bz-ue)f^{-1}.\] There are \(p^7(p-1)\) many such solutions.
\end{Case}

\begin{Case}\thlabel{case:5}
    We get a solution by choosing \(d\in\mathbb{F}_p^\times\) and \(b,c,e,v,w,x,y\in\mathbb{F}_p\)
    In this case \[u=-axd^{-1}\text{ and }z=-xfd^{-1}.\] There are \(p^7(p-1)\) many such solutions.
\end{Case}

\begin{Case}\thlabel{case:6}
    We get a solution by choosing \(b\in\mathbb{F}_p^\times\) and \(c,e,u,v,w,x,y\in\mathbb{F}_p\)
    In this case \[z=-xfd^{-1}.\] There are \(p^7(p-1)\) many such solutions.
\end{Case}

\begin{Case}\thlabel{case:7}
    We get a solution by choosing \(e\in\mathbb{F}_p^\times\) and \(c,v,w,x,y,z\in\mathbb{F}_p\)
    In this case \[u=-axd^{-1}.\] There are \(p^6(p-1)\) many such solutions.    
\end{Case}

\begin{Case}\thlabel{case:8}
    We get a solution by choosing \(c,u,v,w,x,y,z\in\mathbb{F}_p\).
    There are \(p^7\) many such solutions.
\end{Case}

Summing up and dividing by \(p^6=\#G\) we get a total \(p(p-1)+p(p+1)(p-1)+p^3\) conjugacy classes. We will now provide a list  of complex irreducible representations of \(G\) that will later turn out to be a generating set of \(R(G)\).

\subsection{Explicit representations}\label{subsec:repg}
The (multiplicative) group of isomorphism classes of linear complex representations of \(G\) is given by \(p^3\) many pairwise non-isomorphic representations determined by \(a,b,c\in\mathbb{F}_p\) as
\begin{align*}
    f_{(a,b,c)}\colon G&\to \mathbb{C}^\times\\
    \begin{pmatrix} 
        1&x&*&*\\
        0&1&y&*\\
        0&0&1&z\\
        0&0&0&1\\
    \end{pmatrix}&\mapsto \zeta_p^{ax+by+cz}.
\end{align*} 

We define \(p+1\) normal subgroups of \(G\) of order \(p^3\) which are given by fractions of the form \(nk^{-1}\in\mathbb{F}_p\cup\{\infty\}\) with \(n,k\in\mathbb{F}_p\), where we always suppose that \(n\) and \(k \) are not both 0, defined as

\[N_{nk^{-1}}:=\left\{\begin{pmatrix}1&nx&ny&z\\0&1&0&-ky\\0&0&1&kx\\0&0&0&1\end{pmatrix}\middle| x,y,z\in\mathbb{F}_p\right\}.\]
These give rise to splitting exact sequences
\begin{center}
\begin{tikzcd}
    0\arrow[r]&N_{nk^{-1}}\arrow[r]&
    G\arrow[r, bend right]&\Syl_p(\GL(3,p))\arrow[r]\arrow[l,bend right]&0
\end{tikzcd}
\end{center}
where we identify \(\Syl_p(\GL(3,p))\)  with subgroups \(H_{0}\) for \(nk^{-1}\neq 0\) and \(H_{\infty}\) for \(nk^{-1}\neq \infty\) defined as
\[H_{\infty}:=\left\{\begin{pmatrix}1&x&y&0\\0&1&z&0\\0&0&1&0\\0&0&0&1\end{pmatrix}\middle| x,y,z\in\mathbb{F}_p\right\}, H_{0}:=\left\{\begin{pmatrix}1&0&0&0\\0&1&x&y\\0&0&1&z\\0&0&0&1\end{pmatrix}\middle| x,y,z\in\mathbb{F}_p\right\}.\]
Thus each of \(H_0,H_\infty\) gives rise to \(p-1\) isomorphism classes of irreducible representations \(G\to \GL(p,\mathbb{C})\)
by inflating the faithful irreducible representations \(\Syl_p(\GL(3,p))\to \GL(p,\mathbb{C})\), of which there are \(p-1\). 
Denote by \(\phi_{i,\ell,j}\colon G\to \GL(p,\mathbb{C})\) the irreducible representation whose kernel is \(N_{\ell}\) and that restricts to \(\phi_i\) on \(H_j\) with \(\ell\in\mathbb{F}_p\cup\{\infty\}\) and \(j\in\{0,\infty\}\setminus\{\ell\}\).
If \(\ell,\ell^{-1}\neq\infty\)
we can identify \[\phi_{j,\ell,\infty}=\phi_{j\ell,\ell,0}.\] 
This defines \((p+1)(p-1)\) many degree \(p\) irreducible complex representations of \(G\).

To compute the degree \(p^2\) irreducible complex representations we can use our presentation of \(G\)
to check that the following is a well-defined homomorphism
\begin{align*}
    \psi_{k}\colon G&\to\GL(p^2,\mathbb{C})\\
    E_{1,2}&\mapsto \mathrm{diag}(\sigma_p,\dots,\sigma_p) \\E_{2,3}&\mapsto \mathrm{diag}(\zeta_p^{ijk} \mid (i,j)\in\mathbb{N}_{<p}\times \mathbb{N}_{<p}\textrm{ lex. ordered}) \\E_{3,4}&\mapsto\begin{pmatrix}0 & I_p\\ I_{p^2-p}& 0\end{pmatrix}
\end{align*}

where \(I_n\) is the \(n\times n\) identity matrix and \(\sigma_p\) is the \(p\times p\) matrix
\[\sigma_p:=\begin{pmatrix}0&1\\ I_{p-1}& 0\\ \end{pmatrix}.\]
Note that \[E_{1,3}\mapsto \mathrm{diag}(\zeta_p^{jk} \mid (i,j)\in\mathbb{N}_{< p}\times \mathbb{N}_{< p}\textrm{ lex. ordered})\] and
\[E_{2,4}\mapsto \mathrm{diag}(\zeta_p^{ik} \mid (i,j)\in\mathbb{N}_{< p}\times \mathbb{N}_{< p}\textrm{ lex. ordered})\]
and
\[E_{1,4}\mapsto \mathrm{diag}(\zeta_p^{k} \mid (i,j)\in\mathbb{N}_{< p}\times \mathbb{N}_{< p} \textrm{ lex. ordered}).\]
\subsection{Characters of the explicit representations}
The character of \(f_{(a,b,c)}\) with \(a,b,c\in \mathbb{F}_p\) is just \(f_{(a,b,c)}\) itself.

The trace of \(\phi_{i,\ell,\infty}\) with \(\ell\neq \infty\) evaluates to zero on
\[\begin{pmatrix} 1&u&x&z \\ 0&1&v&y \\ 0&0&1&w \\ 0&0&0&1\end{pmatrix}=\begin{pmatrix} 1&u-\ell w&x+\ell y&0 \\ 0&1&v&0 \\ 0&0&1&0 \\ 0&0&0&1\end{pmatrix}\cdot\begin{pmatrix} 1&\ell w&\ell y&z-uy-xw \\ 0&1&0&y \\ 0&0&1&w \\ 0&0&0&1\end{pmatrix}\]
unless \(\ell w=u\) and \(v=0\) in which case it evaluates to \(p\zeta_p^{i(x+\ell y)}\).
Similarly the trace of \(\phi_{j,\ell,0}\) with \(\ell\neq 0\) evaluates to 0 unless \(w=\ell^{-1}u\) and \(v=0\) in which case we get \(p\zeta_p^{j(\ell^{-1}x+y)}\).

We see that the trace of \(\psi_{k}\) is non-zero on 
\[\begin{pmatrix}
    1&u&x&z\\0&1&v&y\\0&0&1&w\\0&0&0&1
\end{pmatrix}\]
if and only if one of the following cases happens. The first case is that \(u,w\) are zero and \(v\) is non-zero and then the trace evaluates to \(p\zeta_p^{k(xyv^{-1}+z)}\). The second case is that all entries besides \(z\) are zero and then it evaluates to \(p^2\zeta_p^{kz}\). 
\subsection{Products and exterior powers}\label{subsec:prodg}
Inspecting the characters of tensor products we get for all \(a,b,c\in \mathbb{F}_p\) \[f_{(a,b,c)}\otimes f_{(a',b',c')}=f_{(a+a',b+b',c+c')}.\]

We can see on characters that the multiplicative group \(\langle f_{(1,0,0)},f_{(0,0,1)}\rangle\)
acts on the irreducible complex degree \(p\) representations by tensor multiplication such that each \(\phi_{i,\ell,j}\) is a representative of a different orbit, each of which is of size \(p\) and \[f_{(0,0,nx)}\otimes \phi_{i,\ell,j}=f_{(kx,0,0)}\otimes \phi_{i,\ell,j} \text{ for } x\in\mathbb{F}_2 \text{ arbitrary }\]
where \(nk^{-1}=\ell\). Furthermore \(\langle f_{(0,1,0)}\rangle\) acts trivially on the same set. 

Similarly to \autoref{subsec:prodl} checking the characters we can determine that the elements of the orbits
of the \(\phi_{i,\ell,j}\) under this action are pairwise non-isomorphic and there are \(p(p+1)(p-1)\) such elements.
Even better the \(f_{(a,b,c)}\), the \(\psi_i\) and the elements of the orbits of \(\phi_{i,\ell,j}\) under the multiplication
action by the group of \(f_{(a,b,c)}\) have pairwise orthogonal characters. 

From our computations for \(\Syl_p(\GL(3,p))\) in \autoref{subsec:prodh} we can deduce \[\lambda^l(\phi_{i,\ell,j})=\binom{p}{l}p^{-1}\phi_{li,\ell,j}\] for \(1\leq l<p\) while \(\lambda^p(\phi_{i,nk^{-1},j})\) is trivial if \(p\neq 2\) and else
\begin{equation*}
    \lambda^p(\phi_{i,nk^{-1},j})=f_{(k,1,n)}.
\end{equation*}\
Finally if \(i'\neq -i\)
\begin{equation*}
    \phi_{i,\ell,j}\otimes \phi_{i',\ell,j}=p\phi_{i+i',\ell,j}
\end{equation*} and 
\begin{equation*}
    \phi_{i,nk^{-1},j}\otimes \phi_{-i,nk^{-1},j}=\sum_{a,b\in\mathbb{F}_p}f_{ka,b,-na}
\end{equation*}
and 
\begin{equation*}
    \phi_{i,nk^{-1},0}\otimes \phi_{i',nk'^{-1},0}=\sum_{a,b\in\mathbb{F}_p}f_{(a,0,b)}\phi_{i+i',n(i+i)(ik+i'k')^{-1},0} \textrm{ for } n\neq 0,i\neq -i', k\neq k',
\end{equation*}
\begin{equation*}
    \phi_{i,nk^{-1},0}\otimes \phi_{-i,nk'^{-1},0}=\sum_{a,b\in\mathbb{F}_p}f_{(a,0,b)}\phi_{n^{-1}i(k-k'),0,\infty} \textrm{ for } n\neq 0,k\neq k',
\end{equation*}
\begin{equation*}
    \phi_{i,nk^{-1},\infty}\otimes \phi_{i',nk'^{-1},\infty}=\sum_{a,b\in\mathbb{F}_p}f_{(a,0,b)}\phi_{i+i',(in+i'n')k^{-1}(i+i')^{-1},\infty} \textrm{ for } k\neq 0,i\neq -i',n\neq n',
\end{equation*}
\begin{equation*}
    \phi_{i,nk^{-1},\infty}\otimes \phi_{-i,nk'^{-1},\infty}=\sum_{a,b\in\mathbb{F}_p}f_{(a,0,b)}\phi_{k^{-1}i(n-n'),\infty,0} \textrm{ for } k,n-n'\neq 0,
\end{equation*}
\begin{equation*}
    \phi_{i,0,\infty}\otimes \phi_{i',\infty,0}=\sum_{a,b\in\mathbb{F}_p}f_{(a,0,b)} \phi_{i,i'i^{-1},0}.
\end{equation*}

The multiplicative group \(\langle f_{(0,1,0)}\rangle\)
acts on the irreducible complex degree \(p^2\) representations by tensor multiplication such that each \(\psi_{i}\) is a representative of a different orbit, each of which is of size \(p\). This gives us \(p(p-1)\) representations of degree \(p^2\). Again their characters are orthogonal to each other
and all previously considered representations.
We have therefore determined \(p(p-1)+p(p+1)(p-1)+p^3\)
representations whose characters are pairwise orthogonal and as there are only \(p(p-1)+p(p+1)(p-1)+p^3\) irreducible representations of \(G\) these have to be all the irreducible representations.

Inspecting characters also gives us
\[\psi_k\otimes\psi_{k'}=p\psi_{k+k'}+(p-1)\left(\sum_{a\in\mathbb{F}_p
}f_{(0,a,0)}\otimes \psi_{k+k'}\right)\text{ for } k\neq -k'\]
and
\(\psi_k\otimes\psi_{-k}\) evaluates to
\begin{align*}
    \left(\sum_{a,b\in\mathbb{F}_p}f_{(a,0,b)}\right)+\left(\sum_{i\in\mathbb{F}_p^\times,a,\ell\in \mathbb{F}_p} f_{(0,0,a)}\otimes\phi_{i,\ell,\infty}\right)+\left(\sum_{i\in\mathbb{F}_p^\times, a\in\mathbb{F}_p} f_{(a,0,0)}\otimes\phi_{i,\infty,0}\right).
\end{align*}
Furthermore
\begin{equation*}
    \psi_l\otimes\phi_{i,nk^{-1},j}=\sum_{a\in\mathbb{F}_p
}f_{(0,a,0)}\otimes \psi_l.
\end{equation*}

For now we will only look at exterior powers of \(\psi_k\) in the case of \(p=2\). There we get
\begin{equation*}
    \lambda^2(\psi_1)=f_{(0,0,1)}\otimes \phi_{1,0,\infty}+f_{(0,0,1)}\otimes \phi_{1,1,\infty}+f_{(1,0,0)}\otimes \phi_{1,\infty,0},
\end{equation*}
\begin{equation*}
    \lambda^3(\psi_1)=f_{(0,1,0)}\otimes \psi_1, 
\end{equation*}
\begin{equation*}
    \lambda^4(\psi_1)=f_{(0,1,0)}.
\end{equation*}

 \subsection{Restriction to subgroups}\label{subsec:resg}
As for the previous groups we will now determine some restriction maps that we use
in our computation of \(CH^*(BG)/2\). The \(f_{(a,b,c}\) restrict in the obvious manner.

We have seen in \autoref{subsec:repg} the subgroups \(H_{\infty}\) and \(H_{0}\). The representation \(\phi_{i,nk^{-1},0}\)
restricts to \(\phi_{jnk^{-1}}\) if \(n\neq 0\) and else to \(\sum_{i=0}^p f_{(1,0)}^i\)
on \(H_{0}\). Similarly \(\phi_{j,nk^{-1},\infty}\) restricts to \(\phi_{jn^{-1}k}\) on \(H_{\infty}\)
if \(k\neq 0\) and else to \(\sum_{i=0}^p f_{(0,1)}^i\).
Furthermore \(\psi_{k}\) restricts to \(\sum_{i=0}^p f_{(1,0)}^i+\sum_{i\in\mathbb{F}^*_p}\phi_{i}\) on \(H_{\infty}\) and to \(\sum_{i=0}^p f_{(0,1)}^i+\sum_{i\in\mathbb{F}^\times_p}\phi_{i}\) on \(H_{0}\).

Next up we define the groups
\[I_{\infty}:=\left\langle\begin{pmatrix}1&x&0&y\\0&1&0&z\\0&0&1&0\\0&0&0&1\end{pmatrix}\middle| x,y,z\in\mathbb{F}_p\right\rangle, I_{0}:=\left\langle\begin{pmatrix}1&0&x&y\\0&1&0&0\\0&0&1&z\\0&0&0&1\end{pmatrix}\middle| x,y,z\in\mathbb{F}_p\right\rangle\]
which are isomorphic to \(\Syl_p(\GL(3,p))\).
The representation \(\phi_{j,nk^{-1},0}\) restricts to \(\sum_{i=0}^{p-1} f_{(1,0)}^if_{(0,1)}^{\frac{jn}{k}}\) on \(I_{\infty}\)
and if \(n\neq 0\) to \(\sum_{i=0}^{p-1} f_{(1,0)}^if_{(0,1)}^{j}\) else to \(pf_{(1,0)}^j\) on \(I_{0}\).
The representation \(\phi_{j,nk^{-1},\infty}\) restricts to \(\sum_{i=0}^{p-1} f_{(1,0)}^{\frac{jk}{n}}f_{(0,1)}^i\) on \(I_{0}\)
and if \(k\neq 0\) to \(\sum_i f_{(1,0)}^{i}f_{(0,1)}^j\) else to \(pf_{(0,1)}^j\) on \(I_{\infty}\).
The representation \(\psi_{k}\) restricts to \(p\phi_{k}\) on both \(I_{0}\) and \(I_{\infty}\).

\section{A description of the cycle class map modulo \(2\) for \(BG\)}\label{sec:cycle}
Denote in the sequel the representations form \autoref{sec:repg}
\[\phi_0:=\phi_{1,0,\infty},\phi_1:=\phi_{1,1,\infty},\phi_\infty:=\phi_{1,\infty,0},\psi:=\psi_1.\]
As described in the introduction we will want to compare relations in \(\mathbb{F}_2\)-group cohomology
of \(BG\) with those holding in \(CH^*(BG)/2\).
For this we use a presentation of \(H^*(BG,\mathbb{F}_2)\) as
the graded-commutative \(\mathbb{F}_2\)-algebra generated by elements
\(b_{1,0}, b_{1,1}, b_{1,2}\) of degree 1, \(b_{2,4}, b_{2,5}, b_{2,6}\) of degree 2, \(b_{3,11}\) of degree 3
and \(c_{4,18}\) of degree 4 subject to the relations
    \begin{gather*}
        b_{1,0}b_{1,1},
    b_{1,0}b_{1,2},\\
    b_{2,5}b_{1,2}+b_{2,4}b_{1,2},
    b_ {2,5}b_{1,1}+b_{2,4}b_{1,2},
    b_{2,6}b_{1,1}+b_{2,4}b_{1,2},\\
    b_{1,2}b_{3,11}, b_{1,1}b_{3,11},
    b_{1,0}b_{3,11}+b_{2,5}^2+b_{2,4}b_{2,6},\\
    b_{3,11}^2+b_{2,5}^2b_{2,6}+b_{2,5}^3+b_{2,4}b_{2,5}b_{2,6}+b_{2,4}b_{2,5}^2
      +c_{4,18}b_{1,0}^2
    \end{gather*}
which is taken from \cite[Group number 138 of order 64]{cohom}.
We will find a possible description of the cycle class map modulo 2 for certain Chern classes (that is unique up to one arbitrary choice),
by observing how said Chern classes restrict to maximal elementary abelian subgroups. The chosen Chern classes are \[c_1(\phi_0),c_1(\phi_\infty),c_1(\psi),c_2(\phi_0),c_2(\phi_\infty),c_2(\psi),c_3(\psi),c_4(\psi)\] and which turn out to be a generating set of \(CH^*(BG)/2\) in \thref{thm:chggens}. There are four maximal elementary abelian subgroups of rank 3 and one of rank 4.

The restriction maps to maximal elementary abelian subgroups in \(\mathbb{F}_2\)-group cohomology are given by \cite{cohom} but we will need to identify
their subgroups with concrete matrix subgroups.
For this we first choose two sets of generators of \(H^*(C_2^n,\mathbb{F}_2)\). We take the first from loc. cit., which is \(c_{1,0},\dots, c_{1,n-1}\).
This enables us to take their description of the restriction maps from \(G\) to elementary abelian subgroups of \(G\).
The other is \(X_1,\dots , X_n\) such that \(X_i^2\)
is the Chern class of the representation 
\[\sigma_i\colon C_2^n \to \mathbb{C}^\times\] that projects
a tuple in \(C_2^n\) to \(i\)-th entry
which is identified with \(C_2\cong\{\pm1\}\). Also we will denote \(n\)-fold multiples of the trivial
degree 1 representation by \(n\).

\subsection{Restriction to \(C_2^{3,G}\)}\label{subsec:ccm1}
Our list starts off with
\begin{align*}
C_2^{3,G}:=\left\{\begin{pmatrix}1&x_1&0&x_3\\0&1&0&0\\
0&0&1&x_2\\0&0&0&1\end{pmatrix} \middle| x_1,x_2,x_3\in\mathbb{F}_2 \right\}
\end{align*}
where representations restrict
\begin{gather*}
    \phi_0 \mapsto 1+\sigma_1,\\
    \phi_\infty\mapsto 1+\sigma_2,\\
    \psi \mapsto \sigma_3+\sigma_1\sigma_3+\sigma_2\sigma_3+\sigma_1\sigma_2\sigma_3
\end{gather*}
and thus Chern classes restrict
\begin{gather*}
    c_1(\phi_0)\mapsto X_1^2, c_1(\phi_\infty)\mapsto X^2_2, c_1(\psi)\mapsto 0,\\
     c_2(\phi_0)\mapsto 0, c_2(\phi_\infty)\mapsto 0,
     c_2(\psi)\mapsto X_1^4+X_1^2X_2^2+X_2^4,\\
     c_3(\psi)\mapsto X_1^4X_2^2+X_1^2X_2^4,\\
     c_4(\psi)\mapsto X_1^4X_2^2X_3^2+X_1^2X_2^4X_3^2+X_1^2X_2^2X_3^4
     +X_1^4X_3^4+X_2^4X_3^4+X_3^4.
\end{gather*}

This group can only be identified with the first group in the list of maximal elementary abelian subgroups as given by \cite[Group number 138 of order 64]{cohom}. The reason is that it is the only group in the list on which a degree 2 sum of squares restricts non-trivially to two distinct cyclic subgroups. On this group we restrict singular cocycle classes
\begin{gather*}
    b_{1,0} \mapsto 0,
    b_{1,1} \mapsto c_{1,0},
    b_{1,2} \mapsto c_{1,1},\\
    b_{2,4} \mapsto 0,
    b_{2,5} \mapsto 0, 
    b_{2,6} \mapsto 0,\\
    b_{3,11} \mapsto 0,\\
    c_{4,18} \mapsto c_{1,0}c_{1,1}c_{1,2}^2+c_{1,0}c_{1,1}^2c_{1,2}+c_{1,0}^2c_{1,2}^2+c_{1,0}^2c_{1,1}c_{1,2}
      +c_{1,0}^2c_{1,1}^2+c_{1,0}^4.
\end{gather*}
Note that we can already see that the cycle class map
has to map \(c_1(\phi_0)\mapsto b_{1,1}^2+v\) and
\(c_1(\phi_\infty)\mapsto b_{1,2}^2+v'\) where \(v,v'\)
vanish on \(C_2^{3,G}\) or the other way around.
In fact \(H^*(BG,\mathbb{F}_2)\) is symmetric in the
sense that 
\[b_{1,1}\mapsto b_{1,2},b_{1,2}\mapsto b_{1,1},b_{2,4}\mapsto b_{2,6},b_{2,6}\mapsto b_{2,4}\]
defines an automorphism. We have to make an arbitrary choice on how \(c_1(\phi_0),c_1(\phi_\infty)\) are mapped under the cycle class map. We choose \(c_1(\phi_0)\mapsto b_{1,1}^2+v\) and
\(c_1(\phi_\infty)\mapsto b_{1,2}^2+v'\) where \(v,v'\)
vanish on \(C_2^{3,G}\). 

\subsection{Restriction to \(C_2^{3,G,0}\)}
The next maximal elementary abelian subgroup is
\begin{align*}
C_2^{3,G,0}:=\left\{\begin{pmatrix}1&x_1&x_2&x_3\\0&1&0&0\\
0&0&1&0\\0&0&0&1\end{pmatrix} \middle| x_1,x_2,x_3\in\mathbb{F}_2 \right\}
\end{align*}
where representations restrict
\begin{gather*}
    \phi_0\mapsto \sigma_2+\sigma_1\sigma_2,\\
    \phi_\infty\mapsto  2,\\
    \psi \mapsto \sigma_3+\sigma_1\sigma_3+\sigma_2\sigma_3+\sigma_1\sigma_2\sigma_3
\end{gather*}
and thus Chern classes restrict
\begin{gather*}
    c_1(\phi_0)\mapsto X_1^2, c_1(\phi_\infty)\mapsto 0, c_1(\psi)\mapsto 0,\\
     c_2(\phi_0)\mapsto X_1^2X_2^2+X_2^4, c_2(\phi_\infty)\mapsto 0,
     c_2(\psi)\mapsto X_1^4+X_1^2X_2^2+X_2^4,\\
     c_3(\psi)\mapsto X_1^4X_2^2+X_1^2X_2^4,\\\
     c_4(\psi)\mapsto X_1^4X_2^2X_3^2+X_1^2X_2^4X_3^2+X_1^2X_2^2X_3^4
     +X_1^4X_3^4+X_2^4X_3^4+X_3^4.
\end{gather*}
This group can only be identified with the second group in the list of maximal elementary abelian subgroups from \cite[Group number 138 of order 64]{cohom} by our previous choice \(c_1(\phi_0)\mapsto b_{1,1}^2+v\) where \(v\) vanishes on \(C_2^{3,G}\). There we restrict singular cocyle classes
\begin{gather*}
    b_{1,0} \mapsto 0,
    b_{1,1} \mapsto c_{1,1},
    b_{1,2} \mapsto 0,\\
    b_{2,4} \mapsto c_{1,2}^2+c_{1,1}c_{1,2},
    b_{2,5} \mapsto 0,
    b_{2,6} \mapsto 0,\\
    b_{3,11} \mapsto 0,\\
    c_{4,18} \mapsto c_{1,0}c_{1,1}c_{1,2}^2+c_{1,0}c_{1,1}^2c_{1,2}+c_{1,0}^2c_{1,2}^2+c_{1,0}^2c_{1,1}c_{1,2}
      +c_{1,0}^2c_{1,1}^2+c_{1,0}^4.
\end{gather*}
\subsection{Restriction to \(C_2^{3,G,1}\)}
Our next maximal elementary abelian group of interest is
\begin{align*}
C_2^{3,G,1}:=\left\{\begin{pmatrix}1&x_1&x_2&x_3\\0&1&0&x_2\\
0&0&1&x_1\\0&0&0&1\end{pmatrix} \middle| x_1,x_2,x_3\in\mathbb{F}_2 \right\}
\end{align*}
where representations restrict
\begin{gather*}
    \phi_0\mapsto \sigma_2+\sigma_1\sigma_2,\\
    \phi_\infty\mapsto  \sigma_2+\sigma_1\sigma_2,\\
    \psi \mapsto \sigma_3+\sigma_1\sigma_3+\sigma_2\sigma_3+\sigma_1\sigma_2\sigma_3
\end{gather*}
and thus Chern classes restrict
\begin{gather*}
    c_1(\phi_0)\mapsto X_1^2, c_1(\phi_\infty)\mapsto X_1^2, c_1(\psi)\mapsto 0,\\
     c_2(\phi_0)\mapsto X_1^2X_2^2+X_2^4, c_2(\phi_\infty)\mapsto X_1^2X_2^2+X_2^4,
     c_2(\psi)\mapsto X_1^4+X_1^2X_2^2+X_2^4,\\
     c_3(\psi)\mapsto X_1^4X_2^2+X_1^2X_2^4,\\\
     c_4(\psi)\mapsto X_1^4X_2^2X_3^2+X_1^2X_2^4X_3^2+X_1^2X_2^2X_3^4
     +X_1^4X_3^4+X_2^4X_3^4+X_3^4.
\end{gather*}
This group can only be identified with the fourth group in the list of maximal elementary abelian subgroups given by \cite[Group number 138 of order 64]{cohom} There we restrict singular cocyle classes
\begin{gather*}
    b_{1,0} \mapsto 0,
    b_{1,1} \mapsto c_{1,1},
    b_{1,2} \mapsto c_{1,1},\\
    b_{2,4} \mapsto c_{1,2}^2+c_{1,1}c_{1,2},
    b_{2,5} \mapsto c_{1,2}^2+c_{1,1}c_{1,2},
    b_{2,6} \mapsto c_{1,2}^2+c_{1,1}c_{1,2},\\
    b_{3,11} \mapsto 0,\\
    c_{4,18} \mapsto c_{1,0}c_{1,1}c_{1,2}^2+c_{1,0}c_{1,1}^2c_{1,2}+c_{1,0}^2c_{1,2}^2+c_{1,0}^2c_{1,1}c_{1,2}
      +c_{1,0}^2c_{1,1}^2+c_{1,0}^4.
\end{gather*}

\subsection{Restriction to \(C_2^{3,G,\infty}\)}
The next group of interest is
\begin{align*}
C_2^{3,G,\infty}:=\left\{\begin{pmatrix}1&0&0&x_3\\0&1&0&x_2\\
0&0&1&x_1\\0&0&0&1\end{pmatrix} \middle| x_1,x_2,x_3\in\mathbb{F}_2 \right\}
\end{align*}
where representations restrict
\begin{gather*}
    \phi_0\mapsto 2,\\
    \phi_\infty\mapsto  \sigma_2+\sigma_1\sigma_2,\\
    \psi \mapsto \sigma_3+\sigma_1\sigma_3+\sigma_2\sigma_3+\sigma_1\sigma_2\sigma_3
\end{gather*}
and thus Chern classes restrict
\begin{gather*}
    c_1(\phi_0)\mapsto 0 ,c_1(\phi_\infty)\mapsto X_1^2, c_1(\psi)\mapsto 0,\\
     c_2(\phi_0)\mapsto 0, c_2(\phi_\infty)\mapsto X_1^2X_2^2+X_2^4,
     c_2(\psi)\mapsto X_1^4+X_1^2X_2^2+X_2^4,\\
     c_3(\psi)\mapsto X_1^4X_2^2+X_1^2X_2^4,\\\
     c_4(\psi)\mapsto X_1^4X_2^2X_3^2+X_1^2X_2^4X_3^2+X_1^2X_2^2X_3^4
     +X_1^4X_3^4+X_2^4X_3^4+X_3^4.
\end{gather*}
This group can only be identified with the third group in the list of maximal elementary abelian subgroups given by \cite[Group number 138 of order 64]{cohom}. There we restrict singular cocyle classes
\begin{gather*}
    b_{1,0} \mapsto 0,
    b_{1,1} \mapsto 0,
    b_{1,2} \mapsto c_{1,1},\\
    b_{2,4} \mapsto 0,
    b_{2,5} \mapsto 0,
    b_{2,6} \mapsto c_{1,2}^2+c_{1,1}c_{1,2},\\
    b_{3,11} \mapsto 0,\\
    c_{4,18} \mapsto c_{1,0}c_{1,1}c_{1,2}^2+c_{1,0}c_{1,1}^2c_{1,2}+c_{1,0}^2c_{1,2}^2+c_{1,0}^2c_{1,1}c_{1,2}
      +c_{1,0}^2c_{1,1}^2+c_{1,0}^4.
\end{gather*}

\subsection{Restriction to \(C_2^{4,G}\)}
Finally we turn our attention to
\begin{align*}
C_2^{4,G}:=\left\{\begin{pmatrix}1&0&x_2&x_4\\0&1&x_1&x_3\\
0&0&1&0\\0&0&0&1\end{pmatrix} \middle| x_1,x_2,x_3,x_4\in\mathbb{F}_2 \right\}
\end{align*}
where representations restrict
\begin{gather*}
    \phi_0\mapsto \sigma_2+\sigma_1\sigma_2\\
    \phi_\infty\mapsto  \sigma_3+\sigma_1\sigma_3\\
    \psi \mapsto \sigma_4+\sigma_2\sigma_4+\sigma_3\sigma_4+\sigma_1\sigma_2\sigma_3\sigma_4
\end{gather*}
and thus Chern classes restrict
\begin{gather*}
        c_1(\phi_0)\mapsto X_1^2, c_1(\phi_\infty)\mapsto X_1^2, c_1(\psi)\mapsto X_1^2,\\
     c_2(\phi_0)\mapsto X_1^2X_2^2+X_2^2, c_2(\phi_\infty)\mapsto X_1^2X_3^2+X_3^4,\\
     c_2(\psi)\mapsto X_1^2X_2^2+X_1^2X_3^2+X_1^2X_4^2+X_2^4+X_2^2X_3^2+X_3^4,\\
     c_3(\psi)\mapsto X_1^2X_2^2X_3^2+X_1^2X_4^4+X_2^2X_3^4+X_2^4X_3^2,\\\
     c_4(\psi)\mapsto X_1^2X_2^2X_3^2X_4^2+X_1^2X_2^4X_4^4+X_1^2X_3^2X_4^4+X_1^2X_4^6\\+X_2^4X_3^2X_4^2+X_2^4X_4^4+X_2^2X_3^2X_4^2+X_3^4X_4^4+X_4^8.
\end{gather*}
This group can only be identified with the fifth group in the list of maximal elementary abelian subgroups given by \cite[Group number 138 of order 64]{cohom}. There we restrict singular cocyle classes
\begin{gather*}
    b_{1,0} \mapsto c_{1,1},
    b_{1,1} \mapsto 0,
    b_{1,2} \mapsto 0, \\
    b_{2,4} \mapsto c_{1,2}^2+c_{1,1}c_{1,2},
    b_{2,5} \mapsto c_{1,2}c_{1,3}+c_{1,0}c_{1,1}, 
    b_{2,6} \mapsto c_{1,3}^2+c_{1,1}c_{1,3},\\
    b_{3,11} \mapsto c_{1,2}c_{1,3}^2+c_{1,2}^2c_{1,3}+c_{1,1}c_{1,2}c_{1,3}+c_{1,0}^2c_{1,1},\\
    c_{4,18} \mapsto c_{1,0}c_{1,2}c_{1,3}^2+c_{1,0}c_{1,2}^2c_{1,3}+c_{1,0}c_{1,1}c_{1,2}c_{1,3}\\
      +c_{1,0}^2c_{1,3}^2+c_{1,0}^2c_{1,2}c_{1,3}+c_{1,0}^2c_{1,2}^2+c_{1,0}^2c_{1,1}c_{1,3}
      +c_{1,0}^2c_{1,1}c_{1,2}+c_{1,0}^3c_{1,1}+c_{1,0}^4.
\end{gather*}
For the next step keep in mind that Chern classes restrict to sums of squares in the group cohomology of elementary abelian groups.
We combine our knowledge on all these restrictions to observe that the only possible way the cycle class map could map our choice of Chern classes is
\begin{gather*}
c_1(\phi_0)\mapsto b_{1,0}^2+b_{1,1}^2, c_1(\phi_\infty)\mapsto b_{1,0}^2+b_{1,2}^2, c_1(\psi)\mapsto b_{1,0}^2,\\
c_2(\phi_0)\mapsto b_{2,4}^2, c_2(\phi_\infty)\mapsto b_{2,4}^2,\\
c_2(\psi)\mapsto b_{2,4}^2+b_{2,5}^2+b_{2,6}^2+b_{1,1}^4+b_{1,1}^2b_{1,2}^2+b_{1,2}^4,\\
c_3(\psi)\mapsto b_{3,11}^2+b_{1,2}^2b_{2,6}^2+b_{1,1}^2b_{2,4}^2+b_{1,1}^2b_{1,2}^4+b_{1,1}^4b_{1,2}^2,\\
c_4(\psi)\mapsto c_{4,18}^2.
\end{gather*}
\printbibliography
\end{document}